\documentclass[12,reqno]{amsart}

\usepackage{amsfonts}
\usepackage[active]{srcltx}
\usepackage{amsmath}
\usepackage{amssymb}
\usepackage{amsthm}
\usepackage{mathrsfs}
\usepackage{mathtools}
\usepackage{bm} 
\usepackage{empheq}
\usepackage{mathptmx}

\usepackage{enumitem}
\usepackage{wasysym}
\usepackage{verbatim}
\usepackage{graphicx}
\usepackage[
bookmarks=true,         
bookmarksnumbered=true, 
colorlinks=true, pdfstartview=FitV, linkcolor=blue, citecolor=blue,
urlcolor=blue]{hyperref}
\usepackage{microtype}
\setcounter{MaxMatrixCols}{10}
\theoremstyle{plain}\newtheorem{theorem}{Theorem}[section]

\newtheorem{lemma}[theorem]{Lemma}

\newtheorem{proposition}[theorem]{Proposition}

\renewenvironment{proof}[1][Proof]{\textbf{#1.} }{\ \rule{0.5em}{0.5em} \par }
\theoremstyle{remark}

\theoremstyle{definition}
\newtheorem{remark}[theorem]{Remark}
\newtheorem{definition}[theorem]{Definition}

\mathchardef\mhyphen="2D 

\def\PP{\mathbb{P}}
\def\RR{\mathbb{R}}
\def\EE{\mathbb{E}}
\def\UU{\mathbb{U}}
\def\FF{\mathbb{F}}

\def\cA{{\mathcal A}}

\def\cC{{\mathcal C}}
 
\def\cG{{\mathcal G}}

\def\cP{{\mathcal P}}

\def\si{{\sigma}}

\def\cF{{\mathcal F}}
\def\cP{{\mathcal  P}}

\def\cL{{\mathcal L}}
\def\Om{{\Omega}}
\def\om{{\omega}}
\def\al{{\alpha}}

\def\Ga{{\Gamma}}

\def\si{{\sigma}}

\def\La{{\Lambda}}
\def\vare{{\varepsilon}}

\def\si{{\sigma}}
\def\al{{\alpha}}


\newcommand{\gga}{\Gamma}

\newcommand{\vp}{\varphi}



\def\XX{\mathbb{X}}
\def\YY{\mathbb{Y}}

\setcounter{equation}{0}

\let\Section=\section
\def\section{\setcounter{equation}{0}\Section}

\begin{document}
 
\title[Maximum Principle for  Partially Observed Fractional Stochastic Systems]{The Global Maximum Principle for Optimal Control of Partially Observed Stochastic Systems Driven by Fractional Brownian Motion}
  \author{\sc Yueyang Zheng    }
\address{School of Mathematics, Shandong University, Jinan, Shandong 250100, China, and
School of Mathematical Sciences, Fudan University, Shanghai 200433, China}
  \email{ zhengyueyang0106@163.com}
\author{ \sc   Yaozhong Hu  } 
\thanks{Y. Zheng was supported by the China Scholarship Council.}\thanks{Y.Hu was supported by an  NSERC discovery fund and a centennial  fund of University of Alberta.  
  }
\address{Department of Mathematical and  Statistical
 Sciences, University of Alberta at Edmonton,
Edmonton, Canada, T6G 2G1}
  \email{ yaozhong@ualberta.ca
}
\subjclass[2010]{60G15; 60H07; 60H10; 65C30} 
\keywords{Fractional Brownian motion, partial observation,  maximum principle, backward stochastic differential equations, Young integral, rough path integration, 
Campbell-Baker-Hausdorff-Dynkin formula. } 
\date{} 
\maketitle

\begin{abstract} In this paper we study the stochastic control problem of partially observed 
(multi-dimensional) stochastic system driven by both Brownian motions and fractional Brownian motions.
In the absence of the powerful tool of Girsanov transformation, we introduce and study 
  new stochastic processes which are used to transform the original problem to a ``classical one".
  The adjoint backward stochastic differential equations and the necessary condition 
  satisfied by the optimal control (maximum principle) are obtained. 
\end{abstract}

\maketitle

\section{Introduction and problem formulation}
Let $(\Om, \cF, \PP)$ be a complete probability space, where the expectation is denoted by $\EE$. On this probability space, we are given  two independent Brownian motions 
$W=(W_t=(W^1_t,W^2_t,\cdots,W^{k_1}_t),t\in[0,T])$ and $\tilde W=(\tilde{W}_t=(\tilde{W}^1_t,\tilde{W}^2_t,\cdots,\tilde{W}^{k_2}_t),t\in[0,T])$ on $\Om_1$
and two independent fractional Brownian motions $B =(B_t=(B^{ 1}_t,B^{ 2}_t,\cdots,B^{ m_1}_t),t\in[0,T])$ and $\tilde B=(\tilde{B}_t=(\tilde{B}^{ 1}_t,\tilde{B}^{ 2}_t,\cdots,\tilde{B}^{ m_2}_t),t\in[0,T])$ on $\Om_2$,  which are independent of the two Brownian motions,   where  $T>0$ is a fixed time horizon and $\Om=\Om_1\times\Om_2$.
For notational simplicity, we assume that the Hurst parameters   for all these fractional Brownian motions are the same, denoted  by  $H\in (1/3, 1)$ (we always assume $H\not=1/2$ since the Brownian motions are  included in the other part).  Since we are fixing $H$ throughout the paper
we omit the explicit notational dependence of $B$ and $\tilde B$ on $H$ to simplify  the notation.   
%
%

We denote  the filtrations $\mathbb{F}^W=\{\mathcal{F}_t^W=\sigma\{W_s,0\leq s\leq t\}\vee\mathcal{N},t\in[0,T]\}$, $\mathbb{F}^{\tilde{W}}=\{\mathcal{F}_t^{\tilde{W}}=\sigma\{\tilde{W}_s,0\leq s\leq t\}\vee\mathcal{N},t\in[0,T]\}$, $\mathbb{F}^B=\{\mathcal{F}_t^B=\sigma\{B_s,0\leq s\leq t\}\vee\mathcal{N},t\in[0,T]\}$, $\mathbb{F}^{\tilde{B}}=\{\mathcal{F}_t^{\tilde{B}}=\sigma\{\tilde{B}_s,0\leq s\leq t\}\vee\mathcal{N},t\in[0,T]\}$, and $\mathbb{F}=\{\mathcal{F}_t=\mathcal{F}^W_t\vee\mathcal{F}^{\tilde{W}}_t\vee\mathcal{F}^B_t\vee\mathcal{F}^{\tilde{B}}_t\vee\mathcal{N},t\in[0,T]\}$. Here, $\mathcal{N}$ denotes the set of all $ \mathbb{P}  $-null sets. 

The state equation we consider  is given  
by the following stochastic differential equation (SDE for short): 
\begin{equation}\label{x}
\left\{
\begin{aligned}
dX^u_t&=\sum_{j =1}^{m_1}A_{j }(t)X^u_tdB^{ j }_t+
\sum_{j =1}^{k_1} \sigma_j(t,X^u_t,u_t)dW_t^j+b(t,X^u_t,u_t)dt\,, \\
X^u_0&=x\in\mathbb{R}^n\,,
\end{aligned}
\right.
\end{equation}
where the stochastic integral with respect to Brownian motion is the usual It\^o integral and the stochastic integral with respect to fractional Brownian motion (fBm for short) is
the Young integral (see e.g. \cite[Section 2]{Hu13}) when $H>1/2$  and is pathwise integral via rough path theory (see  Section 2) when $H<1/2$. 

Suppose that the state process $X$ in \eqref{x} cannot be directly observed. Instead, we can observe a  functional of this process corrupted by some other noises. More precisely, we assume that the observation process
$\xi_t$ is  governed  by the following SDE 
\begin{equation}\label{xi}
\left\{
\begin{aligned}
d\xi^u_t&=h(t,X^u_t,u_t)dt+\sum_{j=1}^{m_2}C_{j }(t)\xi^u_td\tilde{B}^{ j }_t+\sum_{j=1}^{k_2} 
D_j(t)d\tilde{W}_t^j\\
\xi^u_0&=0\,.
\end{aligned}
\right.
\end{equation}  
We want to study the optimal control problem associated with  the cost functional
\begin{equation}\label{e.1.3} 
J(u )= \mathbb{E} \bigg[\Phi(X^u_T)+\int_0^Tf(t,X^u_t,u_t)dt\bigg].
\end{equation}
where $u_t$ is a $\mathcal{F}_t^\xi=\si\{\xi_s^u, 0\le s\le t\}$-adapted control process taking values in a nonempty set $U\subseteq\RR^d$, $t\in[0,T]$ and the coefficients in the above  equations 
are  now explained below by  the following assumptions. 
 
 \begin{enumerate}
 \item[{\bf (H1)}] $A_1, \cdots, A_{m_1} :[0,T]\rightarrow\mathbb{R}^{n\times n};     $   $  C_1, \cdots, C_{m_2}:[0,T]\rightarrow\mathbb{R}^{k_2\times k_2 }$, and $ D_1, \cdots, 
 D_{k_2}:[0,T]\\\rightarrow\mathbb{R}^{k_2 } $  are given deterministic functions. $A_i (t), C_{j}(t), i=1, \cdots, m_1, j=1, \cdots, m_2$ 
are smooth with   bounded derivatives.  We assume also that $A$'s  and $C$'s are 
nilpotent. This means that there is a  positive integer  $N_0$ such that 
for all $n> N_0$
\begin{equation}
\begin{aligned}
&[\dots [A_{i_1}(s_1),A_{i_2}(s_2)]\dots],A_{i_n}(s_n)]=0\,
\quad \forall \  1\le i_1, \cdots, i_n\le m_1,\\
&[\dots [C_{i_1}(s_1),C_{i_2}(s_2)]\dots],C_{i_n}(s_n)]=0\,
\quad \forall \  1\le i_1, \cdots, i_n\le m_2\,, 
\end{aligned}\label{e.3.9} 
\end{equation}
where the symbol $[A, B]=AB-BA$ denotes the commutator of 
the matrices  $A$ and $B$.   Moreover, we assume that  there exists a constant $C$ such that $|D(t)|+|D^{-1}(t)|\leq C$.

\item[{\bf (H2)}]  $  \sigma_1,\sigma_2 ,\cdots,\sigma_{k_1} :[0,T]\times\mathbb{R}^n\times\mathbb{R}^d\rightarrow\mathbb{R}^n,$   $b :[0,T]\times\mathbb{R}^n\times\mathbb{R}^d\rightarrow\mathbb{R}^n$, 
 $\xi^u \in\mathbb{R}^{k_2}, h :[0,T]\times\mathbb{R}^n\times\mathbb{R}^d\rightarrow\mathbb{R}^{k_2} $ ,  $f  :[0,T]\times\mathbb{R}^n\times{\RR}^d\rightarrow\mathbb{R},$ and $\Phi  :\mathbb{R}^n\rightarrow\mathbb{R}$ are given deterministic functions. $b,\sigma,f,\Phi$ and $h$ are twice continuously differentiable with respect to $x$ and  all the  derivatives $ b_X,b_{XX},\sigma_X,\sigma_{XX},f_{X},f_{XX},\Phi_X,\Phi_{XX},h_X,\\ h_{XX}$ are continuous  in $(x,u)$. 
The functions $b, \sigma,f_X,\Phi_X$ are bounded by $C(1+|x|+|u|)$ and $b_X,  b_{XX},\sigma_X,\sigma_{XX},h,h_X, h_{XX},f_{XX}$ and $\Phi_{XX}$ are bounded.

\item[{\bf (H3)}] 
$b,\sigma$  are  bounded   and uniformly Lipschitz continuous in $x$. 
For every $\om_2\in\Om_2$, $\mathbf{B}(\om_2)\in D^\alpha([0,T];\mathbb{R}^{m_1})$ (e.g. Definition \ref{def.6.3} below )  for some  $\al \in (1/3, H)  $. $A_j(\cdot),A^\prime_j(\cdot)$ are deterministic functions with $A^\prime_j$ being the Gubinelli derivative \cite{Gubinelli04} of $A_j$ and $(A_j,A_j^\prime)\in\mathscr{D}_X^{\al,\al}([0,T];\RR^{n\times n})$ (a reduced space defined in \cite[Definition 3.4]{FHL22arxiv}) for $j=1,\dots,m_1$. 
 \end{enumerate}

Notice  that the diffusion terms in both the state and observation equations  \eqref{x} and \eqref{xi} include two parts, one is driven by  Brownian motions $W,\tilde{W}$ and the other  one is driven by  fBms $B,\tilde{B}$. 
 However, the ones driven by fBms are assumed to be linear with respect to the unknowns. 

%
%
%
%
%
The stochastic control problem for completely or partially observed system driven by standard Brownian motions have been studied since long and the theory is rather complete. As for the partially observed optimal control problem involving only Brownian motions, we refer to \cite{LT95},\cite{Tang98},\cite{WWX13} and the references therein. There exist also some results on the stochastic systems driven only by 
fBms of $H>1/2$ and we refer to \cite{HHS13} and \cite{Sun21} and the references therein for further discussion of this topic. 
 
To obtain the maximum principle for the optimal control $\bar u$ for our problem \eqref{x}-\eqref{e.1.3}, one has to perturb     $\bar u$ by its spike variation $u^\vare$
(so that $u^\vare\to  \bar u$ as $\vare\to 0$),
and analyse $J(u^\vare)-J(\bar u)\ge 0$ for all small $\vare$ to obtain 
  necessary  conditions (namely the maximum principle) that the optimal
control $\bar u$ must satisfy. To analyse $J(u^\vare)-J(\bar u)\ge 0$
one needs to analyse $X^{u^\vare}-X^{\bar u}$ for the controlled state 
$X^u$.  
When the state system is driven only by fBm of Hurst parameter $H>1/2$,
this task is done by using the well-developed theory on Young integral and is already difficult  (see \cite[Subsection 5.2]{HHS13}).
It will certainly much more difficult when the systems are driven by both Brownian motion and fBm
and  it is even more   difficult to  handle directly 
in particular when the Hurst parameter of the fBm
is less than $1/2$.  

To get around this difficulty,
to the best of knowledge, we only know   the work of  Buckdahn and Jing \cite{BJ14}, where the Hurst parameter of the fBm 
can be less than $1/2$.   However, there are three  critical assumptions  in the mentioned  work. The first one is that the system is completely observable;
the second one is that the state system contains only one equation;  the third one is that the diffusion coefficient of the fBm in the state equation is linear. 
In particular, these three conditions enables the authors 
to use the  Girsanov transformations   to transform the original system into another ``classical'' one driven only by standard Brownian motion  (but depending on the fBm implicitly through the coefficients),
and then  obtained the maximum principle for the optimal control problem. 

In this work, in addition to allowing the   system to be partially observable, the controlled state  system can also be allowed to contain  several equations. In our new context,  the 
Girsanov transformation method powerful in   single equation case (\cite{BJ14}) is no longer applicable.  Our idea is to introduce  another transformation to transform the problem to a ``classical'' one. 
 To this end, we first give the dyadic piecewise linear approximation of the fBms $B^\vare,\tilde{B}^\vare$ which then gives (random) ordinary differential equations (ODEs for short) driven by the approximating process. Then the 
  systems of approximated state and observation equations  
are transformed to the ``classical''   ones driven by Brownian motions alone,  but with coefficients containing the fBms.  After a usual limiting argument, we transform the original problem to a ``classical'' one. 
Finally, we can apply  the traditional 
technique to obtain maximum principle for our problem  with a slight adoption of random coefficients.

Here is the organization of the paper. In Section 2 we give a preliminary about 
stochastic integral via rough path theory. In Section 3, we approximate the fBms 
by the dyadic approximations and we introduce two processes satisfying two linear equations  which 
will be used to transform our systems   of   state and observation equations 
to classical ones.  Sections 4 and 5 study the two processes when $H>1/2$ and $1/3<H<1/2$, respectively.  
In Section 6, we use the processes obtained in the previous sections to transform 
our original optimal control problem to a classical one.   In Section 7, we obtain the adjoint 
backward stochastic differential equations and
the maximum principle for transformed optimal control problem.  To make the paper more readable, we postpone some complicated  computations  to appendix.

\section{Preliminary} 

\subsection{Some spaces for integrable two-parameter processes}
Let $X=(X^i)$ be a H\"older continuous path in $\mathbb{R}^d$ with exponent  $\al\in(1/3,1)$. Throughout the sequel, we will reserve the constant $N$ as the largest integer such that $N\al\leq1$. Define the simplexes $\Delta(a,b)$ and $\Delta_2(a,b)$ as follows:
\begin{equation}
\begin{aligned}
\Delta(a,b):&=\{(s,t)\in[0,T]^2,\ a\leq s\leq t\leq b\},\\
\Delta_2(a,b):&=\{(s,u,t)\in[0,T]^3,\ a\leq s\leq u\leq t\leq b\},
\end{aligned}
\end{equation}
and we sometimes write $\Delta=\Delta(0,T)$ and $\Delta_2=\Delta_2(0,T)$. Moreover, given a path $X=(X_t):[0,T]\rightarrow\RR^d$ in the rough path's framework, we denote by two-parameter path $(\delta X_{s,t})_{(s,t)\in\Delta}$ the increment of $X$, where $\delta X_{s,t}:=X_t-X_s$ for every $(s,t)\in\Delta$. Additionally, if $A:\Delta\rightarrow\RR^d$ is a two-parameter path, then the increment of $A$ is a map $\delta A:\Delta_2\rightarrow\RR^d$ given by $\delta A_{s,u,t}:=A_{s,t}-A_{s,u}-A_{u,t}$, for every $(s,u,t)\in\Delta_2$.

For any $\alpha\in(0,1)$, we denote by $\cC^{\alpha}([0,T];\RR^d)$ the space of $\RR^d$-valued $\alpha$-H\"older  continuous functions on the interval $[0,T]$ and make use of the notation
\begin{equation}
\Vert X\Vert_{\alpha;[0,T]}=\sup_{0<r<\theta<T}\frac{|X_r-X_{\theta}|}{|r-\theta|^{\alpha}}.
\end{equation}
We need some  spaces and notations introduced in  \cite[Section 2]{FHL22arxiv}. For a sub-$\sigma$-field $\mathcal{G}\subset\mathcal{F}$, define the conditional $L_m$-norm as $\Vert\xi|\mathcal{G}\Vert_m=[\EE(|\xi|^m|\mathcal{G})]^{\frac{1}{m}}$.
The resulting mixed $L_{m,n}$-norm $\Vert\Vert\xi|\mathcal{G}\Vert_m\Vert_n$, with $m,n\geq1$, reduces to the classical $L_m$-norm when $m=n$.

Set $\bar{\Omega}:=(\Omega,\mathcal{F},\PP)$. Given a separable Banach space $\mathcal{X}$ with norm $|\cdot|_{\mathcal{X}}$, the Lebesgue space $L^m(\bar{\Omega};\mathcal{X})$ of $\mathcal{X}$-valued $L_m$-integrable random variables is denoted by $L_m(\mathcal{X})$. Its norm is given by $\xi\mapsto\Vert\xi\Vert_{m;\mathcal{X}}=\Vert|\xi|_{\mathcal{X}}\Vert_m$ or simply $\Vert\xi\Vert_m$ if $\mathcal{X}$ is clear from the context. We will use the abbreviation $\EE_s=\EE(\cdot|\mathcal{G}_s)$ for all $s\in[0,T]$,  where $\mathcal{G}_s=\mathcal{F}_s^W\vee 
\mathcal{F}_s^ {\tilde W}$.

For each  $m\in[1,\infty]$, $C_2L_m([0,T],\bar{\Omega};\mathcal{X})$ denotes the space of $\mathcal{X}$-valued, two-parameter stochastic processes $(s,t)\mapsto A_{s,t}$ such that
\begin{enumerate}
\item[(i)] $A:\Omega\times\Delta\rightarrow\mathcal{X}$ is $\mathcal{F}\otimes\mathcal{B}(\Delta)/\mathcal{B}(\mathcal{X})$-measurable.
\item[(ii)]  $A:\Delta\rightarrow L^m(\Omega;\mathcal{X})$ is continuous. 
\end{enumerate}

Equipped with the norm $\Vert A\Vert_{C_2L_m\mathcal{X}}:=\sup_{(s,t)\in\Delta}\Vert A_{s,t}\Vert_{m,\mathcal{X}}$, the space $C_2L_m([0,T],\bar{\Omega};\\ \mathcal{X})$ is a Banach space. For notational simplicity, we will abbreviate this space by $C_2L_m$. 
For each $m,n\in[1,\infty],m\leq n$, let $C_2L_{m,n}([0,T],\bar{\Omega},\{\mathcal{G}_t\};\mathcal{X})$ be the space of two-parameter processes $A$ such that
\begin{enumerate}
\item[(i)] $A$ belongs to $C_2L_m([0,T],\bar{\Omega};\mathcal{X})$,
\item[(ii)] $\Vert A\Vert_{C_2L_{m,n}\mathcal{X}}:=\sup_{(s,t)\in\Delta}\Vert\Vert A_{s,t}|\mathcal{G}_s\Vert_{m;\mathcal{X}}\Vert_n<\infty$. 
\end{enumerate} 

The linear space $C_2^\alpha L_{m,n}([0,T],\bar{\Omega},\{\mathcal{G}_t\};\mathcal{X})$ consists of two-parameter processes $(A_{s,t})_{(s,t)\in\Delta}$ in $C_2L_{m,n}$ such that
\begin{equation}
\Vert A\Vert_{\alpha;m,n;[0,T]}:=\sup_{s,t\in[0,T]:s<t}\frac{\Vert\Vert A_{s,t}|\mathcal{G}_s\Vert_m\Vert_n}{|t-s|^\alpha}<\infty.
\end{equation}
Similarly, denote by $C_2^\alpha L_n([0,T],\bar{\Omega},\{\mathcal{G}_t\};\mathcal{X})$ the two-parameter processes $(A_{s,t})_{(s,t)\in\Delta}$ in $C_2L_n$ such that
\begin{equation}
\Vert A\Vert_{\alpha;n;[0,T]}:=\sup_{s,t\in[0,T]:s<t}\frac{\Vert A_{s,t}\Vert_n}{|t-s|^\alpha}<\infty.
\end{equation}

\subsection{Young's type stochastic integral}
In this subsection, we construct the stochastic integral  of $Z$ against $\RR^d$-valued $\al$-H\"older continuous path $X$ ($\alpha\in (1/2, 1)$). First, we introduce the following definition, which can be regarded as the reduced version of \cite{FHL22arxiv}.
\begin{definition}\label{Young controlled}
We say that $Z$ is stochastic controlled rough path of $m$-integrability and $\beta$-H\"older regularity with values in $\RR^n$ with respect to $\{\cG_t\}$ if the followings are  satisfied
\begin{enumerate}
\item[(i)] $Z:\Omega\times[0,T]\rightarrow\RR^n$ is $\{\cG_t\}$-adapted;
\item[(ii)] $\delta Z$ belongs to $C_2^\beta L_m([0,T],\bar{\Omega},\{\cG_t\};\RR^n)$. 
\end{enumerate} 
The class of such processes is denoted by $\cL_X^\beta L_m([0,T],\bar{\Omega},\{\cG_t\};\RR^n)$.
\end{definition}
In order to establish the Young's stochastic integral, we need the following  sewing lemma (a reduced version of \cite[Theorem 2.7]{FHL22arxiv},  \cite{le} in the case of $\al\in(1/2,1)$). 
\begin{lemma}\label{sewing lemma}
Let $A:\Omega\times\Delta\rightarrow V$ be a measurable adapted $L^m$-integrable two-parameter process. Define $\delta A_{s,u,t}:=A_{s,t}-A_{s,u}-A_{u,t}$. If there is finite constants $C$ such that
\begin{equation}\label{sewing}
\Vert\delta A_{s,u,t}\Vert_m\leq C|t-u|^\alpha|u-s|^\beta,\ \forall\ (s,u,t)\in\Delta_2,
\end{equation}
for some $\alpha+\beta>1$. Then there exists a unique measurable adapted $L^m$-integrable process $\cA:\Omega\times[0,T]\rightarrow V$ with $\cA_0=0$ such that we have the following local estimate
\begin{equation}
\Vert\delta \cA_{s,t}-A_{s,t}\Vert_m\leq C|t-u|^\alpha|u-s|^\beta\leq o(|t-s|),\ \forall\ (s,t)\in\Delta.
\end{equation}
Moreover, we have
\begin{equation}
\lim_{\pi\in\cP([0,T]),|\pi|\rightarrow0}\sup_{t\in[0,T]}\Big\Vert\cA_t-\sum_{[u,v]\in\pi,u\leq t}A_{u,v\wedge t}\Big\Vert_m=0.
\end{equation}
\end{lemma}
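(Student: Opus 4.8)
The plan is to run the classical Young-regime sewing argument in the Banach space $V$, regarding each $A_{s,t}$ as an element of $L^m(\bar{\Omega};V)$. The point to keep in mind is that hypothesis \eqref{sewing} carries no conditional-expectation cancellation and already has exponent sum $\alpha+\beta>1$, so no martingale or Burkholder--Davis--Gundy input is needed: the deterministic sewing proof transfers essentially verbatim, and the only extra bookkeeping is to check that measurability and $\{\cG_t\}$-adaptedness survive the $L^m$-limits of finite sums.

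\textbf{Step 1: a partition-uniform a priori bound.} For $(s,t)\in\Delta$ and a partition $\pi$ of $[s,t]$, write $A^\pi_{s,t}:=\sum_{[u,v]\in\pi}A_{u,v}$. I would first establish
\[
\Vert A^\pi_{s,t}-A_{s,t}\Vert_m\le K\,|t-s|^{\alpha+\beta},\qquad K:=C\,2^{\alpha+\beta}\zeta(\alpha+\beta),
\]
uniformly over all $\pi$, by the standard greedy point-removal argument: if $\pi$ has $k$ subintervals with interior nodes $r_1<\dots<r_{k-1}$, then $\sum_{j}(r_{j+1}-r_{j-1})\le 2|t-s|$, so some interior node $r_j$ satisfies $r_{j+1}-r_{j-1}\le\frac{2}{k-1}|t-s|$; deleting it changes $A^\pi_{s,t}$ by $-\delta A_{r_{j-1},r_j,r_{j+1}}$, whose $L^m$-norm is at most $C|r_{j+1}-r_{j-1}|^{\alpha+\beta}$ by \eqref{sewing}, and iterating down to the trivial partition while summing the convergent series $\sum_{i\ge1}i^{-(\alpha+\beta)}$ (here $\alpha+\beta>1$ is used) gives the claim.

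\textbf{Step 2: construction of $\cA$.} Taking the dyadic partitions $\pi_n$ of $[s,t]$, I would compare $A^{\pi_n}_{s,t}$ with its common refinement with $\pi_{n'}$ by applying the Step 1 bound on each subinterval of $\pi_n$; this gives $\Vert A^{\pi_n}_{s,t}-A^{\pi_n\vee\pi_{n'}}_{s,t}\Vert_m\le K|\pi_n|^{\alpha+\beta-1}|t-s|$, so $(A^{\pi_n}_{s,t})_n$ is Cauchy in $L^m(\bar{\Omega};V)$, with a limit $\mathcal I_{s,t}$ independent of the approximating sequence, and the Step 1 bound passes to the limit as $\Vert\mathcal I_{s,t}-A_{s,t}\Vert_m\le K|t-s|^{\alpha+\beta}$. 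Choosing, for $(s,u,t)\in\Delta_2$, partitions of $[s,t]$ that all contain $u$ and splitting the sums at $u$ yields additivity $\mathcal I_{s,t}=\mathcal I_{s,u}+\mathcal I_{u,t}$, so that $\cA_t:=\mathcal I_{0,t}$ satisfies $\cA_0=0$, $\delta\cA_{s,t}=\mathcal I_{s,t}$, and the local estimate $\Vert\delta\cA_{s,t}-A_{s,t}\Vert_m\le K|t-s|^{\alpha+\beta}=o(|t-s|)$. Since each $A^{\pi_n}_{0,t}$ is a finite sum of adapted, measurable values $A_{u,v}$ with $v\le t$ and the limit is taken in $L^m$, $\cA$ inherits measurability, $\{\cG_t\}$-adaptedness and $L^m$-integrability, and is $L^m$-continuous in $t$ by the local estimate together with the $L^m$-continuity of $A$.

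\textbf{Step 3: uniqueness, the Riemann-sum limit, and where the work lies.} For uniqueness, if $\tilde\cA$ also has $\tilde\cA_0=0$ and $\Vert\delta\tilde\cA_{s,t}-A_{s,t}\Vert_m=o(|t-s|)$, then $\cB:=\cA-\tilde\cA$ has $\cB_0=0$ and $\Vert\delta\cB_{s,t}\Vert_m=o(|t-s|)$; writing $\cB_t=\sum_{[a,b]\in\pi}\delta\cB_{a,b}$ over a partition $\pi$ of $[0,t]$ and letting $|\pi|\to0$ forces $\cB\equiv0$. For the convergence of Riemann sums, restricting a partition $\pi$ of $[0,T]$ to $[0,t]$ gives a partition $\pi_t$ with $|\pi_t|\le|\pi|$, and
\[
\Big\Vert\cA_t-\sum_{[u,v]\in\pi,\,u\le t}A_{u,v\wedge t}\Big\Vert_m
=\Big\Vert\sum_{[a,b]\in\pi_t}\big(\delta\cA_{a,b}-A_{a,b}\big)\Big\Vert_m
\le K\sum_{[a,b]\in\pi_t}|b-a|^{\alpha+\beta}\le KT\,|\pi|^{\alpha+\beta-1},
\]
which tends to $0$ as $|\pi|\to0$ uniformly in $t$. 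The only genuinely delicate point is the partition-uniform bound of Step 1, which carries all the combinatorics; once it is in hand, Cauchyness, independence of the partition sequence, additivity, uniqueness and the Riemann-sum convergence all follow mechanically, and the passage from the deterministic to the $L^m(\bar{\Omega})$-valued setting costs nothing beyond the observation that finite sums and $L^m$-limits preserve adaptedness and measurability — in particular, no stochastic-sewing refinement enters here, precisely because \eqref{sewing} is a pathwise-type bound with $\alpha+\beta>1$.
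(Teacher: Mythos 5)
Your proof is correct. Note that the paper does not actually prove this lemma: it is presented as a reduced version of the stochastic sewing lemma of Friz--Hocquet--L\^e and of L\^e, and the proof is delegated to those references. Your route is the classical deterministic (Young/Gubinelli) sewing argument transplanted verbatim to the Banach space $L^m(\bar{\Omega};V)$ --- greedy point removal for the partition-uniform bound, dyadic Cauchy sequences and common refinements for existence and additivity, and the telescoping estimate for the Riemann-sum convergence. This is legitimate for exactly the reason you identify: the hypothesis \eqref{sewing} is an unconditional $L^m$ bound with exponent sum $\alpha+\beta>1$, so none of the conditional-expectation cancellation or Burkholder--Davis--Gundy input of the genuine stochastic sewing lemma is needed; what you give up relative to the cited machinery is only generality (the stochastic version reaches $\alpha+\beta>1/2$ under an extra conditional bound), which is irrelevant for the Young-regime integral constructed in Theorem \ref{p12}. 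Two small points worth flagging. First, your Step 1 yields the constant $K=C\,2^{\alpha+\beta}\zeta(\alpha+\beta)$ and the bound $K|t-s|^{\alpha+\beta}$, so the local estimate in the statement should be read with such a constant and with $|t-u|^{\alpha}|u-s|^{\beta}$ replaced by $|t-s|^{\alpha+\beta}$ (the stray $u$ there is a typo in the statement, not a defect of your argument). Second, your uniqueness step silently requires the $o(|t-s|)$ in the characterizing property of $\cA$ to be uniform over $(s,t)\in\Delta$ --- pointwise $o(|t-s|)$ alone would not let you sum $\Vert\delta\cB_{a,b}\Vert_m$ over a partition and conclude $\cB_t=0$; this uniform reading is the standard one and is satisfied by the process you construct, but it deserves to be said explicitly.
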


\begin{theorem}\label{p12}
Let $X\in\cC^\alpha([0,T];\RR^d)$, $Z\in\cL_X^\beta L_m([0,T],\bar{\Omega},\{\cG_t\};\RR^n)$ with $\alpha+\beta>1$. Then $A_{s,t}:=Z_s\delta X_{s,t}$ define a two-parameter stochastic process which satisfies the hypotheses of the sewing lemma. We define   $\int_0^{\cdot}ZdX$  to be   the limiting (continuous) process     of $\sum_{[u,v]\in\cP:u\leq t}Z_u\delta X_{u,v\wedge t}$, i.e.,
\begin{equation}\label{e.2.4}
\lim_{\pi\in\cP([0,T]),|\pi|\rightarrow0}\sup_{t\in[0,T]}\Big\Vert\int_0^tZ_sdX-\sum_{[u,v]\in\pi,u\leq t}Z_u\delta X_{u,v\wedge t}\Big\Vert_m=0 
\end{equation}
 and for every $(s,t)\in\Delta$, we have the following bounds
\begin{equation}\label{bounds}
\bigg\Vert\int_s^tZdX-Z_s\delta X_{s,t}\bigg\Vert_m\leq C|t-s|^{\alpha+\beta}.
\end{equation}
We call $\int_0^{\cdot}ZdX$ the stochastic integral of Z against X.
\end{theorem}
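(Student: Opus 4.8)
The plan is to reduce the statement to a single application of the sewing lemma (Lemma~\ref{sewing lemma}) to the two-parameter process $A_{s,t}:=Z_s\,\delta X_{s,t}$. Accordingly, three things have to be checked: (a) that $A$ is measurable, adapted and $L^m$-integrable; (b) that its second-order increment $\delta A$ obeys the bound \eqref{sewing} for a pair of exponents with sum exceeding $1$; and (c) that the sewn process $\cA$ produced by the lemma is exactly the object we wish to denote $\int_0^\cdot Z\,dX$, so that \eqref{e.2.4} and \eqref{bounds} are the two conclusions of the lemma.

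First I would dispatch (a), which is pure bookkeeping. Since $X$ is a \emph{deterministic} path in $\cC^\alpha([0,T];\RR^d)$, the increment $\delta X_{s,t}=X_t-X_s$ is deterministic with $|\delta X_{s,t}|\le\Vert X\Vert_{\alpha;[0,T]}|t-s|^\alpha$. Since $Z$ is $\{\cG_t\}$-adapted and $m$-integrable (so $M:=\sup_{t\in[0,T]}\Vert Z_t\Vert_m<\infty$, writing $Z_t=Z_0+\delta Z_{0,t}$) with $\delta Z\in C_2^\beta L_m([0,T],\bar{\Omega},\{\cG_t\};\RR^n)$, the process $A_{s,t}=Z_s\,\delta X_{s,t}$ is $\cG_s$-measurable (hence adapted), lies in $L^m$ with $\Vert A_{s,t}\Vert_m\le M\Vert X\Vert_{\alpha;[0,T]}|t-s|^\alpha$, and its joint measurability in $(\omega,s,t)$ follows from $\omega$-measurability together with continuity in $(s,t)$.

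The one step with real content is (b). Using additivity of the increment, $\delta X_{s,t}=\delta X_{s,u}+\delta X_{u,t}$, I would compute
\[
\delta A_{s,u,t}=A_{s,t}-A_{s,u}-A_{u,t}=Z_s\,\delta X_{s,t}-Z_s\,\delta X_{s,u}-Z_u\,\delta X_{u,t}=(Z_s-Z_u)\,\delta X_{u,t}=-\,\delta Z_{s,u}\,\delta X_{u,t},
\]
and then, $\delta X_{u,t}$ being deterministic,
\[
\Vert\delta A_{s,u,t}\Vert_m\le|\delta X_{u,t}|\,\Vert\delta Z_{s,u}\Vert_m\le\Vert X\Vert_{\alpha;[0,T]}\,\Vert\delta Z\Vert_{\beta;m;[0,T]}\,|t-u|^\alpha\,|u-s|^\beta,
\]
which is precisely hypothesis \eqref{sewing} with exponents $\alpha,\beta$ (so $\alpha+\beta>1$) and constant $C=\Vert X\Vert_{\alpha;[0,T]}\Vert\delta Z\Vert_{\beta;m;[0,T]}$. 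This factorisation $\delta A=-\,\delta Z\cdot\delta X$ is the whole reason the Young construction works and is the only place the controlled-path structure of $Z$ is used. I do not expect a genuine obstacle here: all the analytic work sits inside the already-established Lemma~\ref{sewing lemma}, and the only care required is the uniform bound $M<\infty$ and the joint measurability of $A$ just verified.

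Finally, Lemma~\ref{sewing lemma} furnishes a unique measurable adapted $L^m$-integrable process $\cA$ with $\cA_0=0$ satisfying the local estimate $\Vert\delta\cA_{s,t}-A_{s,t}\Vert_m\le C|t-s|^{\alpha+\beta}$ and the Riemann-sum convergence $\lim_{|\pi|\to0}\sup_{t}\Vert\cA_t-\sum_{[u,v]\in\pi,\,u\le t}A_{u,v\wedge t}\Vert_m=0$. Setting $\int_0^\cdot Z\,dX:=\cA$, identity \eqref{e.2.4} is exactly this last display after substituting $A_{u,v\wedge t}=Z_u\,\delta X_{u,v\wedge t}$, and the bound \eqref{bounds} is the local estimate read with $\int_s^t Z\,dX=\delta\cA_{s,t}=\cA_t-\cA_s$. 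The continuity claimed in the statement then comes for free, since $\Vert\delta\cA_{s,t}\Vert_m\le\Vert A_{s,t}\Vert_m+C|t-s|^{\alpha+\beta}\le C'|t-s|^\alpha$ shows $t\mapsto\int_0^tZ\,dX$ is $\alpha$-Hölder continuous as a map into $L^m(\bar{\Omega};\RR^n)$.
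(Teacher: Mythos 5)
Your proposal is correct and follows essentially the same route as the paper: both arguments reduce the theorem to the sewing lemma via the identity $\delta A_{s,u,t}=(Z_s-Z_u)\,\delta X_{u,t}$ and the bound $\Vert\delta A_{s,u,t}\Vert_m\le \Vert X\Vert_{\alpha}\Vert\delta Z\Vert_{\beta;m}|u-s|^{\beta}|t-u|^{\alpha}$. The additional remarks on measurability, the uniform $L^m$ bound on $Z$, and the H\"older continuity of the sewn process are harmless elaborations of what the paper leaves implicit.
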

\begin{proof}
First, for two-parameter process $A_{s,t}=Z_s\delta X_{s,t}$, we see  that $\delta A_{s,u,t}$ satisfies the conditions in Lemma \ref{sewing lemma}. Indeed, by condition (i) and (ii) in Definition \ref{Young controlled}, we have, for $s<u<t$,
\begin{equation}
\begin{aligned}
\Vert\delta A_{s,u,t}\Vert_m&=\Vert A_{s,t}-A_{s,u}-A_{u,t}\Vert_m\\
&=\Vert (Z_s-Z_u)(X_t-X_u)\Vert_m
\leq(\EE|Z_s-Z_u|^m)^{\frac{1}{m}}|X_t-X_u|\\
&\leq\sup_{s,u}\frac{\Vert Z_s-Z_u\Vert_m}{|s-u|^\beta}|s-u|^\beta|X_t-X_u|\leq C|s-u|^\beta|t-u|^\alpha.
\end{aligned}
\end{equation}
By Lemma \ref{sewing lemma},  we have the convergence \eqref{e.2.4} and local estimate \eqref{bounds}.
\end{proof}

\subsection{Rough stochastic integrals in $\al\in(1/3,1/2)$ H\"older scale}

In this subsection, we give the definition of stochastic integral against the fBm ($H\in(1/3,1/2)$) which is directly from \cite{FHL22arxiv}.

%
%
%
%

\begin{theorem}\label{rough int}{\rm\cite[Theorem 3.7]{FHL22arxiv}}
Let $\al\in(1/3,1/2), \beta,\beta^\prime\in[0,1], \al+\beta>1/2,\al+(\al\wedge\beta)+\beta^\prime>1,m\in[2,\infty),n\in[m,\infty]$ and $\mathbf{X}=(X,\mathbb{X})\in\cC^\alpha([0,T];V)$. Suppose that $(Z,Z^\prime)$ is an extended stochastic controlled rough path in $\bar{D}_X^{\beta,\beta^\prime}L_{m,n}([0,T],\bar{\Omega},\{\mathcal{G}_t\};W)$ (defined in \cite[Definition 3.1]{FHL22arxiv}) with $Z^\prime$ being the Gubinelli derivative of $Z$. Then $\Xi_{s,t}=Z_s\delta X_{s,t}+Z^\prime_s\mathbb{X}_{s,t}$ defines a two-parameter stochastic process which satisfies the hypotheses of the stochastic sewing lemma  \rm{\cite[Theorem 2.7]{FHL22arxiv}}. Then   
\begin{equation}\label{int limit}
 \lim_{\mathcal{P}\rightarrow0}\sum_{(s,t)\in\mathcal{P}}\Xi_{s,t}
\end{equation}
exists in probability, and is denoted by $\int_0^TZd\mathbf{X} =\mathcal{I}(\Xi)_{0,T}$.  Moreover,   for every $(s,t)\in\Delta$, we have the following bounds
\begin{equation}\label{bound1}
\begin{aligned}
&\bigg\Vert\bigg\Vert\int_s^tZ d\mathbf{X}-\Xi_{s,t}\bigg|\mathcal{G}_s\bigg\Vert_m\bigg\Vert_n\lesssim\Big(|\delta X|_\al\Vert\delta Z\Vert_{\beta;m,n}+|||\mathbb{X}|||^2_\al\sup_{r\in[s,t]}\Vert Z^\prime_r\Vert_n\Big)|t-s|^{\al+(\al\wedge\beta)}\\
&\qquad+\Big(|\delta X|_\al\Vert\EE_{\cdot}R^Z\Vert_{\beta+\beta^\prime;n}+|\mathbb{X}|_{2\al}\Vert\EE_{\cdot}\delta Z^\prime\Vert_{\beta^\prime;m,n}\Big)|t-s|^{\al+(\al\wedge\beta)+\beta^\prime}
\end{aligned}
\end{equation}
and
\begin{equation}\label{bound2}
\begin{aligned}
&\bigg\Vert\mathbb{E}_s\bigg(\int_s^tZ d\mathbf{X}-\Xi_{s,t}\bigg)\bigg\Vert_n\lesssim \Big(|\delta X|_\al\Vert\EE_{\cdot}R^Z\Vert_{\beta+\beta^\prime;n}+|\mathbb{X}|_{2\al}\Vert\EE_{\cdot}\delta Z^\prime\Vert_{\beta^\prime;m,n}\Big)|t-s|^{\al+(\al\wedge\beta)+\beta^\prime},
\end{aligned}
\end{equation}
where the above constants are deterministic and depend only on $\al,\beta,\beta^\prime,m,T$.
\end{theorem}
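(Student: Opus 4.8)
The plan is to deduce Theorem~\ref{rough int} from the stochastic sewing lemma \cite[Theorem~2.7]{FHL22arxiv}. One takes the germ $\Xi_{s,t}=Z_s\delta X_{s,t}+Z'_s\mathbb{X}_{s,t}$, checks that it satisfies the two coherence hypotheses of that lemma, and then reads off all the assertions: the sewing $\mathcal{I}(\Xi)$ exists (adapted, $L_m$-integrable, $\mathcal{I}(\Xi)_0=0$), the Riemann sums in \eqref{int limit} converge in probability to $\mathcal{I}(\Xi)_{0,T}=:\int_0^TZ\,d\mathbf{X}$, and the bounds \eqref{bound1}--\eqref{bound2} are exactly the lemma's quantitative output applied to the two coherence estimates.

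The first step is to compute the defect $\delta\Xi_{s,u,t}=\Xi_{s,t}-\Xi_{s,u}-\Xi_{u,t}$. Collecting $Z_s\delta X_{s,t}-Z_s\delta X_{s,u}-Z_u\delta X_{u,t}=-\delta Z_{s,u}\,\delta X_{u,t}$, applying Chen's relation $\mathbb{X}_{s,t}-\mathbb{X}_{s,u}-\mathbb{X}_{u,t}=\delta X_{s,u}\otimes\delta X_{u,t}$ to the second-level part, and substituting the controlled expansion $\delta Z_{s,u}=Z'_s\delta X_{s,u}+R^Z_{s,u}$ so that the terms $Z'_s\,\delta X_{s,u}\otimes\delta X_{u,t}$ cancel, one arrives at
\begin{equation}
\delta\Xi_{s,u,t}=-R^Z_{s,u}\,\delta X_{u,t}-\delta Z'_{s,u}\,\mathbb{X}_{u,t}.
\end{equation}
Since $\mathbf{X}=(X,\mathbb{X})$ is a fixed deterministic rough path, $\delta X_{u,t}$ and $\mathbb{X}_{u,t}$ are constants and may be pulled out of the conditional expectations $\mathbb{E}_s$, $\mathbb{E}_u$; this is precisely what makes both coherence bounds provable from the controlled-path data alone.

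It then remains to verify the two hypotheses, using $(Z,Z')\in\bar D_X^{\beta,\beta'}L_{m,n}$ with $Z'$ the Gubinelli derivative of $Z$. For the crude bound, $\Vert\delta\Xi_{s,u,t}\Vert_m\le\Vert R^Z_{s,u}\Vert_m|\delta X_{u,t}|+\Vert\delta Z'_{s,u}\Vert_m|\mathbb{X}_{u,t}|$; from $R^Z=\delta Z-Z'\delta X$ one gets $\Vert R^Z_{s,u}\Vert_m\lesssim|u-s|^{\alpha\wedge\beta}$ (in terms of $\Vert\delta Z\Vert_{\beta;m,n}$ and $\sup_r\Vert Z'_r\Vert_n$), while $\Vert\delta Z'_{s,u}\Vert_m\le2\sup_r\Vert Z'_r\Vert_n$, so, with $|\delta X_{u,t}|\lesssim|t-u|^\alpha$ and $|\mathbb{X}_{u,t}|\lesssim|t-u|^{2\alpha}$, $\Vert\delta\Xi_{s,u,t}\Vert_m\lesssim|u-s|^{\alpha\wedge\beta}|t-u|^\alpha+|t-u|^{2\alpha}$, an exponent that beats $1/2$ because $\alpha+\beta>1/2$ and $\alpha>1/3$. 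For the refined bound, the content of ``$Z'$ is the Gubinelli derivative'' is exactly that $\mathbb{E}_\cdot R^Z$ and $\mathbb{E}_\cdot\delta Z'$ gain the H\"older orders $\beta+\beta'$ and $\beta'$; pulling the deterministic factors out of $\mathbb{E}_s$ gives $\Vert\mathbb{E}_s\delta\Xi_{s,u,t}\Vert_n\lesssim|u-s|^{\beta+\beta'}|t-u|^\alpha+|u-s|^{\beta'}|t-u|^{2\alpha}$, of total order $\alpha+(\alpha\wedge\beta)+\beta'>1$ by hypothesis. The stochastic sewing lemma then applies, and carrying the four seminorms $\Vert\delta Z\Vert_{\beta;m,n}$, $\sup_r\Vert Z'_r\Vert_n$, $\Vert\mathbb{E}_\cdot R^Z\Vert_{\beta+\beta';n}$, $\Vert\mathbb{E}_\cdot\delta Z'\Vert_{\beta';m,n}$ through its output estimates yields \eqref{bound1}--\eqref{bound2} with deterministic constants depending only on $\alpha,\beta,\beta',m,T$. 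The only real obstacle is this last piece of bookkeeping: one must track those seminorms correctly through the mixed conditional $L_{m,n}$-norms, keep the outer conditioning $\mathbb{E}_s$ separate from the intermediate $\mathbb{E}_u$ internal to the sewing argument, and check that each H\"older exponent that appears genuinely clears the thresholds $1/2$ and $1$ --- which is exactly where the slightly unusual hypothesis $\alpha+(\alpha\wedge\beta)+\beta'>1$ gets forced.
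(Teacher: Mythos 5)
The paper does not prove this theorem at all: it is imported verbatim as \cite[Theorem 3.7]{FHL22arxiv}, so there is no in-paper argument to compare against. Your reconstruction is nevertheless correct and is the canonical proof of that cited result: the algebraic identity $\delta\Xi_{s,u,t}=-R^Z_{s,u}\,\delta X_{u,t}-\delta Z'_{s,u}\,\mathbb{X}_{u,t}$ is right, the crude $L_m$-bound with exponent $\al+(\al\wedge\be)>1/2$ and the conditional bound with exponent $\al+(\al\wedge\be)+\be'>1$ are exactly the two hypotheses of the stochastic sewing lemma, and the stated estimates \eqref{bound1}--\eqref{bound2} follow by tracking the four seminorms as you indicate.
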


The stochastic integrals $\sum_{j =1}^{m_1}\int_0^tA_{j }(s)X^u_sd \mathbf{B} ^j_s$ and $\sum_{j=1}^{m_2}\int_0^tC_{j }(s)\xi^u_sd \tilde{\mathbf{B}} ^j_s$  are  well-defined pathwisely  by the above theorem (e.g. Equation \eqref{int limit}). To alleviate notation, 
we still use $\sum_{j =1}^{m_1}\int_0^tA_{j }(s)X^u_sd  B ^j_s$ and $\sum_{j=1}^{m_2}\int_0^tC_{j }(s)\xi^u_sd \tilde{B} ^j_s$ to denote them.  
%

\section{Transformation of the equation} 
%
We divide the interval $[0, T]$ into dyadic subintervals:  $0=t_0<t_1<\cdots<t_{2^k} =T$, where $t_\ell =t_\ell ^k =\frac{\ell T}{2^k}$, for $\ell=0, 1,\dots,2^k$. On the subinterval $[t_{\ell -1}, t_\ell ]$, we approximate  the fBms by  
\begin{equation}\label{e.3.a1}
\left\{
\begin{aligned}
&B_t^\vare =B_{t _{\ell -1}}+(t-t _{\ell -1})\frac{\Delta_\ell B}{\vare }
,\ t _{\ell -1}\leq t\leq t_\ell ,\\
&\tilde{B}_t^\vare =\tilde{B}_{t _{\ell-1}}+(t-t _{l-1})\frac{\Delta _\ell \tilde   B}{\vare }  ,\ t _{\ell-1}\leq t\leq t_\ell ,
 \end{aligned}
\right.
\end{equation}
where $\vare=\frac{  T}{2^k}$ and $\Delta _\ell B=B_{t _{\ell}}-B_{t _{\ell-1}}, \Delta_\ell\tilde{B}=\tilde{B}_{t _{\ell}}-\tilde{B}_{t _{\ell-1}}$.
We write the components of $B_t^\vare$  and $\tilde{B}_t^\vare$ as 
 ${B}_t^\vare =( {B}_t^{1,\vare}, \cdots, {B}_t^{m_1,\vare})^T$ and
$\tilde{B}_t^\vare =( \tilde {B}_t^{1,\vare}, \cdots, \tilde{B}_t^{m_2,\vare})^T\,. $

We approximate the state and observation equations as follows:
\begin{equation}
\left\{
\begin{aligned}
& dX^{u,\vare}_t =b(t,X^{u,\vare}_t,u_t) dt +\sum_{j=1}^{k_1}\sigma_j(t,X^{u,\vare}_t,u_t) dW^j_t +\sum^{m_1}_{j=1}A_{j}(t)X^{u,\vare}_t\dot{B}^{j,\vare}_t dt,\\
& d\xi^{u,\vare}_t =h(t,X^{u,\vare}_t,u_t) dt +\sum_{j=1}^{k_2}D_j(t) d\tilde{W}^j_t  +\sum^{m_2}_{j=1}C_{j}(t)\xi^{u,\vare}_t\dot{\tilde{B}}^{j,\vare}_t dt,\\
&X^{u,\vare}_0=x,\quad  \xi^{u,\vare}_0=0,
\end{aligned}
\right.\label{e.2.2} 
\end{equation}
where by \eqref{e.3.a1} $\dot{B}^{j,\vare}_t:=\frac{dB^{j, \vare}_t}{dt}$, $\dot{\tilde{B}}^{j,\vare}_t:=\frac{d\tilde{B}^{j, \vare}_t}{dt}$ exists
except at the division points of the partition, where we can define them as the right derivatives. The different choices of the  values of $\dot{B}^{j,\vare}_t:=\frac{dB^{j, \vare}_t}{dt}$, $\dot{\tilde{B}}^{j,\vare}_t:=\frac{d\tilde{B}^{j, \vare}_t}{dt}$  
at the division points $t_\ell$ will not affect the limits of $X^{u,\vare}$ and $\xi^{u,\vare}$.
To solve the above equation, we write the above equation as   
\begin{equation}\label{X_ap}
\left\{
\begin{aligned}
&dX^{u,\vare}_t-\sum^{m_1}_{j=1}A_{j}(t)\dot{B}^{j,\vare}_tX^{u,\vare}_tdt=b(t,X^{u,\vare}_t,u_t)dt+\sum_{j=1}^{k_1}\sigma_j(t,X^{u,\vare}_t,u_t)dW^j_t,\\
&d\xi^{u,\vare}_t-\sum^{m_2}_{j=1}C_{j}(t)\dot{\tilde{B}}^{j,\vare}_t\xi^{u,\vare}_tdt=h(t,X^{u,\vare}_t,u_t)dt+\sum_{j=1}^{k_2}D_j(t)d\tilde{W}^j_t\,. 
\end{aligned}
\right.
\end{equation}
To get rid of the terms $\dot{\tilde{B}}^{j,\vare}_t$ and  $ \dot{B}^{j,\vare}_t$
appeared in the above two systems, let us consider the ODEs,
which will be solved later  
\begin{equation}\label{approximation ode}
\left\{
\begin{aligned}
&d\Gamma^{\vare}_t=-\sum^{m_1}_{j=1}\Gamma^{\vare}_tA_{j}(t)dB^{j,\vare}_t,\\
&d\Lambda^{\vare}_t=-\sum^{m_2}_{j=1}\Lambda^{\vare}_tC_{j}(t)d\tilde{B}^{j,\vare}_t,
\end{aligned}
\right.
\end{equation}
where $\Gamma^{\vare}_t\in\mathbb{R}^{n\times n}$ and $\Lambda^{\vare}_t\in\mathbb{R}^{k_2\times k_2}$ and $dB^{j, \vare}_t=\dot{B}^{j,\vare}_tdt$, $d\tilde{B}^{j, \vare}_t=\dot{\tilde{B}}^{j,\vare}_tdt$.
Using   the product rules
\begin{equation}
\left\{
\begin{aligned}
&d\big[\Gamma^{\vare}_t X^{u,\vare}_t\big]=\Gamma^{\vare}_t dX^{u,\vare}_t+\Big(d\Gamma^{\vare}_t \Big)X^{u,\vare}_t,\\
&d\big[\Lambda^{\vare}_t \xi^{u,\vare}_t\big]=\Lambda^{\vare}_t d\xi^{u,\vare}_t+\Big(d\Lambda^{\vare}_t\Big)\xi^{u,\vare}_t,
\end{aligned}
\right.
\end{equation}
we can write \eqref{X_ap} as 
\begin{equation}\label{equality_1}
\left\{
\begin{aligned}
 \left(\Gamma^{\vare}_t\right)^{-1}d\big[\Gamma^{\vare}_tX^{u,\vare}_t\big]=&dX^{u,\vare}_t+\left(\Gamma^{\vare}_t\right)^{-1}\Big(d\Gamma^{\vare}_t\Big)X^{u,\vare}_t\\
 =&b(t,X^{u,\vare}_t,u_t) dt +\sum_{j=1}^{k_1}\sigma_j(t,X^{u,\vare}_t,u_t) dW^j_t 
 \\
 \left(\Lambda^{\vare}_t\right)^{-1}d\big[\Lambda^{\vare}_t\xi^{u,\vare}_t\big]=
 &d\xi^{u,\vare}_t+\left(\Lambda^{\vare}_t\right)^{-1}\Big(d\Lambda^{\vare}_t\Big)\xi^{u,\vare}_t \\
 =& h(t,X^{u,\vare}_t,u_t)dt+\sum_{j=1}^{k_2}D_j(t)d\tilde{W}^j_t\,.  
\end{aligned}
\right.
\end{equation}
Denoting  $Y_t^{u,\vare}=\Gamma^{\vare}_tX^{u,\vare}_t$ and $\zeta^{u,\vare}_t=\Lambda^{\vare}_t\xi_t^{u,\vare}$,  we see from   \eqref{equality_1}  that 
$(Y_t^{u,\vare}, \zeta^{u,\vare}_t)$ satisfy
\begin{equation}\label{e.2.7} 
\left\{
\begin{aligned}
&dY^{u,\vare}_t=\Gamma^{\vare}_tb(t,\left(\Gamma^{\vare}_t\right)^{-1}Y^{u,\vare}_t,u_t)dt+\Gamma^{\vare}_t\sum_{j=1}^{k_1}\sigma_j(t,\left(\Gamma^{\vare}_t\right)^{-1}Y^{u,\vare}_t,u_t)dW^j_t,\\
&d\zeta^{u,\vare}_t=\Lambda^{\vare}_th(t,\left(\Gamma^{\vare}_t\right)^{-1}Y^{u,\vare}_t,u_t)dt+\Lambda^{\vare}_t\sum_{j=1}^{k_2}D_j(t)d\tilde{W}^j_t\,. 
\end{aligned}
\right.
\end{equation}
Thus, with the transformations $X \to Y$ and $\xi \to \zeta$, we transform
the system  \eqref{e.2.2} to the above  system \eqref{e.2.7}. 
Notice that both equations in \eqref{e.2.7} do not    
contain the diffusion part driven by 
the fBms $B,\tilde{B}$ and they are classical stochastic differential equations
driven by standard Brownian motion (with $(\om_2)$-random coefficients).    

\begin{remark}
The linearity assumption on the diffusion coefficients of the fBm is used in 
\eqref{approximation ode}  and \eqref{equality_1} which is critical to transform 
\eqref{e.2.2} to  \eqref{e.2.7}, the 
equations without driving fBm.  If the term is more general, such as
$A(X_t) dB_t$, for example, if we consider $dX_t=b(t, X_t) dt +\sum_{j=1}^{k_1}\si_j (t, X_t) dW_t^j+\sum_{j=1}^{m_1} A_j(X_t) dB_t^j$, we may still consider using the transformation 
$d\Ga_t=-\sum_{j=1}^{m_1} \tilde A_j(\Ga_t) dB_t^j$. However, in this case \eqref{equality_1}
becomes 
\begin{equation}  
\begin{aligned}
 \left(\Gamma _t\right)^{-1}d\big[\Gamma _tX _t\big]=&dX _t-\sum_{j=1}^{m_1} \left(\Gamma _t\right)^{-1}\tilde A_j(\Ga_t)X_t dB_t^j \\
 =&b(t,X _t ) dt +\sum_{j=1}^{k_1}\sigma_j(t,X _t ) dW^j_t+\sum_{j=1}^{m_1} \left[A_j(X_t)-\left(\Gamma _t\right)^{-1}\tilde A_j(\Ga_t)X_t\right] dB_t^j \,.   
\end{aligned} 
\end{equation}
The only case we are able to make the coefficient $dB_t^j$ disappear 
is when  $A_j(X_t)=\left(\Gamma _t\right)^{-1}\tilde A_j(\Ga_t)X_t$ or $A_j(X_t)=A_j(t) X_t$ for some $  A_j ( t ) $, and in this case we take $\tilde A_j(\Ga_t)=\Ga_tA_j(t)$. 
However, there may be some other way towards a solution and the problem is very much worthy further investigation. 
\end{remark}

To make the above transformation legitimate,  we need  to solve \eqref{approximation ode}. Now, we give the representation of solution in the nilpotent case (see assumption {\bf (H1)}).

 we shall solve \eqref{approximation ode} by using  the  generalized Campbell-Baker-Hausdorff-Dynkin formula in \cite{Strichartz87} (more related literature, see \cite{BZ12},
 \cite{Huformal}, \cite{Kunita80}), that is,

\begin{equation}
\Gamma^{\vare}_t=\exp \left\{\mathcal{K}^{\vare}_t\right\}\,, 
  \qquad \Lambda^{\vare}_t=\exp \left\{\chi^{\vare}_t\right\}\,.
\end{equation}
Here 
\begin{equation}\label{w^epsilon}
\begin{aligned}
\mathcal{K}^{\vare}_t=&\sum^{\infty}_{n=1}\sum_{(i_1,\dots,i_n)}\sum_{\sigma\in\mathcal{S}_n}\frac{(-1)^{e(\sigma)+n}}{n^2\binom{n-1}{e(\sigma)}}\int_{0< s_1<\dots< s_n< t}[\dots [A_{i_1}(s_1),A_{i_2}(s_2)]\dots],A_{i_n}(s_n)]\\
&\times dB^{\sigma^{-1}(i_1),\vare}_{s_1}dB^{\sigma^{-1}(i_2),\vare}_{s_2}\dots dB^{\sigma^{-1}(i_n),\vare}_{s_n}
\end{aligned}
\end{equation}
and
\begin{equation}\label{w^epsilon2}
\begin{aligned}
\chi^{\vare}_t=&\sum^{\infty}_{n=1}\sum_{(i_1,\dots,i_n)}\sum_{\sigma\in\mathcal{S}_n}\frac{(-1)^{e(\sigma)+n}}{n^2\binom{n-1}{e(\sigma)}}\int_{0< t_1<\dots< t_n< t}[\dots [C_{i_1}(t_1),C_{i_2}(t_2)]\dots],C_{i_n}(t_n)]\\
&\times d\tilde{B}^{\sigma^{-1}(i_1),\vare}_{t_1}d\tilde{B}^{\sigma^{-1}(i_2),\vare}_{t_2}\dots d\tilde{B}^{\sigma^{-1}(i_n),\vare}_{t_n},
\end{aligned}
\end{equation}
where  $\mathcal{S}_n$ is the set of all  permutations of $\{0,\dots,n\}$ and if $\sigma\in\mathcal{S}_n$, $e(\sigma)$ is the cardinality of the set $\{j\in\{1,\dots, n-1\},\sigma(j)>\sigma(j+1)\}$.    Notice that the summation in \eqref{w^epsilon} and \eqref{w^epsilon2} 
are finite due to the nilpotent assumption.  

In the next two sections, we aim to obtain the convergences of solutions $(\Gamma^{\vare},\Lambda^{\vare})$  of  equation \eqref{approximation ode} and multiple integrals in \eqref{w^epsilon}, \eqref{w^epsilon2}, respectively. To this end, we divide our discussion into  the following two cases, $H\in(1/2,1)$ and $H\in(1/3,1/2)$, and  obtain the desired results with the  Young's and rough path's methods, respectively.

\section{The convergence in the Young's framework}
In this section, we assume   $1/2<H<1 $ and    show that the  solutions $\Gamma^{\vare},\Lambda^{\vare}$ to the ODEs  \eqref{approximation ode}
converge  to the solutions of the following  differential equations in the sense of Young:
\begin{equation}
\left\{
\begin{aligned}
&d\Gamma_t=-\sum^{m_1}_{j=1}\Gamma_tA_{j}(t)\circ dB^{j}_t,\\
&d\Lambda_t=-\sum^{m_2}_{j=1}\Lambda_tC_{j}(t)\circ d\tilde{B}^{j}_t\,.  
 \end{aligned}
\right. \label{e.3.1} 
\end{equation}

Now we give the Wong-Zakai type limit theorem in the sense of Young. 
\begin{proposition}\label{p.4.1}
Let $\alpha\in(1/2,1]$ and $f\in C_b^2$. Let $X,\tilde{X}\in\cC^\alpha$ and $y,\tilde{y}\in\RR^n$, and let $Y,\tilde{Y}$ be the unique solutions of 
\begin{equation}
Y_t=y+\int_0^tf(Y_s)dX_s,\ t\in[0,T]
\end{equation} 
with the data $(y,X)$ and $(\tilde{y},\tilde{X})$, respectively. Let $K>0$ be a constant such that $\Vert X\Vert_\alpha,\Vert \tilde{X}\Vert_\alpha\leq K$. Then, for any $\tilde{\alpha}\in(1/2,\alpha)$, there exists a constant $C_K>0$, depending on $\tilde{\alpha},T,\Vert f\Vert_{C_b^2}$ and $K$, such that
\begin{equation}
\Vert Y-\tilde{Y}\Vert_{\tilde{\alpha}}\leq C_K(|y-\tilde{y}|+\Vert X-\tilde{X}\Vert_{\tilde{\alpha}}).
\end{equation} 
\end{proposition}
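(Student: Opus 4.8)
The plan is to prove the estimate in three stages: a uniform a priori bound on the solutions, a local stability estimate on small intervals, and a patching/propagation argument that turns the local bounds into the global one, all while keeping every constant dependent only on the quantities allowed in the statement. The only analytic input is the elementary Young estimate: for $2\alpha>1$ and $g\in\cC^\alpha$,
\[
\Big|\int_s^t g\,dX - g_s\,\delta X_{s,t}\Big|\ \le\ C\,\|g\|_{\alpha;[s,t]}\,\|X\|_{\alpha;[s,t]}\,|t-s|^{2\alpha},
\]
which is the deterministic specialization of \eqref{bounds} in Theorem~\ref{p12} (the constant $C$ depends only on $\alpha$).

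\emph{Step 1 (a priori bounds).} Applying the Young estimate with $g_r=f(Y_r)$, together with $\|f(Y)\|_\infty\le\|f\|_\infty$ and $\|f(Y)\|_{\alpha;[s,t]}\le\|f\|_{C^1}\|Y\|_{\alpha;[s,t]}$, gives
\[
|\delta Y_{s,t}|\ \le\ \|f\|_\infty K\,|t-s|^\alpha + C\|f\|_{C^1}K\,\|Y\|_{\alpha;[s,t]}\,|t-s|^{2\alpha}.
\]
Choosing $h=h(K,\|f\|_{C^1},\alpha)$ so that $C\|f\|_{C^1}Kh^\alpha\le \tfrac12$ yields $\|Y\|_{\alpha;[a,a+h]}\le 2\|f\|_\infty K$ on every interval of length $h$, and a standard concatenation of Hölder seminorms over the $\lceil T/h\rceil$ subintervals produces a bound $\|Y\|_{\alpha;[0,T]}\le C_K$; since $\tilde\alpha<\alpha$ this gives $\|Y\|_{\tilde\alpha;[0,T]}\le T^{\alpha-\tilde\alpha}C_K$, and likewise for $\tilde Y$. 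From now on $C_K$ denotes a constant that may change from line to line but depends only on $\tilde\alpha,T,\|f\|_{C_b^2},K$.

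\emph{Step 2 (local stability).} Put $D=Y-\tilde Y$ and write $f(Y_r)-f(\tilde Y_r)=g_rD_r$ with $g_r=\int_0^1 f'(\tilde Y_r+\theta D_r)\,d\theta$, so that
\[
\delta D_{s,t}=\int_s^t g_rD_r\,dX_r+\int_s^t f(\tilde Y_r)\,d(X-\tilde X)_r.
\]
For the second integral the Young estimate and $\|f(\tilde Y)\|_{\tilde\alpha;[0,T]}\le C_K$, $\|X-\tilde X\|_{\tilde\alpha;[0,T]}=\|X-\tilde X\|_{\tilde\alpha}$ give a contribution $\le C_K\|X-\tilde X\|_{\tilde\alpha}|t-s|^{\tilde\alpha}$. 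For the first integral one uses $\|g\|_\infty\le\|f\|_{C^1}$, $\|g\|_{\tilde\alpha;[s,t]}\le\|f\|_{C^2}(\|Y\|_{\tilde\alpha;[s,t]}+\|\tilde Y\|_{\tilde\alpha;[s,t]})\le C_K$, hence $\|gD\|_{\tilde\alpha;[s,t]}\le\|f\|_{C^1}\|D\|_{\tilde\alpha;[s,t]}+C_K\|D\|_{\infty;[s,t]}$, together with $\|X\|_{\tilde\alpha}\le C_K$ and the Young estimate. After dividing by $|t-s|^{\tilde\alpha}$, taking the supremum over $s,t\in[a,a+h]$, and bounding $\|D\|_{\infty;[a,a+h]}\le|D_a|+\|D\|_{\tilde\alpha;[a,a+h]}h^{\tilde\alpha}$, one arrives at
\[
\|D\|_{\tilde\alpha;[a,a+h]}\ \le\ C_K\|X-\tilde X\|_{\tilde\alpha}+C_K|D_a|+C_Kh^{\tilde\alpha}\,\|D\|_{\tilde\alpha;[a,a+h]}.
\]
Shrinking $h$ (depending only on $C_K$) so that $C_Kh^{\tilde\alpha}\le\tfrac12$ absorbs the last term, giving $\|D\|_{\tilde\alpha;[a,a+h]}\le 2C_K(\|X-\tilde X\|_{\tilde\alpha}+|D_a|)$, and hence also $|D_{a+h}|\le|D_a|+\|D\|_{\tilde\alpha;[a,a+h]}h^{\tilde\alpha}\le 2|D_a|+\|X-\tilde X\|_{\tilde\alpha}$.

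\emph{Step 3 (propagation and obstacle).} Iterating the last inequality over the $N=\lceil T/h\rceil$ intervals gives $\max_{0\le a\le T}|D_a|\le 2^{N}\big(|D_0|+\|X-\tilde X\|_{\tilde\alpha}\big)$, so $\max_i\|D\|_{\tilde\alpha;[t_i,t_{i+1}]}\le C_K\big(|y-\tilde y|+\|X-\tilde X\|_{\tilde\alpha}\big)$; concatenating these local Hölder bounds into a global one on $[0,T]$ — split an arbitrary increment $\delta D_{s,t}$ into the at most $N+1$ pieces cut out by the partition, handling $|t-s|\ge h$ (where the number of pieces is $\lesssim (t-s)/h$, producing a factor $h^{\tilde\alpha-1}T^{1-\tilde\alpha}$) and $|t-s|<h$ (at most two pieces, with $a^{\tilde\alpha}+b^{\tilde\alpha}\le 2(a+b)^{\tilde\alpha}$) separately — yields the claimed estimate with $C_K$ depending only on $\tilde\alpha,T,\|f\|_{C_b^2},K$. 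Existence and uniqueness of $Y,\tilde Y$ are assumed in the statement (they are standard for Young equations with $f\in C_b^2$ and $\alpha>1/2$, cf.\ \cite{Hu13}). I expect the only real difficulty to be bookkeeping: ensuring that the interval length $h$ and all the local and global constants depend solely on the permitted quantities and, crucially, are uniform over the whole ball $\{X:\|X\|_\alpha\le K\}$; this is also why the a priori bound of Step~1 must be established first, since the Lipschitz and second-order estimates for $f(Y)-f(\tilde Y)$ in Step~2 rely on the $\tilde\alpha$-Hölder norms of both $Y$ and $\tilde Y$ being under control.
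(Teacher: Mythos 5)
Your proof is correct, but it is a genuinely different route from the paper's: the paper does not prove the proposition at all, it simply invokes \cite[Theorem 4]{HN} (which gives the $\tilde\alpha=0$ case) together with the $\tilde\alpha$-H\"older bound appearing on p.~409 of that proof, whereas you reconstruct the standard stability argument for Young equations from scratch. Your three-step scheme (uniform a priori bound via the local absorption $C\|f\|_{C^1}Kh^{\alpha}\le\tfrac12$; linearization $f(Y)-f(\tilde Y)=gD$ with $g_r=\int_0^1f'(\tilde Y_r+\theta D_r)\,d\theta$ and a local contraction in $\|D\|_{\tilde\alpha}$; discrete Gronwall propagation $|D_{a+h}|\le 2|D_a|+\|X-\tilde X\|_{\tilde\alpha}$ followed by concatenation of local seminorms) is exactly the content one would find inside the cited reference, and all the estimates check out: the bound $\|g\|_{\tilde\alpha}\le\|f\|_{C^2}(\|Y\|_{\tilde\alpha}+\|\tilde Y\|_{\tilde\alpha})$ uses $\tilde Y_r+\theta D_r=(1-\theta)\tilde Y_r+\theta Y_r$, the term $\|g\|_\infty\|D\|_\infty\|X\|_{\tilde\alpha}$ coming from $g_sD_s\,\delta X_{s,t}$ carries no small factor but is correctly split via $\|D\|_{\infty;[a,a+h]}\le|D_a|+h^{\tilde\alpha}\|D\|_{\tilde\alpha;[a,a+h]}$, and the second Young integral is legitimately measured against $\|X-\tilde X\|_{\tilde\alpha}$ since $2\tilde\alpha>1$. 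What your approach buys is self-containedness and explicit tracking of how $C_K$ depends on $\tilde\alpha,T,\|f\|_{C_b^2},K$ (uniform over the ball $\|X\|_\alpha\le K$, which is the point you rightly flag as the only delicate bookkeeping); what the paper's citation buys is two lines. Either is acceptable here.
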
 
\begin{proof} When $\tilde{\alpha}$ on the left hand side is $0$, it is a consequence of 
\cite[Theorem 4]{HN}. For the theorem of present form, we can combine the statement of \cite[Theorem 4]{HN}
with the bound on $\Vert Y-\tilde{Y}\Vert_{\tilde{\alpha}}$ on page 409 in  the proof of that theorem. 
\end{proof}
\begin{theorem}\label{Young WZ}
Let $B=(B^{1},B^{2},\cdots,B^{m_1})\in\mathbb{R}^{m_1}$ and $\tilde{B}=(\tilde{B}^{1},\tilde{B}^{2},\cdots,\tilde{B}^{m_2})\in\mathbb{R}^{m_2}$ be two fBms with $H\in(1/2,1)$, 
and let $B^{\vare},\tilde{B}^{\vare}$ be the dyadic piecewise linear approximations of $B,\tilde{B}$, respectively. Then the solutions $\Gamma^{\vare}$ and $\Lambda^{\vare}$  to the equations 
\eqref{approximation ode}
converge to the Stratonovich solutions  of  \eqref{e.3.1}
\end{theorem}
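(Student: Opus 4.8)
The plan is to derive this Wong--Zakai statement from the pathwise stability estimate of Proposition~\ref{p.4.1}. It suffices to treat $\Gamma^\vare$, since the argument for $\Lambda^\vare$ is word-for-word the same with $(A_j,B^j,n)$ replaced by $(C_j,\tilde B^j,k_2)$. As the $A_j$ are time dependent while Proposition~\ref{p.4.1} concerns autonomous equations, the first step is to append the time variable to the driver: put $\hat B^\vare_t:=(B^{1,\vare}_t,\dots,B^{m_1,\vare}_t,t)$ and $\hat B_t:=(B^1_t,\dots,B^{m_1}_t,t)$, both in $\cC^\alpha([0,T];\RR^{m_1+1})$ for the fixed $\alpha\in(1/2,H)$, extend each $A_j$ to a bounded $C^2$ function on $\RR$, and let $F$ be the linear coefficient that acts on an increment $(b^1,\dots,b^{m_1},r)$ by $F(\gamma,z)(b,r):=\big(-\sum_{j=1}^{m_1}\gamma\,A_j(z)\,b^j,\ r\big)$. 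Then $(\Gamma^\vare_t,t)$ solves the autonomous Young equation $dM_t=F(M_t)\,d\hat B^\vare_t$ with $M_0=(\mathrm{Id},0)$ in the sense of Theorem~\ref{p12}, and $(\Gamma_t,t)$ solves the same equation driven by $\hat B$; since for $H>1/2$ the Young integral coincides with the Stratonovich integral, this last equation is precisely \eqref{e.3.1} and $\Gamma$ is its Stratonovich solution. Note also that $\Vert\hat B^\vare-\hat B\Vert_{\tilde\alpha;[0,T]}=\Vert B^\vare-B\Vert_{\tilde\alpha;[0,T]}$ for every $\tilde\alpha$, the time components being equal.

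The obstruction to applying Proposition~\ref{p.4.1} verbatim is that $F$, though $C^2$ with bounded derivatives in $z$ by \textbf{(H3)}, is linear and hence unbounded in $\gamma$, so $F\notin C_b^2$. I would remove this by localization. For almost every $\omega_2$ one has, because dyadic piecewise linear interpolation is bounded on H\"older spaces uniformly in the mesh and because fBm with $H>1/2$ is a.s. $\alpha$-H\"older, the uniform bound $\sup_\vare\Vert\hat B^\vare\Vert_{\alpha;[0,T]}\le C_\alpha\Vert\hat B\Vert_{\alpha;[0,T]}<\infty$, while $B^\vare\to B$ uniformly on $[0,T]$ and hence, by interpolation, $\Vert\hat B^\vare-\hat B\Vert_{\tilde\alpha;[0,T]}\to0$ as $\vare\to0$ for each $\tilde\alpha\in(1/2,\alpha)$. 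Next I would establish a uniform a priori bound $\sup_\vare\sup_{t\in[0,T]}|\Gamma^\vare_t|\le M(\omega_2)<\infty$, with the same bound for $\Gamma$: this follows either from the explicit representation $\Gamma^\vare_t=\exp\{\mathcal{K}^\vare_t\}$ of \eqref{w^epsilon}, which by the nilpotency in \textbf{(H1)} is a \emph{finite} sum of iterated Young integrals of $B^\vare$, each bounded by $C_{N_0}(1+\Vert\hat B^\vare\Vert_\alpha)^{N_0}$, or from a direct Gr\"onwall-type estimate for the linear Young ODE.

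Granting the a priori bound, fix such an $\omega_2$, pick $\psi\in C_b^\infty$ with $\psi\equiv1$ on $\{|(\gamma,z)|\le M+T\}$ and $\psi\equiv0$ outside a larger ball, and set $F_\psi:=\psi F\in C_b^2$. Since $(\Gamma^\vare_\cdot,\cdot)$ and $(\Gamma_\cdot,\cdot)$ never leave $\{|(\gamma,z)|\le M+T\}$, they are also the unique solutions of the truncated equations with coefficient $F_\psi$ driven respectively by $\hat B^\vare$ and $\hat B$. Proposition~\ref{p.4.1}, applied with $f=F_\psi$, common initial datum $(\mathrm{Id},0)$, exponents $\tilde\alpha<\alpha$, and $K:=\sup_\vare\big(\Vert\hat B^\vare\Vert_{\alpha;[0,T]}\vee\Vert\hat B\Vert_{\alpha;[0,T]}\big)<\infty$, then gives $\Vert\Gamma^\vare-\Gamma\Vert_{\tilde\alpha;[0,T]}\le C_K\Vert\hat B^\vare-\hat B\Vert_{\tilde\alpha;[0,T]}\to0$. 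Running the identical argument for $\Lambda^\vare$ yields $\Lambda^\vare\to\Lambda$ in $\cC^{\tilde\alpha}$, a.s. in $\omega_2$, which is the claim. The step I expect to be the main obstacle is precisely what makes the localization legitimate: producing the uniform-in-$\vare$ a priori bound for the \emph{linear} equation $\Gamma^\vare$ (where the usual $C_b^2$ arguments do not apply off the shelf) together with the H\"older-norm, not merely uniform, convergence of the dyadic interpolations; once these are in hand the theorem drops out of Proposition~\ref{p.4.1}.
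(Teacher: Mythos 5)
Your proof is correct and follows essentially the same route as the paper: almost-sure H\"older convergence of the dyadic interpolations (obtained, as in the paper, by interpolating a uniform-in-$\vare$ H\"older bound against uniform convergence) followed by continuity of the Young solution map from Proposition~\ref{p.4.1}. The time-augmentation and localization you add to handle the time-dependent, linearly growing coefficient are exactly the details the paper elides with ``it is not hard to prove,'' and you handle them correctly (the uniform a priori bound on $\Gamma^\vare$ follows from either of the two routes you name).
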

\begin{proof}
First,  we prove that $B^{\vare}$ converges almost surely to $B$ in $\mathcal{C}^\alpha([0,T];\mathbb{R}^{m_1})$. Indeed, fix $0<s<t<T$. 
We have for any $\beta\in (0, 1)$ satisfying  
$\beta>\beta H>\al$,   
\begin{equation}
\frac{|B_t^{\vare}-B_t-(B_s^{\vare}-B_s)|}{|t-s|^\al}=\frac{|B_t^{\vare}-B_t-(B_s^{\vare}-B_s)|^\beta}{|t-s|^\al} |B_t^{\vare}-B_t-(B_s^{\vare}-B_s)|^{1-\beta}.\label{e.4.4aa} 
\end{equation}
(i) When  $t-s>\frac{T}{2^k}$, we see  $s\in[t_l,t_{l+1}]$ and $t\in[t_m,t_{m+1}]$
 ($l<m$)  and 
\begin{equation*}
\begin{aligned}
&\bigg(\frac{|B_t^{\vare}-B_t-(B_s^{\vare}-B_s)|}{|t-s|^{\al/\beta}}\bigg)^\beta\leq(\frac{2^{k}}{T})^\al[|B_t^{\vare}-B_t|+|B_s^{\vare}-B_s)|]^\beta\\
&\quad\leq(\frac{2^{k}}{T})^\al[|B_{t _{m}}+(t-t _{m})\frac{2^k}{T}(B_{t_{m+1}}-B_{t_{m}})-B_t|+|B_{t _{l}}+(s-t _{l})\frac{2^k}{T}(B_{t_{l+1}}-B_{t_{l}})-B_s|]^\beta\\
&\quad\leq (\frac{2^k}{T})^\al\bigg[\frac{|B_{t _{m}}-B_t|}{|t_m-t|^{\al/\beta}}+\frac{|B_{t _{m+1}}-B_{t_m}|}{|t_{m+1}-t_m|^{\al/\beta}}+\frac{|B_{t _{l}}-B_s|}{|t_l-s|^{\al/\beta}}+\frac{|B_{t _{l+1}}-B_{t_l}|}{|t_{l+1}-t_l|^{\al/\beta}}\bigg]^\beta(\frac{T}{2^k})^\al\leq C_\beta.
\end{aligned}
\end{equation*}
(ii) When  $|t-s|<\frac{T}{2^k}$,   we divide our discussion into  two cases. First, when $s,t\in[t_l,t_{l+1}]$ are in the same subinterval, we have
\begin{equation*}
\begin{aligned}
\bigg(\frac{|B_t^{\vare}-B_t-(B_s^{\vare}-B_s)|}{|t-s|^{\al/\beta}}\bigg)^\beta&\leq\bigg[\frac{|B_t-B_s)|}{|t-s|^{\al/\beta}}+\frac{|B_t^{\vare}-B_s^{\vare})|}{|t-s|^{\al/\beta}}\bigg]^\beta\\
&\leq \bigg[C+\frac{\frac{2^k}{T}|t-s||B_{t_{l+1}}-B_{t_l})|}{|t-s|^{\al/\beta}}\bigg]^\beta\\
&\leq \bigg[C+\frac{2^k}{T}|t-s|^{1-\al/\beta}\frac{|B_{t_{l+1}}-B_{t_l})|}{|t_{l+1}-t_l|^{\al/\beta}}(\frac{T}{2^k})^{\al/\beta}\bigg]^\beta\\
&\leq \bigg[C+C\frac{2^k}{T}(\frac{T}{2^k})^{1-\al/\beta}(\frac{T}{2^k})^{\al/\beta}\bigg]^\beta\leq C_\beta.
\end{aligned}
\end{equation*}
Next,  when $s,t$ are in  two adjacent intervals, i.e. $s\in[t_{l-1},t_l]$ and $t\in[t_l,t_{l+1}]$, we have
\begin{equation*}
\begin{aligned}
&\bigg(\frac{|B_t^{\vare}-B_t-(B_s^{\vare}-B_s)|}{|t-s|^{\al/\beta}}\bigg)^\beta\leq\bigg[\big|B_{t_l}+(t-t_l)\frac{2^k}{T}(B_{t_{l+1}}-B_{t_l})-B_t\\
&\qquad-(B_{t_l}+(s-t_l)\frac{2^k}{T}(B_{t_{l-1}}-B_{t_l})-B_s)\big|/|t-s|^{\al/\beta}\bigg]^\beta\\
&\leq\bigg[\big|B_t-B_s\big|+(t-t_l)\frac{2^k}{T}\big|B_{t_{l+1}}-B_{t_l}\big|+(t_l-s)\frac{2^k}{T}|B_{t_{l-1}}-B_{t_l}|/|t-s|^{\al/\beta}\bigg]^\beta\\
&\leq|t-s|^{-\al}\bigg[\frac{\big|B_t-B_s\big|}{|t-s|^{\al/\beta}}|t-s|^{\al/\beta}+(t-s)\frac{2^k}{T}\bigg(\frac{\big|B_{t_{l+1}}-B_{t_l}\big|}{|t_{l+1}-t_l|^{\al/\beta}}\\
&\qquad\times|t_{l+1}-t_l|^{\al/\beta}+\frac{|B_{t_{l-1}}-B_{t_l}|}{|t_l-t_{l-1}|^{\al/\beta}}|t_{l}-t_{l-1}|^{\al/\beta}\bigg)\bigg]^\beta\\
&\leq|t-s|^{-\al}\bigg[C|t-s|^{\al/\beta}+(t-s)\frac{2^k}{T}\bigg(C|t_{l+1}-t_l|^{\al/\beta}+C|t_{l}-t_{l-1}|^{\al/\beta}\bigg)\bigg]^\beta\\
&\leq C_\beta+C_\beta|t-s|^{\beta-\al}(\frac{2^k}{T})^\beta(\frac{T}{2^k})^{\al}\leq C_\beta.
\end{aligned}
\end{equation*}
  So by \eqref{e.4.4aa} we conclude  
\begin{equation}
\begin{aligned}
&\frac{|B_t^{\vare}-B_t-(B_s^{\vare}-B_s)|}{|t-s|^\al}\le C_\beta (|B_t^{\vare}-B_t|+|B_s^{\vare}-B_s)|)^{1-\beta},
\end{aligned}
\end{equation}
and then
\begin{equation*}
\begin{aligned}
\Vert B^{\vare}(\om_2)-B(\om_2)\Vert_{\alpha;[0,T]}&\le C_\beta\sup_{0<s<t<T} (|B_t^{\vare}-B_t|+|B_s^{\vare}-B_s|)^{1-\beta}\rightarrow0,\ a.s.\  \omega_2\in\Omega_2.
\end{aligned}
\end{equation*}
Now,  it is not hard to prove that $\Gamma^{\vare}\rightarrow\Gamma$ with the fact that the solution map $B^{\vare}\mapsto\Gamma^{\vare}$ is continuous in the sense of Young theory  (Proposition \ref{p.4.1}). Similar convergence result can be obtained for $(\tilde{B}^{\vare},\Lambda^{\vare})$.
\end{proof}
Next,  we show that  the multiple pathwise integral
\begin{equation}
\mathcal{S}_{n,t}(f_n)(B ):=\int_{0<t_1<\cdots<t_n<t}f_n(t_1,\cdots,t_n)dB^{\sigma^{-1}(i_1)}_{t_1}\cdots dB^{\sigma^{-1}(i_n)}_{t_n}
\end{equation}
  is the almost surely convergence limit of 
\begin{equation}
\mathcal{S}_{n,t}(f_n)(B^{\vare}):=\int_{0<t_1<\cdots<t_n<t}f_n(t_1,\cdots,t_n)dB^{\sigma^{-1}(i_1),\vare}_{t_1}\cdots dB^{\sigma^{-1}(i_n),\vare}_{t_n}\,,
\end{equation}
where the integral is in the pathwise sense which contributes to the explicit form of solution and hence guarantee the existence of solution to equation \eqref{e.3.1} and
\begin{equation}
f_n(t_1,t_2,\dots,t_n)=[[\dots [A_{i_1}(t_1),A_{i_2}(t_2)]\dots],A_{i_n}(t_n)],\ t_1< t_2<\dots< t_n.
\end{equation}
We also define
 \begin{equation}
g_n(t_1,t_2,\dots,t_n)=[[\dots [C_{i_1}(t_1),C_{i_2}(t_2)]\dots],C_{i_n}(t_n)],\ t_1< t_2<\dots< t_n.
\end{equation}

%

\begin{theorem}\label{multiconvergelt0.5}
Let {\bf (H1)} hold and $H\in (1/2, 1)$. Let $B^{\vare},\tilde{B}^{\vare}$ be the dyadic piecewise linear smooth approximations of $B,\tilde{B}$, respectively.
Then
\begin{equation}\label{multi converge3}
\begin{aligned}
&\mathcal{S}_{n,t}(f_n)(B^{\vare})\rightarrow \mathcal{S}_{n,t}(f_n)(B),\ a.s.\ \omega_2\in\Omega_2,\\
&\mathcal{S}_{n,t}(g_n)(\tilde{B}^{\vare})\rightarrow \mathcal{S}_{n,t}(g_n)(\tilde{B}),\ a.s.\ \omega_2\in\Omega_2\,. 
\end{aligned}
\end{equation}
\end{theorem}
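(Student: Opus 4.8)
The plan is to make the whole statement pathwise in $\om_2$ and reduce it to the (classical) continuity of iterated Young integration in H\"older norm. Fix $\al\in(1/2,H)$. By the estimate established in the proof of Theorem \ref{Young WZ}, there is a $\PP$-null set outside of which $\Vert B^\vare(\om_2)-B(\om_2)\Vert_{\al;[0,T]}\to0$ (and likewise $\Vert\tilde B^\vare(\om_2)-\tilde B(\om_2)\Vert_{\al;[0,T]}\to0$); in particular $K(\om_2):=\sup_\vare\big(\Vert B^\vare(\om_2)\Vert_{\al}\vee\Vert B(\om_2)\Vert_\al\big)<\infty$. For such an $\om_2$ everything in sight is deterministic (the $f_n,g_n$ are deterministic and built from the smooth functions $A_j,C_j$), so it suffices to prove the purely deterministic claim: for each $n\le N_0$ and each index string, the iterated-integral functional $X\mapsto\big(t\mapsto\mathcal{S}_{n,t}(f_n)(X)\big)$ maps $\cC^\al([0,T];\RR^{m_1})$ into $\cC^\al([0,T])$ and is Lipschitz on $\{\Vert X\Vert_\al\le K\}$, with a constant depending only on $n,\al,T,K$ and the $\cC^1([0,T])$-norms of the $A_j$ (these bound all the sup- and Lipschitz-norms of the iterated commutator $f_n$ on $[0,T]^n$). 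Applying this with $X=B^\vare(\om_2)$ and $X=B(\om_2)$ and letting $\vare\to0$ gives the first line of \eqref{multi converge3}, and the $g_n,\tilde B$ case is identical.

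To prove the deterministic claim I would induct on $n$, peeling off the innermost integration. For $0\le k\le n$ and a matrix-valued integrand $\phi$ that is $\al$-H\"older in $k$ ``active'' variables and Lipschitz in the remaining ``frozen'' ones, set
\begin{equation*}
\big(T_k^{\phi}X\big)(t_{k+1},\dots,t_n):=\int_{0<t_1<\cdots<t_k<t_{k+1}}\phi(t_1,\dots,t_n)\,dX^{\sigma^{-1}(i_1)}_{t_1}\cdots dX^{\sigma^{-1}(i_k)}_{t_k},
\end{equation*}
so that $\mathcal{S}_{n,t}(f_n)(X)=\big(T_n^{f_n}X\big)(t)$. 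The induction hypothesis is that $T_k^{\phi}X$ is jointly $\al$-H\"older in $(t_{k+1},\dots,t_n)$ with norm controlled by $\Vert X\Vert_\al$ and the norms of $\phi$, and that $X\mapsto T_k^{\phi}X$ and $\phi\mapsto T_k^{\phi}X$ are locally Lipschitz; the step $k-1\to k$ is a single Young integral of an $\al$-H\"older integrand against an $\al$-H\"older path with $\al+\al>1$, for which the basic bound
\begin{equation*}
\Big|\int_s^t\psi\,dX-\psi_s\,\delta X_{s,t}\Big|\le C_\al\,\Vert\psi\Vert_{\al;[s,t]}\,\Vert X\Vert_{\al;[s,t]}\,|t-s|^{2\al}
\end{equation*}
(the deterministic content of Theorem \ref{p12} and Lemma \ref{sewing lemma}, cf.\ also \cite{HN}) together with its bilinearity in $(\psi,X)$ delivers both the H\"older estimate and the Lipschitz dependence, the differentiation in the frozen variables costing only $\sup_j\Vert A_j'\Vert_\infty$. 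A more compact alternative is to notice that the finite vector $\big(T_1^{f_n}X,\dots,T_n^{f_n}X\big)$ solves a triangular system of linear Young ODEs driven by the enlarged path $\hat X:=(t,X)$ with smooth coefficients, so that, since $\hat X^\vare=(t,B^\vare)\to(t,B)=\hat X$ in $\cC^\al$, the conclusion follows from the continuity of the Young solution map in Proposition \ref{p.4.1} (after a routine localization to absorb the linear growth of the coefficients).

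I expect the only real point to watch is the bookkeeping inside the induction: at stage $k$ the integrand depends on the current integration variable \emph{and} on all the not-yet-integrated ones, so one must carry \emph{joint} H\"older control (not just regularity in the single active variable) in order for the next Young integral to be legitimate — this is exactly where the smoothness-with-bounded-derivatives of $A_j,C_j$ enters, and in the regime $\al>1/2$ it is entirely routine. In short, the essential work has already been done by Theorem \ref{Young WZ} (the a.s.\ uniform $\al$-H\"older bound on $B^\vare$), and unlike the $H<1/2$ analysis of Section 5 there is no genuine analytic obstacle here.
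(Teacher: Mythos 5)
Your proposal is correct and follows essentially the same route as the paper: reduce to a pathwise statement, invoke the a.s.\ convergence $\Vert B^\vare-B\Vert_{\al;[0,T]}\to0$ from Theorem \ref{Young WZ}, and conclude by the local Lipschitz continuity of the iterated Young integral $X\mapsto\mathcal{S}_{n,t}(f_n)(X)$ on H\"older balls. The only difference is that the paper obtains this Lipschitz estimate by citing \cite[Theorem 3.4-(iii)]{Hu13}, whereas you sketch a self-contained inductive proof of it via the sewing bound (with the correct caveat about carrying joint H\"older control in the not-yet-integrated variables); both are valid.
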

\begin{proof}
By {\bf (H1)}, it is straightforward to  obtain 
\begin{equation}
\begin{aligned}
&|\mathcal{S}_{n,t}(f_n)(B^{\vare})-\mathcal{S}_{n,t}(f_n)(B)|\leq C_n\sum_{r=1}^n\Vert B^{\sigma^{-1}(i_r),\vare}-B^{\sigma^{-1}(i_r)}\Vert_{\alpha;[0,T]},\ a.s.,\\
&|\mathcal{S}_{n,t}(g_n)(\tilde{B}^{\vare})-\mathcal{S}_{n,t}(g_n)(\tilde{B})|\leq C_n\sum_{r=1}^n\Vert\tilde{B}^{\sigma^{-1}(i_r),\vare}-\tilde{B}^{\sigma^{-1}(i_r)}\Vert_{\alpha;[0,T]},\ a.s.,
\end{aligned}
\end{equation}
according to Theorem 3.4-(iii) in \cite{Hu13}. Then similar to the proof in Theorem \ref{Young WZ}, we also obtain that $B^{\sigma^{-1}(i_r),\vare}, \tilde{B}^{\sigma^{-1}(i_r),\vare}$ converge almost surely to $B^{\sigma^{-1}(i_r)}, \tilde{B}^{\sigma^{-1}(i_r)}$ in $\mathcal{C}^\alpha([0,T];\mathbb{R}^{m_1}(\mathbb{R}^{m_2}))$, respectively. it is not hard to get the desired convergence result \eqref{multi converge3}. Therefore, the proof is complete.
\end{proof}

With the help of the Theorem \ref{multiconvergelt0.5}, we obtain the following theorem.
\begin{theorem}
Under nilpotent case, i.e. assumption {\bf (H1)}, 
the solution to \eqref{e.3.1} can be written as
\begin{equation}
\Gamma _t=\exp \left\{\mathcal{K} _t\right\}
  \qquad \hbox{and}\quad \Lambda _t=\exp \left\{\chi _t\right\}\,, 
\end{equation}
where 
\begin{equation}
\begin{aligned}
\mathcal{K} _t=&\sum^{N_0}_{n=1}\sum_{(i_1,\dots,i_n)}\sum_{\sigma\in\mathcal{S}_n}\frac{(-1)^{e(\sigma)+n}}{n^2\binom{n-1}{e(\sigma)}}\int_{0< s_1<\dots< s_n< t}[\dots [A_{i_1}(s_1),A_{i_2}(s_2)]\dots],A_{i_n}(s_n)]\\
&\times dB^{\sigma^{-1}(i_1) }_{s_1}dB^{\sigma^{-1}(i_2) }_{s_2}\dots dB^{\sigma^{-1}(i_n) }_{s_n}\\
\end{aligned}
\end{equation}
and
\begin{equation}
\begin{aligned}
\chi  _t=&\sum^{N_0}_{n=1}\sum_{(i_1,\dots,i_n)}\sum_{\sigma\in\mathcal{S}_n}\frac{(-1)^{e(\sigma)+n}}{n^2\binom{n-1}{e(\sigma)}}\int_{0< t_1<\dots< t_n< t}[\dots [C_{i_1}(t_1),C_{i_2}(t_2)]\dots],C_{i_n}(t_n)]\\
&\times d\tilde{B}^{\sigma^{-1}(i_1) }_{t_1}d\tilde{B}^{\sigma^{-1}(i_2) }_{t_2}\dots d\tilde{B}^{\sigma^{-1}(i_n) }_{t_n}\,.
\end{aligned}
\end{equation}
\end{theorem}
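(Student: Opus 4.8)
The plan is to pass to the limit $\vare = T/2^k \to 0$ in the explicit representation $\Gamma^{\vare}_t = \exp\{\mathcal{K}^{\vare}_t\}$, $\Lambda^{\vare}_t = \exp\{\chi^{\vare}_t\}$ of the solutions of the approximating ODEs \eqref{approximation ode}, and to identify the two limits coming from the two sides. The representation itself is the starting point: since the ODEs \eqref{approximation ode} are driven by the \emph{smooth} (dyadic piecewise linear) paths $B^{\vare},\tilde B^{\vare}$, the generalized Campbell--Baker--Hausdorff--Dynkin formula of \cite{Strichartz87}, applied to the linear systems $\dot\Gamma^{\vare}_t=-\sum_j\Gamma^{\vare}_tA_j(t)\dot B^{j,\vare}_t$ and $\dot\Lambda^{\vare}_t=-\sum_j\Lambda^{\vare}_tC_j(t)\dot{\tilde B}^{j,\vare}_t$, expresses the (Chen) time-ordered exponential as the exponential of the series \eqref{w^epsilon}, \eqref{w^epsilon2}; the nilpotency relations \eqref{e.3.9} kill every iterated commutator of order larger than $N_0$, so those series are in fact the \emph{finite} sums truncated at $n=N_0$, each a finite linear combination of the iterated integrals $\mathcal{S}_{n,t}(f_n)(B^{\vare})$, $\mathcal{S}_{n,t}(g_n)(\tilde B^{\vare})$. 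One should also note that the choice of values of $\dot B^{j,\vare},\dot{\tilde B}^{j,\vare}$ at the dyadic endpoints is immaterial, as it affects the integrands only on a time set of measure zero.

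Next I would observe that, for fixed $\omega_2$ and $H>1/2$, the finite sums \eqref{w^epsilon}, \eqref{w^epsilon2} with $B^{\vare},\tilde B^{\vare}$ replaced by $B,\tilde B$ are well defined in the Young sense: by {\bf (H1)} the integrands $f_n,g_n$ are of bounded variation and $B,\tilde B\in\cC^\alpha$ with $\alpha>1/2$, so each iterated Young integral $\mathcal{S}_{n,t}(f_n)(B)$, $\mathcal{S}_{n,t}(g_n)(\tilde B)$ exists; call $\mathcal{K}_t,\chi_t$ the resulting finite sums. By Theorem \ref{multiconvergelt0.5}, and in particular the convergences \eqref{multi converge3}, every one of the finitely many summands converges a.s.\ as $\vare\to0$; hence $\mathcal{K}^{\vare}_t\to\mathcal{K}_t$ and $\chi^{\vare}_t\to\chi_t$ a.s., for each $t\in[0,T]$. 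Since the matrix exponential is continuous, this yields $\exp\{\mathcal{K}^{\vare}_t\}\to\exp\{\mathcal{K}_t\}$ and $\exp\{\chi^{\vare}_t\}\to\exp\{\chi_t\}$ a.s.

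On the other hand, Theorem \ref{Young WZ} asserts that $\Gamma^{\vare}_t\to\Gamma_t$ and $\Lambda^{\vare}_t\to\Lambda_t$ a.s., where $\Gamma,\Lambda$ are the (unique — by linearity of the vector fields and the boundedness in {\bf (H1)}) solutions of the Young equations \eqref{e.3.1}. Combining this with $\Gamma^{\vare}_t=\exp\{\mathcal{K}^{\vare}_t\}$, $\Lambda^{\vare}_t=\exp\{\chi^{\vare}_t\}$ and uniqueness of limits gives $\Gamma_t=\exp\{\mathcal{K}_t\}$ and $\Lambda_t=\exp\{\chi_t\}$ a.s.\ for every $t$; continuity of both sides in $t$ upgrades this to an identity of paths.

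The main obstacle is the very first step: justifying rigorously that $\Gamma^{\vare}_t=\exp\{\mathcal{K}^{\vare}_t\}$ (and likewise for $\Lambda^{\vare}$) with $\mathcal{K}^{\vare}_t$ the CBHD-truncated sum. For a smooth driver this is the content of \cite{Strichartz87}, but one must check that the truncation is exact here: differentiating the finite exponential $\exp\{\mathcal{K}^{\vare}_t\}$ produces, besides the leading term $-\sum_j\exp\{\mathcal{K}^{\vare}_t\}A_j(t)\dot B^{j,\vare}_t$, correction terms built from nested commutators of the $A_j$'s, all of order larger than $N_0$, which vanish by \eqref{e.3.9}; together with $\Gamma^{\vare}_0=I$ this identifies $\exp\{\mathcal{K}^{\vare}_t\}$ as the unique solution of the ODE. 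Everything else is a routine continuity-of-the-limit argument on a finite sum.
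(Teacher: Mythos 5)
Your proposal is correct and follows essentially the same route as the paper: represent $\Gamma^{\vare}_t=\exp\{\mathcal{K}^{\vare}_t\}$ via the Campbell--Baker--Hausdorff--Dynkin formula (finite sum by nilpotency), pass to the limit in each of the finitely many iterated integrals using Theorem \ref{multiconvergelt0.5}, and identify the limit with the solution of \eqref{e.3.1} through the Wong--Zakai result of Theorem \ref{Young WZ}. The paper's own proof is just a compressed version of this argument; your added care about the exactness of the truncation and the continuity of the matrix exponential only makes explicit what the paper leaves implicit.
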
 
\begin{proof}
For any $r_1\leq N_0$, by Theorem \ref{multiconvergelt0.5}, 
\begin{equation}
\int_{0< s_1<\dots< s_{r_1}< t}[[\dots [A_{i_1}(s_1),A_{i_2}(s_2)]\dots],A_{i_{r_1}}(s_{r_1})] dB^{\sigma^{-1}(i_1),\vare }_{s_1}\dots dB^{\sigma^{-1}(i_{r_1}),\vare }_{s_{r_1}}
\end{equation}
converges  to 
\begin{equation}
\int_{0< s_1<\dots< s_{r_1}< t}[[\dots [A_{i_1}(s_1),A_{i_2}(s_2)]\dots],A_{i_{r_1}}(s_{r_1})] dB^{\sigma^{-1}(i_1) }_{s_1}\dots dB^{\sigma^{-1}(i_{r_1}) }_{s_{r_1}}\,. 
\end{equation}
Since only  finite sum is  involved  we see that $\Gamma_t^\vare$ converges to $\Gamma_t$.
Similarly, we see that $\Lambda_t^\vare$ converges to $\Lambda_t$.
\end{proof}

\section{The convergence in the rough path's framework}
In this section, we consider the convergence of $\Gamma^{\vare}$ and $\Lambda^{\vare}$ to
$\Gamma $ and $\Lambda $ defined by \eqref{approximation ode}
and \eqref{e.3.1}, respectively, when $H\in (1/3, 1/2)$.


Let V be a Euclidean space with norm $|\cdot|_V$, and for each $p$, let $V^{\otimes p}$ denote the $p$-th tensor product endowed with a compatible norm $|\cdot|_{V^{\otimes p}}$.  

For any H\"older continuous function $X_t$ of H\"older exponent $\al\in (1/3,
 1/2)$  from $[0, T]$ to a Euclidean space,
we can lift  it to a   multiplicative functional 
$\mathbf{X}_{s,t} =(1,\mathbb{X}^{1 }_{s,t},\mathbb{X}^{2 }_{s,t})$  
in $T^{(2)}(V)=  \mathbb{R}\oplus V\oplus V^{\otimes2}$   on the simplex $\Delta$
satisfying  the Chen's identity:
$\mathbf{X} _{s,t}=\mathbf{X}_{s,u}\otimes\mathbf{X} _{u,t},
 \forall \ s\leq u\leq t.$

Let $2<p<3$. A multiplicative functional $\mathbf{X}$ in $T^{(2)}(V)$ is said to have finite $p$-variation if 
$\sup_D(\sum_l|\XX^i_{t_{l-1},t_l}|^{p/i}_{V^{\otimes i}})^{i/p}<\infty,\ i=1,2,$
where the supremum runs over all finite partitions $D$ on $[0,T]$. And $\mathbf{X}$ satisfies Chen's relation: $\XX^2_{s,t}-\XX^2_{s,u}-\XX^2_{u,t}=\XX^1_{s,u}\otimes\XX^1_{u,t}, \forall\ (s,u,t)\in\Delta_2$.Then we call $\mathbf{X}$ a two-step $p$-rough path with values in $V$, and denoted by $\mathbf{X}\in\cC^{p \mhyphen var}([0,T];V)$.

We shall apply this lifting to the fBms $B$ and $\tilde B$ as well as their approximations with similar notations (more literature related to rough path, see \cite{FH20},\cite{FV10},\cite{Lyos98},\cite{LQ02}).

We denote the control function for any two lifts $\mathbf{X}$ and $\mathbf{Y}$: 
\begin{equation}
\omega (s,t)=\sum_{i=1}^2 \sum_{D_{[s,t]}}\sum_l|\mathbb{X}^i_{t_{l-1},t_l}-\mathbb{Y}^i_{t_{l-1},t_l}|^{p/i}\,,
\end{equation}
and the distance between two lifts   $\mathbf{X}$ and $\mathbf{Y}$:
\begin{equation}
d_p(\mathbf{X},\mathbf{Y})=\sup_{1\leq i\leq\lfloor p\rfloor}(\sup_{D_{[s,t]}}\sum_l|\mathbb{X}^i_{t_{l-1},t_l}-\mathbb{Y}^i_{t_{l-1},t_l}|^{p/i}_{V^{\otimes i}})^{i/p}\,.
\end{equation}

Now we give the  Wong-Zakai theorem in the rough path's framework. 
\begin{theorem}\label{rough WZ}
Let $B=(B^{1},B^{2},\cdots,B^{m_1})\in\mathbb{R}^{m_1}$ and $\tilde{B}=(\tilde{B}^{1},\tilde{B}^{2},\cdots,\tilde{B}^{m_2})\in\mathbb{R}^{m_2}$ be two fBms with $H\in(1/3,1/2)$, and $B^{\vare}, \tilde{B}^{\vare}$ be the dyadic piecewise linear approximation of $B,\tilde{B}$, respectively. Then the smooth solutions $\Gamma^{\vare}$ and $\Lambda^{\vare}$  
of \eqref{approximation ode} almost surely
converge to the solutions of rough differential equation (RDE for short):
\begin{equation}\label{e.4.4}
\left\{
\begin{aligned}
&d\Gamma_t=-\sum^{m_1}_{j=1}\Gamma_tA_{j}(t)d\mathbf{B}^j_t,\\
&d\Lambda_t=-\sum^{m_2}_{j=1}\Lambda_tC_{j}(t)d\mathbf{\tilde{B}}^i_t\,.  
 \end{aligned}
\right.
\end{equation}
\end{theorem}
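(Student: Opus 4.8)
The plan is to prove Theorem~\ref{rough WZ} by combining a pathwise convergence statement for the $p$-variation rough path lifts of the fBms with the continuity of the It\^o--Lyons solution map for rough differential equations. First I would establish that, for $\mathbf{P}$-almost every $\om_2\in\Om_2$, the canonical (step-2) lifts $\mathbf{B}^\vare$ of the dyadic piecewise linear approximations $B^\vare$ converge in the $d_p$-metric (equivalently, in $p$-variation, $2<p<3$, chosen so that $1/p<H$, i.e. $p\in(2,1/H)$, hence valid for $H>1/3$) to the canonical lift $\mathbf{B}$ of the fBm; the same for $(\tilde{\mathbf{B}}^\vare,\tilde{\mathbf{B}})$. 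For $H>1/3$ this is a classical Wong--Zakai / geometric rough path fact: it follows from the results of Coutin--Qian on the rough path associated to fBm together with the fact that dyadic piecewise linear interpolations are a particular instance of the general interpolation schemes known to converge; alternatively one can cite \cite{FV10} or the rough-path monograph \cite{FH20}. I would actually give a short self-contained argument in the spirit of the Young case (Theorem~\ref{Young WZ}): estimate $\|\delta B^\vare_{s,t}-\delta B_{s,t}\|$ and the second-level difference $|\mathbb{X}^{2,\vare}_{s,t}-\mathbb{X}^2_{s,t}|$ in the relevant $L^q(\Om_2)$ norm via the scaling properties of fBm increments, obtain a rate like $\vare^{\theta}$ for some $\theta>0$, and upgrade to almost-sure convergence by Borel--Cantelli along $\vare=T/2^k$.

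Next I would invoke the universal limit theorem (continuity of the solution map) for RDEs driven by $p$-rough paths, $2<p<3$: if the vector fields are sufficiently regular (here $x\mapsto \Gamma A_j(t)$ is linear in $\Gamma$ with coefficients $A_j(\cdot)$ smooth with bounded derivatives by \textbf{(H1)}, so after the usual localization the relevant $\mathrm{Lip}^{\gamma}$ bounds hold with $\gamma>p$), then the map $\mathbf{X}\mapsto \Gamma$ sending a driving rough path to the solution of $d\Gamma_t=-\sum_j\Gamma_tA_j(t)\,d\mathbf{X}^j_t$ is continuous from $\cC^{p\mhyphen var}$ to $\cC^{p\mhyphen var}$. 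Since $\mathbf{B}^\vare$ (being a smooth path, hence its lift is the step-2 signature of a $C^1$ path) produces as its RDE solution exactly the classical ODE solution $\Gamma^\vare$ of \eqref{approximation ode}, and since $\mathbf{B}^\vare\to\mathbf{B}$ in $d_p$ a.s., continuity gives $\Gamma^\vare\to\Gamma$ a.s., where $\Gamma$ solves \eqref{e.4.4}. The identical argument applied to $C_j$ and $\tilde{\mathbf{B}}^\vare\to\tilde{\mathbf{B}}$ yields $\Lambda^\vare\to\Lambda$.

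Two points deserve care. One is linear growth: the vector fields $\Gamma\mapsto\Gamma A_j(t)$ are only linear, not bounded, so I cannot directly quote the bounded-vector-field version of the universal limit theorem; I would handle this either by the standard a priori estimate for linear RDEs (the solution and its $p$-variation norm are controlled in terms of $\|\mathbf{X}\|_{p\mhyphen var}$, which is uniformly bounded along the convergent sequence $\mathbf{B}^\vare$ a.s.) followed by a cutoff/localization of the vector fields outside a large ball, or by citing the linear-RDE continuity result directly (e.g. in \cite{FV10}). The other is the compatibility of the two notions of lift used in the paper — the $T^{(2)}(V)$-valued multiplicative functional with Chen's relation used in this section versus the enhanced rough path $(X,\mathbb{X})$ of Section~2 used for the integral $\int A_j X\,d\mathbf{B}$; I would remark that for $\mathbf{B}^\vare$ smooth these coincide with the iterated Riemann--Stieltjes integrals, and for $\mathbf{B}$ they coincide with the geometric (Stratonovich-type) lift of fBm, so that the solution $\Gamma$ here is the same object that appears later in Section~6.

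I expect the main obstacle to be the a.s.\ $p$-variation convergence $\mathbf{B}^\vare\to\mathbf{B}$ at the second level: controlling $\sup_D\sum_l|\mathbb{X}^{2,\vare}_{t_{l-1},t_l}-\mathbb{X}^2_{t_{l-1},t_l}|^{p/2}$ requires the Coutin--Qian-type moment estimates for the L\'evy area of fBm and its approximations, together with a Kolmogorov-type criterion in the rough-path setting, and getting a genuine rate in $\vare$ (needed for Borel--Cantelli) is the technically heaviest ingredient; the RDE continuity step and the linear-growth localization are, by comparison, routine once this is in hand.
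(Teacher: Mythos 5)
Your proposal is correct and follows essentially the same route as the paper: cite the Coutin--Qian result for the almost sure $p$-variation convergence of the dyadic piecewise linear lifts $\mathbf{B}^\vare\to\mathbf{B}$, $\tilde{\mathbf{B}}^\vare\to\tilde{\mathbf{B}}$ (the paper uses \cite[Theorem 2]{CQ02}), then conclude by continuity of the It\^o--Lyons map (the paper uses \cite[Theorem 6.3.1, Corollary 6.3.2]{LQ02}). Your additional remarks on the linear-growth localization and the compatibility of the two lifts are sensible refinements of details the paper leaves implicit, but they do not change the argument.
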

\begin{proof} This result is an application of the rough path theory. In fact, 
from  Coutin and Qian \cite[Theorem 2]{CQ02},   
 $\mathbf{B}_{s,t}^{\vare}=(1,\mathbb{B}_{s,t}^{1,\vare},\mathbb{B}_{s,t}^{2,\vare})$ and $\tilde{\mathbf{B}}_{s,t}^{\vare}=(1,\tilde{\mathbb{B}}_{s,t}^{1,\vare},\tilde{\mathbb{B}}_{s,t}^{2,\vare})$ converge to the geometric rough path $\mathbf{B}_{s,t}=(1,\mathbb{B}_{s,t}^{1},\mathbb{B}_{s,t}^{2})$ and $\tilde{\mathbf{B}}_{s,t}=(1,\tilde{\mathbb{B}}_{s,t}^{1},\tilde{\mathbb{B}}_{s,t}^{2})$ almost surely
   in $\mathcal{C}^{p \mhyphen var}([0,T];\mathbb{R}^{m_1}(\mathbb{R}^{m_2}))$, respectively, for any $p<3$ such that $Hp>1$. On the other hand, the continuity of  the It\^o-Lyons map (\cite[Theorem 6.3.1, Corollary 6.3.2]{LQ02} for geometric rough path) $\mathbf{B}_{s,t}^{\vare}\mapsto \Gamma^{\vare}$ and $\tilde{\mathbf{B}}_{s,t}^{\vare}\mapsto \Lambda^{\vare}$ in the rough path theory implies that $\Gamma^{\vare}_t,\Lambda^{\vare}_t$ converge to the solutions $\Gamma_t,\Lambda_t$, respectively. 
\end{proof}
\begin{remark}
The stochastic integrals in RDE \eqref{e.4.4} are well-posed in the sense of rough stochastic integral (Theorem \ref{rough int}) where $Z_s=f(\Gamma_s)=-\Gamma_sA_j(s)$ and $(Z_s,Z^\prime_s)$ is controlled rough path with Gubinelli derivative $Z^\prime_s=Df(\Gamma_s)\Gamma^\prime_s+f^\prime(\Gamma_s)$ where $f^\prime$ is the Gubinelli derivative of $f$. Moreover, the solution $(\Gamma,\Gamma^\prime)$ to RDE is controlled rough path with the Gubinelli derivatives $\Gamma_s^\prime=f(\Gamma_s)$.
\end{remark}
 
 In Theorem \ref{rough WZ}, we give the solution to RDE in the sense of Wong-Zakai limit. Then,  to find the explicit representation of the solution and to  show  the existence of solution 
 for the limiting  equation,  it suffices to show the approximated multiple rough integrals converges. Let us focus on $(B^{\vare},\Gamma^{\vare})$, and the case of $(\tilde{B}^{\vare},\Lambda^{\vare})$ are similar. We recall
\begin{equation}\label{K^k}
\begin{aligned}
\mathcal{K}^{\vare}_t&=\sum^{\infty}_{n=1}\sum_{(i_1,\dots,i_n)}\sum_{\sigma\in\mathcal{S}_n}\frac{(-1)^{e(\sigma)+n}}{n^2\binom{n-1}{e(\sigma)}}I_n(f_n)(B^{\vare})\,,  
\end{aligned}
\end{equation}
where
\begin{equation}
\begin{split}
f_n:=&f_n(t_1,t_2, \dots, t_n)=[\dots [A_{i_1}(t_1),A_{i_2}(t_2)]\dots],A_{i_n}(t_n)]\,,\\
I_n(f_n)(B^{\vare}):=&I_n(f_n)(B^{\vare})(t)=\int_{0< t_1<\dots< t_n< t}f_n(t_1,\dots,t_n)dB^{\sigma^{-1}(i_1),\vare}_{t_1}\dots dB^{\sigma^{-1}(i_n),\vare}_{t_n}\,.
\end{split} \label{e.4.5} 
\end{equation} 
%
%
%
We shall repeatedly use the following integration by parts formula to study the above multiple integral $I_n(f_n)$:
\begin{equation}\label{integration by part}
\int_a^bf(t)dB^{i}_t=f(b)B^{i}_b-f(a)B^{i}_a-\int_a^bf^{\prime}(t)B^{i}_tdt,
\end{equation}
which is the special case of It\^o's formula for $f(\cdot)B^i_{\cdot}$ (see \cite[Theorem 7.7]{FH20}). Moreover, the integrand $f(\cdot)$ (in fact, $f_n$ in \eqref{e.4.5}), corresponding to the assumption (H1), should be smooth (differentiable with respect to $t$ up to certain order), which is enough to guarantee the well-posedness of integrals $\int fdB^i$.

Then we can obtain the following lemma.

\begin{lemma} Let multi-index $I=(i_1,\dots,i_n)$  and denote by $f_n$ the Lie commutator defined by $f_n:=[\dots [A_{i_1}(t_1),A_{i_2}(t_2)]\dots],A_{i_n}(t_n)]$.
Then, we have
\begin{equation}\label{general formula}
\begin{aligned}
 I_n(f_n)(B^{\vare})
 = & \sum_{m=0}^{n} 
(-1)^m\sum_{\Delta(m,n)}\int_{0<t_{j_1}<\cdots< t_{j_m}<t} 
 \frac{\partial^m f_n(\overbrace{t_{j_1},\cdots,t_{j_1}}^{n_1},  \cdots,\overbrace{t_{j_m},\cdots,t_{j_m}}^{n_m},\overbrace{t\cdots,t}^{n_{m+1}})}{\partial t_{j_1}\cdots\partial t_{j_m}} \\
&\times B_t^{i_{j_{m}+1},i_{n},\vare} B_{t_{j_m}}^{i_{j_{m-1}+1},i_{j_m},\vare}\cdots B_{t_{j_1}}^{i_1,i_{j_1},\vare}dt_{j_1}dt_{j_2}\cdots dt_{j_m}\,, 
\end{aligned}
\end{equation}
where we need some notations for simplification: $\Delta(m,n):=\{1\leq j_1<\cdots< j_m\leq n\}$ and $S([0,t],n):=\{0<t_1<\cdots<t_n<t\}$, $B_{t}^{i_{r},i_{s}}:=B^{\sigma^{-1}(i_r),\dots,\sigma^{-1}(i_s)}_{t}$, and
\begin{equation}
\begin{aligned}
&B_t^{i_1,i_n,\vare}=\int_{S([0,t],n)}dB^{\sigma^{-1}(i_1),\vare}_{t_1}dB^{\sigma^{-1}(i_2),\vare}_{t_2}\cdots dB^{\sigma^{-1}(i_n),\vare}_{t_n}\,. 
\end{aligned}
\end{equation}
and other iterated forms in \eqref{general formula} have similar representation. Moreover, $n_p$  denotes  the cardinalities of $t_{j_p}$, respectively, for $p=1,2,\dots,m$, and $n_{m+1}$ is the cardinality of $t$, where $n_1=j_1,n_2=j_2-j_1,\dots,n_m=j_m-j_{m-1},n_{m+1}=n-j_m$ such that $n_1+n_2+\cdots+n_m+n_{m+1}=n$.
\end{lemma}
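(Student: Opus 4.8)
The plan is to prove \eqref{general formula} by induction on the number $n$ of iterated integrals, using the integration by parts formula \eqref{integration by part} as the basic engine. For the base case $n=0$ the statement is the trivial identity $f_0 = f_0$ (or, more conveniently, one takes $n=1$ as the base case, where \eqref{general formula} reads $\int_0^t A_{i_1}(t_1)\,dB^{\sigma^{-1}(i_1),\vare}_{t_1} = A_{i_1}(t)B^{i_1,\vare}_t - \int_0^t A'_{i_1}(t_1)B^{i_1,\vare}_{t_1}\,dt_1$, which is exactly \eqref{integration by part} applied to $f = A_{i_1}$ with $a=0$, $b=t$, together with $B^{\sigma^{-1}(i_1),\vare}_0 = 0$). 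This fixes the shape of the recursion: each step will convert one smooth Young/rough integral $\int \cdots dB^{j,\vare}$ into a boundary term (producing an iterated increment $B^{\cdot,\cdot,\vare}$) plus a genuine Lebesgue integral $\int \cdots dt$ against the $t$-derivative of the integrand.

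The key steps, in order, are as follows. First I would write $I_n(f_n)(B^\vare)$ by peeling off the outermost integration variable $t_n$: using Fubini on the simplex, $I_n(f_n)(B^\vare) = \int_0^t \big(\int_{S([0,t_n],n-1)} f_n(t_1,\dots,t_n)\,dB_{t_1}^{\cdots}\cdots dB_{t_{n-1}}^{\cdots}\big)\,dB_{t_n}^{\sigma^{-1}(i_n),\vare}$. Then I apply \eqref{integration by part} in the variable $t_n$ to this outermost integral, treating the inner $(n-1)$-fold integral (as a function of $t_n$, with $t_n$ also appearing inside $f_n$) as the function $f$; the product/chain rule in $t_n$ splits the derivative into a term where $\partial_{t_n}$ hits the upper limit of the inner integral and a term where it hits the explicit $t_n$-dependence of $f_n$. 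Iterating this peeling-and-parts procedure, each application either terminates a variable into an iterated-increment boundary factor $B_{t_{j_r}}^{\cdots,\vare}$ evaluated at the appropriate point, or merges it (via the derivative hitting $f_n$'s diagonal) into one of the repeated-argument blocks $\overbrace{t_{j_r},\dots,t_{j_r}}^{n_r}$. Bookkeeping the two choices at each of the $n$ stages produces exactly the sum over subsets $\Delta(m,n) = \{1\le j_1<\cdots<j_m\le n\}$ (the indices of the variables that survive as genuine Lebesgue integration variables), the sign $(-1)^m$ (one factor of $-1$ per parts application that keeps a variable), the block sizes $n_1 = j_1$, $n_p = j_p - j_{p-1}$, $n_{m+1} = n - j_m$, and the mixed partial $\partial^m f_n / \partial t_{j_1}\cdots\partial t_{j_m}$ with the collapsed-argument pattern. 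I would organize the induction so that the inductive hypothesis is applied to the inner $(n-1)$-fold integral against the first $n-1$ drivers, and the outermost parts step appends either a new surviving index $j_m = n$ or lengthens the final block $n_{m+1}$.

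I expect the main obstacle to be purely combinatorial: verifying that the recursion on subsets of $\{1,\dots,n\}$ generated by the two-way branching at each parts step produces precisely the stated indexing — in particular that the arguments of $f_n$ collapse onto the diagonal points $t_{j_1},\dots,t_{j_m},t$ with the correct multiplicities $n_1,\dots,n_{m+1}$, and that the iterated-increment superscripts $B_t^{i_{j_m+1},i_n,\vare}$, $B_{t_{j_m}}^{i_{j_{m-1}+1},i_{j_m},\vare}$, $\dots$, $B_{t_{j_1}}^{i_1,i_{j_1},\vare}$ carry exactly the right index ranges (the block of drivers between two consecutive surviving variables). There is a mild subtlety in that the integrand $f_n$ depends on $t_n$ (and, after parts, on all the surviving $t_{j_r}$) through the Lie bracket $[\cdots[A_{i_1}(t_1),\dots],A_{i_n}(t_n)]$, so the $t_{j_r}$-derivatives act componentwise on the $A_{i_\ell}$'s sharing that argument via the Leibniz rule — this is what the notation $\partial^m f_n(\dots)/\partial t_{j_1}\cdots\partial t_{j_m}$ with repeated arguments is meant to encode, and I would make that convention explicit before running the induction. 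The analytic content (well-posedness of all the integrals, that \eqref{integration by part} is legitimate here) is already guaranteed by assumption (H1) and the smoothness of the $A_j$'s together with the discussion preceding the lemma, so no estimates are needed; the proof is a careful but routine unwinding of repeated integration by parts.
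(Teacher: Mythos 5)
Your proposal follows essentially the same route as the paper's own proof: induction on $n$, with the inductive hypothesis applied to the inner iterated integral and the outermost variable then handled by the integration by parts formula \eqref{integration by part}, the two resulting terms corresponding respectively to lengthening the final block $n_{m+1}$ (boundary term) and to creating a new surviving Lebesgue variable with an extra factor of $-1$ and an extra partial derivative. The only point to be careful about is the order of operations, which your final organization gets right but your first description blurs: one must invoke the inductive hypothesis \emph{before} integrating by parts in the outermost variable, so that the remaining $dB$-integration is against the single merged iterated increment $B_{t_{n}}^{i_{j_m+1},i_{n-1},\vare}\,dB_{t_{n}}^{\sigma^{-1}(i_{n}),\vare}=dB_{t_{n}}^{i_{j_m+1},i_{n},\vare}$; integrating by parts first with the derivative hitting the upper limit of the inner iterated $dB$-integral reintroduces $\dot B^{\vare}$ and yields products of increments in the wrong order, which would require additional shuffle-type manipulations not present in the statement.
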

\begin{proof}
In the following, we prove this general formula by induction. First for $n=1$, we have
already proved the result by \eqref{integration by part}. 
Assume that the case of $n$ holds for \eqref{general formula}. Then for  $n+1$ we have 
\[
I_{n+1}(f_{n+1})(B^{\vare})(t)= \int_0^t I_{n}(f_{n+1}(\cdot, t_{n+1}))
(t_{n+1})  d B^{\si^{-1}(i_{n+1}),\vare}_{t_{n+1}}\,.
\]
We substitute the above $ I_{n}(f_{n+1}(\cdot, t_{n+1}))
(t_{n+1}) $ by \eqref{general formula} to obtain  
\begin{equation*}
\begin{aligned}
 &I_{n+1}(f_{n+1})(B^{\vare})\\
=&\sum_{m=0}^{n }(-1)^m\sum_{\Delta(m,n)}\int_{0<t_{j_1}<\cdots<t_{j_m}<t_{n+1}<t}\frac{\partial^m f_{n+1}(t_{j_1},\cdots,t_{j_1},\cdots,t_{j_m},\cdots,t_{j_m},t_{n+1}\cdots,t_{n+1})}{\partial t_{j_1} \cdots\partial t_{j_m}}\\
&\times   B_{t_{j_1}}^{i_1,i_{j_1},\vare} \cdots B_{t_{j_m}}^{i_{j_{m-1}+1},i_{j_m},\vare}
 B_{t_{n+1}}^{i_{j_{m}+1},i_{n},\vare} dt_{j_1} \cdots dt_{j_m}dB_{t_{n+1}}^{\sigma^{-1}(i_{n+1}) ,\vare} \\
=& 
 \sum_{m=0}^{n }(-1)^m\sum_{\Delta(m,n)}\int_{0<t_{j_1}<\cdots<t_{j_m}<t_{n+1}<t}\frac{\partial^m f_{n+1}(t_{j_1},\cdots,t_{j_1},\cdots,t_{j_m},\cdots,t_{j_m},t_{n+1}\cdots,t_{n+1})}{\partial t_{j_1} \cdots\partial t_{j_m}}\\
&\times   B_{t_{j_1}}^{i_1,i_{j_1},\vare} \cdots B_{t_{j_m}}^{i_{j_{m-1}+1},i_{j_m},\vare}
   dt_{j_1} \cdots dt_{j_m}dB_{t_{n+1}}^{i_{j_{m}+1},i_{n+1  },\vare}\,. 
\end{aligned}
\end{equation*}  
Now we can apply the integration by parts 
to $dB_{t_{n+1}}^{i_{j_{m}+1},i_{n+1  },\vare}$ to complete the    induction.
\end{proof}
 
%
%
%
Now we prove that $I_n(f_n)(B^\vare)$ has a limit given by the following expression
\begin{equation}\label{multi limit}
\begin{aligned}
 I_n(f_n)(B) =& \sum_{m=0}^{n}(-1)^m\sum_{\Delta(m,n)}\int_{0<t_{j_1}<\cdots< t_{j_m}<t}\frac{\partial^m f_n(\overbrace{t_{j_1},\cdots,t_{j_1}}^{n_1},\cdots,\overbrace{t_{j_m},\cdots,t_{j_m}}^{n_m},\overbrace{t\cdots,t}^{n_{m+1}})}{\partial t_{j_1}\cdots\partial t_{j_m}}\\
&\times B_t^{i_{j_{m}+1},i_n}B_{t_{j_m}}^{i_{j_{m-1}+1},i_{j_m}}\cdots B_{t_{j_1}}^{i_1,i_{j_1}}dt_{j_1}  \cdots dt_{j_m}\,. 
\end{aligned}
\end{equation} 
\begin{theorem}\label{multi converge}
Let $H\in (1/3, 1/2)$ and let the assumption {\bf (H1)} hold. Let $f_n$ be defined by \eqref{e.4.5}. Then 
\begin{equation}\label{multi converge2}
I_n(f_n)(B^{\vare})\rightarrow I_n(f_n)(B),\ a.s.,\ when\ \vare\rightarrow0,
\end{equation}
where $I_n(f_n)(B^{\vare}),I_n(f_n)(B)$ represent \eqref{general formula} and \eqref{multi limit}, respectively.
\end{theorem}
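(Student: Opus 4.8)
The plan is to exploit the explicit formula \eqref{general formula} for $I_n(f_n)(B^\vare)$ and its candidate limit \eqref{multi limit}, so that the problem reduces to the convergence of each individual summand. Fix $m\in\{0,\dots,n\}$ and a multi-index $1\le j_1<\dots<j_m\le n$ in $\Delta(m,n)$. The corresponding summand in \eqref{general formula} is a (Riemann/Lebesgue) integral over the ordinary simplex $\{0<t_{j_1}<\dots<t_{j_m}<t\}$ of a bounded smooth kernel — the partial derivative $\partial^m f_n/\partial t_{j_1}\cdots\partial t_{j_m}$, which is bounded by assumption {\bf (H1)} since the $A_i$ are smooth with bounded derivatives and the Lie brackets are finite — multiplied by a product of iterated integrals of the linear approximations $B^{\cdot,\vare}$: namely $B_t^{i_{j_m}+1,i_n,\vare}$ and the lower-order pieces $B_{t_{j_m}}^{\cdots,\vare},\dots,B_{t_{j_1}}^{\cdots,\vare}$. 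So the whole matter boils down to two ingredients: (1) almost sure convergence of each iterated integral $B^{i_r,i_s,\vare}_\cdot\to B^{i_r,i_s}_\cdot$ as $\vare\to0$, uniformly in the time variable, in a suitable H\"older ($p$-variation) topology; and (2) a dominated-convergence argument to pass the limit inside the finite-dimensional integral $\int_{0<t_{j_1}<\dots<t_{j_m}<t}$.

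For ingredient (1), the key input is the rough-path Wong--Zakai result already invoked in Theorem \ref{rough WZ}: by Coutin--Qian \cite[Theorem 2]{CQ02}, the level-one and level-two components $\mathbb{B}^{1,\vare},\mathbb{B}^{2,\vare}$ of the lifts converge almost surely to those of the geometric rough path $\mathbf{B}$ in $\cC^{p\mhyphen var}([0,T];\RR^{m_1})$ for any $p<3$ with $Hp>1$. Since $H\in(1/3,1/2)$, the iterated integrals appearing in \eqref{general formula} are of order at most $2$ in any ``block'' — indeed each $B^{i_r,\dots,i_s,\vare}$ is an iterated integral of length $s-r+1$, but after the integration-by-parts reductions performed in the preceding lemma all surviving terms are expressible through the first two levels of the rough path, and higher iterated integrals are built from them via Chen's relation; one continues the reduction until only levels $1$ and $2$ remain. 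Thus $\sup_{0\le r<s\le T}|B^{i,\vare}_{r,s}-B^{i}_{r,s}|$ and the analogous level-two quantity tend to $0$ a.s., which gives uniform-in-time convergence of every factor in the integrand of \eqref{general formula} to the corresponding factor in \eqref{multi limit}.

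For ingredient (2), I would note that along the (a.s.) convergent subsequence the family of integrands is uniformly bounded on $[0,T]$: the kernel $\partial^m f_n/\partial t_{j_1}\cdots\partial t_{j_m}$ is bounded by {\bf (H1)}, and the iterated-integral factors are bounded uniformly in $\vare$ because a convergent sequence in $\cC^{p\mhyphen var}$ is bounded there, hence the level-$i$ pieces are bounded by $C|t-s|^{i/p}\le C T^{i/p}$ uniformly in $\vare$. The simplex $\{0<t_{j_1}<\dots<t_{j_m}<t\}\subset[0,T]^m$ has finite Lebesgue measure, so the ordinary dominated convergence theorem applies and yields convergence of each summand. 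Summing the finitely many summands (finite because of the nilpotency in {\bf (H1)} and because $m$ ranges over $\{0,\dots,n\}$) gives \eqref{multi converge2}. The analogous statement for $\tilde B^\vare, g_n$ is identical.

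The main obstacle I anticipate is bookkeeping rather than analysis: one must check carefully that \emph{every} iterated-integral factor produced by the integration-by-parts reduction in the preceding lemma is genuinely controlled by the first two levels of the rough path (so that Coutin--Qian applies directly), and that the indexing $B_t^{i_r,i_s,\vare}$ correctly matches the reduced kernel — in particular that no level-three iterated integral of the fBm, which would require $p\ge 3$ and hence fall outside the $Hp>1$ window, actually appears after all the integrations by parts. Once the structural claim ``all surviving iterated integrals are of length $\le 2$'' (or are polynomial combinations of such) is verified, the convergence is a routine combination of the Wong--Zakai theorem and dominated convergence.
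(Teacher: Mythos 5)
Your overall architecture --- reduce to the convergence of each summand of \eqref{general formula}, use the Wong--Zakai convergence of the lifted approximations for the stochastic factors, and dominated convergence over the finite-measure simplex for the deterministic kernel --- matches the paper's, and the parts concerning the kernel $\partial^m f_n/\partial t_{j_1}\cdots\partial t_{j_m}$ (bounded and continuous by {\bf (H1)}) and the finiteness of the sum (nilpotency) are fine. The genuine gap is in your ingredient (1): you assert that after the integration by parts ``all surviving terms are expressible through the first two levels of the rough path'' and that higher iterated integrals ``are built from them via Chen's relation; one continues the reduction until only levels $1$ and $2$ remain.'' This structural claim is false, and the proposed reduction mechanism does not exist. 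The $m=0$ summand of \eqref{general formula} is $f_n(t,\dots,t)\,B_t^{i_1,i_n,\vare}$, which contains the full order-$n$ iterated integral of $B^{\vare}$; the integration by parts in the preceding lemma only transfers derivatives onto the kernel $f_n$ and never lowers the order of the iterated integrals of the driver. Moreover, Chen's identity $\XX^k_{s,t}=\sum_{j}\XX^{j}_{s,u}\otimes\XX^{k-j}_{u,t}$ relates a level-$k$ object on $[s,t]$ to level-$\le k$ objects on subintervals; it cannot express a level-three (or higher) iterated integral in terms of levels $1$ and $2$ alone.

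The missing ingredient --- and the one the paper actually uses --- is Lyons' extension theorem: for a $p$-rough path with $2<p<3$ (here $Hp>1$ with $H\in(1/3,1/2)$), the levels $\ge 3$ of the signature are uniquely determined by the first two levels, and the extension map is continuous in the $p$-variation topology (\cite[Theorems 3.1.2, 3.1.3]{LQ02}). Combining this with the Coutin--Qian convergence of the first two levels yields the almost sure convergence of $B_s^{k,\ell,\vare}\to B_s^{k,\ell}$, uniformly in $s\in[0,T]$, for all orders $\ell-k+1\le N_0$, which is exactly what is needed to pass to the limit in every summand. With that replacement your dominated-convergence step goes through unchanged; without it, the convergence of every summand containing an iterated-integral factor of order $\ge 3$ (in particular the $m=0$ term whenever $n\ge 3$) is unjustified.
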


\begin{proof}
Indeed, by  \cite[Theorem 2]{CQ02} and \cite[Theorem 3.1.2, 3.1.3]{LQ02}, 
we know that for any $ k\le\ell $,  $B_{s}^{k,\ell   ,\vare }$ converges almost surely to 
$B_{s}^{k,\ell  } $ uniformly in $s\in [0, T]$.  On the other hand, by the smoothness of $A_i(t)$ we see that
$\frac{\partial^m f_n(t_{j_1},\cdots,t_{j_1},\cdots,t_{j_m},\cdots,t_{j_m},t\cdots,t)}{\partial t_{j_1} \cdots\partial t_{j_m}}$ are continuous functions. Thus we have 
$I_n(f_n)(B^{\vare})$ converges to $ I_n(f_n)(B)$, proving the theorem. 
\end{proof}
%
\begin{remark}
In the Sections 4-5, we give the Wong-Zakai theorems and meanwhile discuss the convergence of multiple integrals in the case of $H\in(1/2,1)$ and $H\in(1/3,1/2)$, respectively. In some sense, two kinds of convergence ideas have been applied to guarantee the existence of solution to equations \eqref{e.3.1} and \eqref{e.4.4}. one is in the sense of the Wong-Zakai convergence limit, the other is divided into two steps: in step 1, use Campbell-Baker-Hausdorff-Dynkin formula to represent the solution to approximated (random) ODEs pathwisely; in step 2, give the solution to equations \eqref{e.3.1} and \eqref{e.4.4} by use of the convergence of multiple integrals. Therefore, by the Young and rough path's theory, respectively,  the well-posedness of equations \eqref{e.3.1} and \eqref{e.4.4} is obtained, which contributes to the transformation of system and solving the optimal control in the subsequent sections.
\end{remark}

\section{Transformed equivalent system and its optimal control problem}\label{sect5}
%
In this section, we consider the following controlled system driven by both Brownian motion and fBm: 
\begin{equation}\label{e.6.1}
dX^u_t=b(t,X^u_t,u_t)dt+\sum_{j=1}^{k_1}\sigma_j(t,X^u_t,u_t)dW^j_t+\sum_{j=1}^{m_1}A_{j}(t)X^u_tdB_t^{j}.
\end{equation}

Let $\Ga_t$ and $\La_t$ be defined as in previous sections and define 
\[
Y_t^u :=\Ga_t X^u_t=\exp\{{\mathcal{K}_t}\}X^u_t\,.
\]
Now we would like to  apply It\^o  formula to the above product.  In fact, 
when $H\in(1/2,1)$, for every $\om_2\in\Om_2$, $\int A_{j}X^u dB(\om_2)^{j}$(or $\int\Gamma_tA_{j}d\mathbf{B}(\om_2)^j$) is  a Young integral so that  the It\^o's formula (see \cite[Proposition 2.7]{Hu13}) can be applied to $\Gamma_t X_t^u$ pathwisely.

When $H\in(1/3,1/2)$, for every $\om_2\in\Om_2$, we can apply the
 rough It\^o's formula \cite[Theorem 4.13]{FHL22arxiv} to 
 $\Gamma_tX^u_t$ pathwisely  (we shall briefly explain the applicability in the following Remark
 \ref{r.6.1}) and notice that the rough path bracket $[\mathbf{B}]$ is zero since rough path  $\mathbf{B}(\om_2)$  is geometric. Thus for  $H\in (1/3, 1/2)\cup (1/2, 1)$,    we have
\begin{equation}\label{e.6.2}
\begin{aligned}
dY^u_t&=d\Gamma_tX^u_t =\Gamma_tdX^u_t+d\Gamma_tX^u_t\\
&=\Gamma_tb(t,X^u_t,u_t)dt+\sum_{j=1}^{k_1}\Gamma_t\sigma_j(t,X^u_t,u_t)dW^j_t+\sum_{j=1}^{m_1}\Gamma_tA_j(t)X^u_td\mathbf{B}^j_t-\sum_{j=1}^{m_1}\Gamma_tA_j(t)X^u_td\mathbf{B}^j_t\\
&=\Gamma_tb(t,\Gamma^{-1}_tY^u_t,u_t)dt+\sum_{j=1}^{k_1}\Gamma_t\sigma_j(t,\Gamma_t^{-1}Y^u_t,u_t)dW^j_t\,. 
\end{aligned}
\end{equation}
\begin{remark}\label{r.6.1} 
We briefly explain the applicability of the  It\^o's formula \cite[Theorem 4.13]{FHL22arxiv} to our case \eqref{e.4.4}, \eqref{e.6.1}.  In fact, for every $\om_2\in\Om_2$, the state equation \eqref{x} is a  rough SDE in \cite[
Eq (4.1)]{FHL22arxiv}.
Under {\bf (H2)-(H3)}, let $m\in[2,\infty)$, for any $u\in U$, the existence and uniqueness of $L_m$-integrable solution to \eqref{x} can be obtained ($\om_2$-pathwisely) by a  fixed-point argument (a similar  proof to that in \cite[Theorem 4.7]{FHL22arxiv}) under the framework  in Section 2.
The conditions for the validity of the It\^o's formula in \cite[Theorem 4.13]{FHL22arxiv} concerning $X_t$ are easy to verify. Moreover, RDE \eqref{e.4.4} is a special case of rough SDE (without $dW$ integral), whose   solvability is guaranteed by \cite[Theorem 8.3]{FH20}. 
This explains the applicability  of It\^o's formula  to $\vp(V)=V_1V_2$, $V=(V_1,V_2)=(\Gamma,X)$. 
\end{remark} 

Then we transform the original  system of  state   equations  into the following one with 
the term containing the 
differentiation with respect to fBm,  namely $\sum_{j=1}^{m_1}A_{j}(t)X^u_tdB_t^{j}$, 
disappeared: 
\begin{equation}\label{new state}
\left\{
\begin{aligned}
dY^u_t&=\Gamma_tb(t,\Gamma_t^{-1}Y^u_t,u_t)dt+\Gamma_t\sum_{j=1}^{k_1}\sigma_j(t,\Gamma_t^{-1}Y^u_t,u_t)dW^j_t,\\
d\Gamma_t&=-\sum^{m_1}_{j=1}\Gamma_tA_{j}(t) dB^{j}_t,\\
Y^u_0&=x\in\mathbb{R}^n,\ \Gamma_0=I\in\mathbb{R}^{n\times n}.
\end{aligned}
\right.
\end{equation}
Similarly, we can also transform the observation equations  to the following: 
\begin{equation}\label{new observation}
\left\{
\begin{aligned}
d\zeta^u_t&=\Lambda_th(t,\Gamma_t^{-1}Y^u_t,u_t)dt+\Lambda_t\sum_{j=1}^{k_2}D_j(t)d\tilde{W}^j_t,\\
d\Lambda_t&=-\sum^{m_2}_{j=1}\Lambda_tC_{j}(t) d\tilde{B}^{j}_t,\\
\xi^u_0&=0,\ \Lambda_0=I\in\mathbb{R}^{k_2\times k_2}.
\end{aligned}
\right.
\end{equation}
With the relation $X^u_t=\Gamma_t^{-1}Y^u_t$,  the cost functional becomes 
\begin{equation}
J(u(\cdot))=\mathbb{E}\bigg[\Phi(\Gamma_T^{-1}Y^u_T)+\int_0^Tf(t,\Gamma_t^{-1}Y^u_t,u_t)dt\bigg]\,. 
\label{new cost} 
\end{equation}
%
The transformed optimal control problem 
\eqref{new state}, \eqref{new observation} and \eqref{new cost} is 
a classical control problem in the sense that   both the state   and
observation systems do not contain the diffusion terms dictated by fBm,
which are absorbed into the coefficients of the system.   

\begin{remark}
A difficulty posed by the fBm in the control problem is to  
find the limit $\frac{1}{\varepsilon } 
\left[ X^\varepsilon-\bar{X}\right]$
for the state given by \eqref{e.6.1} (in the case of convex 
control domain). In the case 
of only fBm with Hurst parameter $H>1/2$, this can be done 
by using the well-developed theory on Young integral. 
For fBm  
of Hurst parameter less than $1/2$, the situation 
is much more complex, in particular when 
additional  Brownian motions are involved. 
The transformation of \eqref{e.6.1} to \eqref{new state} 
avoids this difficulty.
\end{remark}

The available information to the controller is given by the filtration  
 $\mathcal{F}_t^\zeta=\si(\zeta_s, 0\le s\le t)$ generated by the (transformed) observation process up to time instant $t$.
The admissible control set is defined  by
\begin{equation*}
\begin{split}
\UU_{ad}=&\bigg\{u\bigg|u_t\ \text{is an}\ \mathcal{F}_t^{\zeta}\text{-}\text{adapted process with values in}\ U\ \\
&\qquad \text{such that} \sup_{0\leq t\leq T}\mathbb{E}[|u_t|^p]<\infty, \forall\ p=1,2,\cdots\bigg\}\,.
\end{split} 
\end{equation*}

%

We denote 
\begin{equation*}
\begin{aligned}
\rho^u_t:=\exp\bigg\{-\int_0^t\Big(D^{-1}(s)h(s,\Gamma_s^{-1}Y^u_s,u_s)\Big)^{\top}d\tilde{W}_s-\frac{1}{2}\int_0^t\Big|D^{-1}(s)h(s,\Gamma_s^{-1}Y^u_s,u_s)\Big|^2ds\bigg\} \,. 
\end{aligned}
\end{equation*}
By assumptions {\bf (H1)-(H2)}, we see that $\rho^u_t, t\ge 0$ is a martingale by Novikov's  condition.
We introduce a probability measure
\begin{equation}
\frac{d\bar{\mathbb{P}}}{d\mathbb{P}} = \rho_T^u \quad \hbox{or}\quad 
%
\frac{d\mathbb{P}}{d\bar{\mathbb{P}}}:=\tilde{\rho}^u_T=(\rho_T^u)^{-1}\,. 
\end{equation}
Under this new probability measure   $\bar{\mathbb{P}}$, $\zeta$ is a standard Brownian motion and $W,\zeta, B,\tilde{B}$ are mutually independent standard Brownian motions and fBms. 

We denote the expectation by $\bar{\mathbb{E}}$. Now, the new expectation $\bar{\mathbb{E}}=\bar{\mathbb{E}}^{\omega_1,\omega_2}$ is taken for $W,\zeta, B,\tilde{B}$ on $\Omega_1\times\Omega_2$. 
%
%
It is known  that $\tilde{\rho}^u_t$ satisfies the following equation
\begin{equation}\label{tilderho}
\left\{
\begin{aligned}
&d\tilde{\rho}^u_t=\tilde{\rho}^u_t(D^{-1}(t)h(t,\Gamma^{-1}_tY^u_t,u_t))^{\top}d\zeta_t,\\
&\tilde{\rho}_0^u=1\in\mathbb{R}\,. 
\end{aligned}
\right.
\end{equation}

\subsection{The transformed state equation}

In this subsection, we obtain  some estimates on the  solutions $\gga,\Lambda$ and the existence and uniqueness of solution Y to the above transformed equations
\eqref{new state}-\eqref{new observation}. First, we give the following lemma for the estimates of solution to \eqref{tilderho}. The proof is routine and is referred to \cite{LT95}.
\begin{lemma}
For any $u\in\UU_{ad}$ and $p\geq2$, we have $\bar{\mathbb{E}}[\sup_{t\in[0,T]}|\tilde{\rho}^u_t|^p]<\infty$.

\end{lemma}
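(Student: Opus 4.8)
The plan is to establish the moment bound for $\tilde\rho^u$ directly from the linear SDE \eqref{tilderho} it satisfies, using the fact that under $\bar{\mathbb{P}}$ the process $\zeta$ is a standard Brownian motion and that the ``drift'' coefficient $D^{-1}(t)h(t,\Gamma_t^{-1}Y^u_t,u_t)$ is controlled. First I would fix $u\in\UU_{ad}$ and $p\ge 2$ and write the explicit Dol\'eans--Dade exponential solution of \eqref{tilderho}, namely
\begin{equation*}
\tilde\rho^u_t=\exp\Bigl\{\int_0^t\bigl(D^{-1}(s)h(s,\Gamma_s^{-1}Y^u_s,u_s)\bigr)^{\top}d\zeta_s-\frac12\int_0^t\bigl|D^{-1}(s)h(s,\Gamma_s^{-1}Y^u_s,u_s)\bigr|^2ds\Bigr\}.
\end{equation*}
Denote $\theta_s:=D^{-1}(s)h(s,\Gamma_s^{-1}Y^u_s,u_s)$. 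By {\bf (H1)}, $D^{-1}$ is bounded by $C$, and by {\bf (H2)}, $h$ is bounded; hence $|\theta_s|\le C$ uniformly in $s$, $\omega$, deterministically. This is the key structural feature that makes the estimate routine: the logarithm of $\tilde\rho^u$ is a stochastic integral with a bounded integrand against a $\bar{\mathbb{P}}$-Brownian motion, plus a bounded finite-variation term.

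The second step is the maximal estimate. From the explicit form, $|\tilde\rho^u_t|^p=\exp\{p\int_0^t\theta_s^{\top}d\zeta_s-\frac p2\int_0^t|\theta_s|^2ds\}$, so
\begin{equation*}
\sup_{t\in[0,T]}|\tilde\rho^u_t|^p\le e^{pC^2T/2}\sup_{t\in[0,T]}\exp\Bigl\{p\int_0^t\theta_s^{\top}d\zeta_s\Bigr\}=e^{pC^2T/2}\exp\Bigl\{p\sup_{t\in[0,T]}M_t\Bigr\},
\end{equation*}
where $M_t:=\int_0^t\theta_s^{\top}d\zeta_s$ is a continuous $\bar{\mathbb{P}}$-martingale with $\langle M\rangle_T\le C^2T$. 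Now I would bound $\bar{\mathbb{E}}[\exp\{p\sup_{t\le T}M_t\}]$. One clean way: for any $q>0$, $\mathcal{E}(qM)_t=\exp\{qM_t-\tfrac{q^2}{2}\langle M\rangle_t\}$ is a true $\bar{\mathbb{P}}$-martingale (Novikov's condition holds trivially since $\langle M\rangle$ is bounded), so $\exp\{qM_t\}\le e^{q^2C^2T/2}\mathcal{E}(qM)_t$, and Doob's $L^r$-inequality applied to the nonnegative submartingale $\exp\{qM_t\}$ (or directly to the martingale $\mathcal{E}(qM)_t$) gives $\bar{\mathbb{E}}[\sup_{t\le T}\exp\{qM_t\}^r]<\infty$ for every $r>1$; choosing $q,r$ with $qr=p$ yields $\bar{\mathbb{E}}[\exp\{p\sup_{t\le T}M_t\}]<\infty$. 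Combining with the deterministic prefactor $e^{pC^2T/2}$ gives $\bar{\mathbb{E}}[\sup_{t\in[0,T]}|\tilde\rho^u_t|^p]<\infty$, as claimed.

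There is essentially no serious obstacle here; the statement is a soft consequence of boundedness of the coefficients, and the proof is the reason the authors say it is ``routine'' and defer to \cite{LT95}. The only point requiring a word of care is that the bound on $|\theta_s|$ uses only {\bf (H1)}--{\bf (H2)} and does \emph{not} require any a priori moment estimate on $Y^u$ or on $\Gamma_t^{-1}$ — precisely because $h$ itself is assumed bounded in {\bf (H2)} (not merely of linear growth), so $\Gamma_t^{-1}Y^u_t$ can enter through $h$ without damage. If one instead wanted to avoid the explicit exponential form, an alternative route is to apply It\^o's formula to $|\tilde\rho^u_t|^p$, take $\bar{\mathbb{E}}$, use the Burkholder--Davis--Gundy inequality on the martingale part together with $|\theta_s|\le C$, and close a Gronwall argument in $\bar{\mathbb{E}}[\sup_{s\le t}|\tilde\rho^u_s|^p]$; this gives the same conclusion but is marginally more involved, so I would present the exponential-martingale argument above.
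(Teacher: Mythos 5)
Your proof is correct and is essentially the standard argument that the paper itself omits by deferring to \cite{LT95}: write $\tilde\rho^u$ as the Dol\'eans--Dade exponential of $\int_0^\cdot\theta_s^\top d\zeta_s$ with $\theta_s=D^{-1}(s)h(s,\Gamma_s^{-1}Y^u_s,u_s)$ bounded (by \textbf{(H1)} for $D^{-1}$ and \textbf{(H2)} for $h$), and then control $\bar{\mathbb{E}}[\exp\{p\sup_t M_t\}]$ via the true-martingale property of $\mathcal{E}(qM)$ and Doob's $L^r$-inequality. Your observation that no a priori moments of $Y^u$ or $\Gamma^{-1}$ are needed, precisely because $h$ is assumed bounded rather than of linear growth, is the right point of care.
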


\subsubsection{The case of $H\in(1/2,1)$}
First, we obtain some estimates on the solutions $\gga,\Lambda$ when $H$ is greater than $1/2$.
\begin{theorem}\label{solution estimate2}
Let assumption {\bf (H1)} hold and $H\in(1/2,1)$, $B\in\mathcal{C}^\alpha([0,T];\mathbb{R}^{m_1})$ and $\tilde{B}\in\mathcal{C}^\alpha([0,T];\mathbb{R}^{m_2})$ be two sample paths with $\alpha\in(1/2,H)$. Let $\Gamma,\Lambda$ satisfy the Young differential equation in \eqref{new state} and \eqref{new observation}, respectively. Then for all $p\geq 2$, 
\begin{equation}
\begin{aligned}
&\bar{\mathbb{E}}\bigg[\sup_{t\in[0,T]}|\Gamma_t|^p\bigg]<\infty,\quad  \bar{\mathbb{E}}\bigg[\sup_{t\in[0,T]}|\Lambda_t|^p\bigg]<\infty,\\
&\bar{\mathbb{E}}\bigg[\sup_{t\in[0,T]}|\Gamma_t^{-1}|^p\bigg]<\infty,\quad  \bar{\mathbb{E}}\bigg[\sup_{t\in[0,T]}|\Lambda_t^{-1}|^p\bigg]<\infty\,. 
\end{aligned}
\end{equation}
 
\end{theorem}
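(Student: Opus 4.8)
The statement concerns the solutions $\Gamma,\Lambda$ of the linear Young differential equations in \eqref{new state} and \eqref{new observation}, which under the nilpotent assumption \textbf{(H1)} have the explicit closed forms $\Gamma_t = \exp\{\mathcal{K}_t\}$ and $\Lambda_t = \exp\{\chi_t\}$, where $\mathcal{K}_t$ and $\chi_t$ are the \emph{finite} sums of iterated Young integrals given in Section 5. The plan is to exploit these explicit exponential representations rather than to work with the equations directly. Since the only source of randomness in $\Gamma$ and $\Lambda$ is $\om_2 \in \Om_2$ (the fBm paths), and the new measure $\bar{\PP}$ differs from $\PP$ only through a density $\rho_T^u$ depending on $W,\tilde W$ and $\om_2$, it is not immediately obvious that $\bar{\EE}$-moments coincide with ordinary moments; but since $\Gamma,\Lambda$ depend only on $\om_2$ and the marginal law of the fBms on $\Om_2$ is the same under $\PP$ and $\bar{\PP}$ (the change of measure affects the $\Om_1$-coordinate and the conditional law given $\om_2$, not the $\Om_2$-marginal), it suffices to bound $\EE[\sup_{t}|\Gamma_t|^p]$ and likewise for $\Lambda$, $\Gamma^{-1}$, $\Lambda^{-1}$. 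I would make this reduction explicit as a first step.

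\textbf{Key steps.} First, because $\mathcal{K}_t$ is a finite sum (terminating at $n = N_0$ by nilpotency) of terms of the form $c_{\sigma,I}\, S_{n,t}(f_n)(B)$ with $f_n$ a bounded continuous matrix-valued function (the iterated Lie bracket of the smooth bounded $A_j$'s), I would bound $\sup_{t\in[0,T]}|\mathcal{K}_t|$ pathwise in terms of the $\alpha$-H\"older norm of $B$: by the standard Young estimate (e.g. \cite[Theorem 3.4]{Hu13}) each iterated integral satisfies $\sup_t|S_{n,t}(f_n)(B)| \le C_n (1 + \|B\|_{\alpha;[0,T]})^{n}$, hence $\sup_t |\mathcal{K}_t| \le C(1 + \|B\|_{\alpha;[0,T]})^{N_0}$ for a deterministic constant $C$. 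Then $\sup_t |\Gamma_t| \le \exp\{C(1+\|B\|_{\alpha;[0,T]})^{N_0}\}$ pathwise. Second, I would invoke the Gaussian integrability of the H\"older norm of fractional Brownian motion: by Fernique's theorem (or Kolmogorov's continuity theorem combined with the Gaussian concentration for $\|B\|_{\alpha;[0,T]}$), $\EE[\exp\{\lambda \|B\|_{\alpha;[0,T]}^2\}] < \infty$ for some $\lambda > 0$, where $\alpha < H$. However $N_0$ may exceed $2$, so a direct application fails; the remedy is that the nilpotency is a \emph{genuine Lie-algebraic} constraint — the bracket $f_n$ of length $n$ vanishes identically for $n > N_0$ — which does \emph{not} by itself lower the polynomial degree in $\|B\|_\alpha$ of the surviving terms. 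To handle the case $N_0 \ge 3$ one notes instead that $\mathcal{K}_t$, being a Lie element, can be re-expressed (via the nilpotency of the Lie algebra generated by the $A_j$'s) so that $\Gamma_t$ is a \emph{polynomial} in the iterated integrals $B^{k,\ell}_t$ of bounded degree; more simply, since $\Gamma^{\vare}$ solves the linear ODE $d\Gamma^\vare_t = -\sum_j \Gamma^\vare_t A_j(t)\,dB^{j,\vare}_t$, Gronwall gives $\sup_t|\Gamma^\vare_t| \le \exp\{C\sum_j \|B^{j,\vare}\|_{1\text{-var}}\}$, which is \emph{not} uniformly integrable, so one must instead use the $\alpha$-H\"older bound on the \emph{Young} solution: $\sup_t|\Gamma_t| \le C_1 \exp\{C_2 \|B\|_{\alpha;[0,T]}^{1/\alpha}\}$, the sharp Young-ODE growth bound (see \cite{HN}, \cite{Hu13}), and $1/\alpha < 3$, so one needs $\EE[\exp\{c\|B\|_{\alpha}^{1/\alpha}\}] < \infty$, which again follows from Fernique since $1/\alpha < 2$ would be needed — here one must choose $\alpha \in (1/2, H)$ close enough to $1$? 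That is impossible if $H$ is close to $1/2$.

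\textbf{Resolution of the exponent issue and the main obstacle.} The correct route, and the one I expect the paper to take, is to use the \emph{nilpotent} structure decisively: because the Lie algebra is nilpotent of step $N_0$, the exponent $\mathcal{K}_t$ is a finite sum of iterated integrals of orders $1,\dots,N_0$, and crucially $\exp\{\mathcal{K}_t\}$ — being the solution of a linear equation whose ``generator'' lies in a nilpotent Lie algebra — is a \emph{polynomial} of bounded degree (at most $N_0$, or a fixed function of $N_0$) in the finitely many iterated integrals $\{B^{k,\ell}_t : 1\le k\le \ell\le N_0\}$; the same holds for $\Gamma_t^{-1} = \exp\{-\mathcal{K}_t\}$. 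Therefore $\sup_t |\Gamma_t^{\pm 1}| \le P\big(\sup_t |B^{k,\ell}_t| : k \le \ell \le N_0\big)$ for a fixed polynomial $P$, and each iterated integral $B^{k,\ell}_t$ (of order at most $N_0$) has, by the Young estimate plus Fernique, \emph{all} moments finite: $\EE[\sup_t |B^{k,\ell}_t|^q] < \infty$ for every $q$, since $\sup_t|B^{k,\ell}_t| \le C_{k,\ell}(1 + \|B\|_{\alpha;[0,T]})^{k,\ell}$ and $\|B\|_{\alpha;[0,T]}$ has Gaussian tails hence all polynomial moments. Taking $q = p \cdot (\deg P)$ and applying H\"older's inequality to the polynomial $P$ then yields $\EE[\sup_t|\Gamma_t^{\pm 1}|^p] < \infty$ for every $p \ge 2$; the identical argument with $C_j$, $\chi_t$, $\Lambda_t$, $\tilde B$ gives the bounds for $\Lambda^{\pm 1}$. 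Finally I would transfer these $\EE$-bounds to $\bar{\EE}$-bounds via the first-step remark that $\Gamma,\Lambda$ are $\mathcal{F}^B \vee \mathcal{F}^{\tilde B}$-measurable and the $\Om_2$-marginal is unchanged under $\bar{\PP}$. The main obstacle is precisely the one flagged above: naively bounding $|\Gamma_t|$ by $\exp\{\|B\|_\alpha^{1/\alpha}\}$ destroys integrability when $H$ is near $1/2$, so the proof \emph{must} use the nilpotent structure to replace the exponential bound by a polynomial one in the iterated integrals — this is where \textbf{(H1)} is essential and where care is needed to verify that $\exp\{\mathcal{K}_t\}$ is genuinely polynomial of bounded degree in those finitely many iterated integrals.
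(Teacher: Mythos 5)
Your final ``resolution'' rests on a claim that is false in general: nilpotency in the sense of {\bf (H1)} is the vanishing of iterated \emph{Lie brackets} of the $A_j$'s, and this does not make the matrices $\mathcal{K}_t$ nilpotent, so $\Gamma_t=\exp\{\mathcal{K}_t\}$ is \emph{not} a polynomial of bounded degree in the iterated integrals. A counterexample: take $A_j(t)=a_j(t)I$; all brackets vanish ($N_0=1$), yet $\Gamma_t=\exp\{-\sum_j\int_0^t a_j(s)\,dB^j_s\}\,I$ is a genuine exponential (equivalently, the Chen--Fliess series for $\Gamma_t$ does not truncate under {\bf (H1)}, because matrix \emph{products}, unlike brackets, need not vanish). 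In that particular example the conclusion survives because the exponent is Gaussian, but your general argument --- bounding $\sup_t|\Gamma_t^{\pm1}|$ by a fixed polynomial in the $B^{k,\ell}_t$ and applying H\"older --- collapses, so the proposal as written has a genuine gap at its decisive step.

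The irony is that the route you considered and then discarded is the correct one, and is essentially the paper's proof. The paper invokes the growth estimate for linear Young/rough differential equations (\cite[Proposition 8.12]{FH20}), which gives $\sup_{t\in[0,T]}|\Gamma_t|\leq C\exp\{C\Vert B\Vert_{\alpha;[0,T]}^{1/\alpha}\}$, and the same for $\Lambda$ and (after writing the linear equations satisfied by $\Gamma^{-1},\Lambda^{-1}$ via the Young It\^o formula) for the inverses; Fernique's theorem then gives all moments because $1/\alpha<2$. Your objection --- that one would need $\alpha$ ``close to $1$,'' which fails for $H$ near $1/2$ --- is an arithmetic slip: the hypothesis $\alpha\in(1/2,H)$ already forces $1/\alpha<2$ regardless of how close $H$ is to $1/2$, and $\EE\exp\{\lambda\Vert B\Vert_\alpha^{\theta}\}<\infty$ for every $\lambda$ whenever $\theta<2$. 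This is precisely why the theorem asserts finite moments in the Young regime while its counterpart for $H\in(1/3,1/2)$ (Theorem \ref{solution estimate1}) only asserts pathwise finiteness: there $1/\alpha>2$ and the exponential bound is no longer integrable. Your preliminary reduction of $\bar{\mathbb{E}}$-moments to $\mathbb{E}$-moments via the unchanged $\Omega_2$-marginal is correct and harmless, but the core of the proof needs to be replaced by the linear-equation estimate plus Fernique.
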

\begin{proof}
The estimates of solutions in the first line are the direct consequence of \cite[Proposition 8.12]{FH20}, and similarly, the estimates in the second line can also be obtained by first applying It\^{o} formula (\cite[Proposition 2.7]{Hu13}) to $\Gamma^{-1}$ and $\Lambda^{-1}$, respectively.
\end{proof} 

\subsubsection{ The case of $H\in(1/3,1/2)$} 
We first introduce the following definition of our another space of rough paths of the $\alpha$-H\"older regularity, where $\alpha=1/p$ if the control $\omega$
 satisfies that $\omega(s,t)\leq C|t-s|$.
\begin{definition}\label{def.6.3}
For $\alpha\in(1/3,1/2)$, define the space of $\alpha$-H\"older rough paths (over $V$), in symbols $D^{\alpha}([0,T];V)$, as those tuples $\mathbf{X}=(\mathbb{X}^1,\mathbb{X}^2)$ such that
{\footnotesize\begin{equation*}
\begin{aligned}
 \Vert\mathbb{X}^i\Vert_{i\alpha;[0,T]}:=\sup_{0\leq s<t\leq T}\frac{|\mathbb{X}^i_{s,t}|}{|t-s|^{i\alpha}}<\infty,\ \text{for each}\ i=1,2,
\end{aligned}
\end{equation*}}
and such that $\mathbf{X}$ satisfies the so called ``Chen's identity''. Meanwhile, we introduce the associated $\alpha$-H\"older rough path norm: $|||\mathbf{X}|||_{\alpha;[0,T]}:=\sum_{i=1}^2\Vert\mathbb{X}^i\Vert_{i\alpha;[0,T]}^\frac{1}{i}$.
\end{definition}

Then, we can obtain the following estimates for the pathwise solution $\Gamma(\omega_2),\Lambda(\omega_2)$, respectively.
\begin{theorem}\label{solution estimate1}
Let assumption {\bf (H1)} hold and $H\in(1/3,1/2)$, $\mathbf{B}\in D^\alpha([0,T];\mathbb{R}^{m_1})$ and $\tilde{\mathbf{B}}\in D^\alpha([0,T];\mathbb{R}^{m_2})$ be two rough paths with $\alpha\in(1/3,H)$. Let $\Gamma,\Lambda$ satisfy the RDE in \eqref{new state} and \eqref{new observation}, respectively, we then have for $\bar{\mathbb{P}}\text{-}a.s.\ \omega_2\in\Omega_2$ that
\begin{equation}
\begin{aligned}
&\sup_{t\in[0,T]}|\Gamma_t| <\infty,\quad   \sup_{t\in[0,T]}|\Lambda_t|
<\infty,\quad \sup_{t\in[0,T]}|\Gamma_t^{-1}|<\infty,\quad  \sup_{t\in[0,T]}|\Lambda_t^{-1}| <\infty\,. 
\end{aligned}\label{e.5.12} 
\end{equation} 
\end{theorem}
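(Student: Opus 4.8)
The plan is to exploit the explicit representation of $\Gamma$ and $\Lambda$ coming from the generalized Campbell--Baker--Hausdorff--Dynkin formula, combined with the sample-path regularity of the fBm rough paths $\mathbf{B},\tilde{\mathbf{B}}$. Since assumption {\bf (H1)} imposes nilpotency, we have $\Gamma_t=\exp\{\mathcal{K}_t\}$ and $\Lambda_t=\exp\{\chi_t\}$ with $\mathcal{K}_t,\chi_t$ being \emph{finite} sums (up to order $N_0$) of the iterated rough integrals $I_n(f_n)(\mathbf{B})$, $I_n(g_n)(\tilde{\mathbf{B}})$, established in the previous section via Theorem \ref{multi converge} and the integration-by-parts formula \eqref{general formula}--\eqref{multi limit}. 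Because $A_j(\cdot),C_j(\cdot)$ are smooth with bounded derivatives and the iterated integrals $B_t^{i_r,i_s}$ and $\tilde B_t^{i_r,i_s}$ of the rough paths are, for $\bar{\mathbb{P}}\text{-}$a.s. $\omega_2$, finite and continuous on the compact interval $[0,T]$, each summand in $\mathcal{K}_t$ and $\chi_t$ is a continuous (hence bounded) function of $t$ on $[0,T]$.

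First I would fix a $\bar{\mathbb{P}}$-null set outside of which $\mathbf{B}(\omega_2)\in D^\alpha([0,T];\RR^{m_1})$ and $\tilde{\mathbf{B}}(\omega_2)\in D^\alpha([0,T];\RR^{m_2})$ for some $\alpha\in(1/3,H)$, so that all the level-one and level-two increments $\mathbb{B}^1_{s,t},\mathbb{B}^2_{s,t}$ obey the H\"older bounds in Definition \ref{def.6.3}; in particular the sup-norms $|||\mathbf{B}(\omega_2)|||_{\alpha;[0,T]}$ and $|||\tilde{\mathbf{B}}(\omega_2)|||_{\alpha;[0,T]}$ are finite. Next, for each multi-index $I=(i_1,\dots,i_n)$ with $n\le N_0$ I would invoke the representation \eqref{multi limit} together with Theorem \ref{multi converge}, and observe that the iterated rough integrals appearing there are bounded by a finite constant $C(\omega_2)$ depending polynomially on $|||\mathbf{B}(\omega_2)|||_{\alpha;[0,T]}$ and on $\sup_{t}|\partial^m f_n|$ (which is finite by smoothness of the $A_j$'s). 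Summing the finitely many such terms gives $\sup_{t\in[0,T]}|\mathcal{K}_t|<\infty$ and $\sup_{t\in[0,T]}|\chi_t|<\infty$ for a.e. $\omega_2$.

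Finally, since $V\mapsto \exp\{V\}$ is a locally Lipschitz (indeed entire) map on $\RR^{n\times n}$ and $\RR^{k_2\times k_2}$, the pathwise bounds on $\mathcal{K},\chi$ immediately yield
\begin{equation*}
\sup_{t\in[0,T]}|\Gamma_t|=\sup_{t\in[0,T]}|\exp\{\mathcal{K}_t\}|\le \exp\{\sup_{t\in[0,T]}|\mathcal{K}_t|\}<\infty,
\end{equation*}
and likewise for $\Lambda$. For the inverses one uses $\Gamma_t^{-1}=\exp\{-\mathcal{K}_t\}$ and $\Lambda_t^{-1}=\exp\{-\chi_t\}$ (the inverse of a matrix exponential), so the very same bound applies with $\mathcal{K}_t$ replaced by $-\mathcal{K}_t$, giving $\sup_{t\in[0,T]}|\Gamma_t^{-1}|<\infty$ and $\sup_{t\in[0,T]}|\Lambda_t^{-1}|<\infty$ for $\bar{\mathbb{P}}\text{-}$a.s. $\omega_2\in\Omega_2$, which is \eqref{e.5.12}. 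An alternative route, avoiding the explicit CBHD series, is to apply the a priori bounds for RDE solutions (e.g. \cite[Theorem 10.14]{FV10} or \cite[Theorem 8.5]{FH20}), which bound $\sup_t|\Gamma_t|$ in terms of the driver's $p$-variation and the linear vector fields $\Gamma\mapsto-\Gamma A_j(t)$; running the same argument on the RDE satisfied by $\Gamma^{-1}$ (obtained from the rough It\^o formula, as in Theorem \ref{solution estimate2}) controls the inverse. The main obstacle, modest in this nilpotent setting, is bookkeeping: one must check uniformly in $t$ that the iterated integrals and the $t$-derivatives of the Lie brackets $f_n,g_n$ stay bounded, which reduces to the finiteness of the CBHD expansion and compactness of $[0,T]$; there is no genuine analytic difficulty because nilpotency removes the need to control an infinite series.
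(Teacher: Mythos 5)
Your proposal is correct, but your primary route differs from the paper's. The paper's proof is a two-line appeal to the a priori bound for linear RDEs in \cite[Proposition 8.13]{FH20} for $\sup_t|\Gamma_t|$ and $\sup_t|\Lambda_t|$, followed by writing down the RDEs satisfied by $\Gamma^{-1}$ and $\Lambda^{-1}$ via the rough It\^o formula and applying the same estimate — essentially the ``alternative route'' you sketch at the end. Your main argument instead exploits the nilpotency in {\bf (H1)} through the Campbell--Baker--Hausdorff--Dynkin representation $\Gamma_t=\exp\{\mathcal{K}_t\}$, $\Lambda_t=\exp\{\chi_t\}$, bounding the finitely many iterated rough integrals in $\mathcal{K}_t,\chi_t$ pathwise (which requires the Lyons extension for levels $n\geq 3$, implicitly covered by the paper's citation of \cite[Theorems 3.1.2, 3.1.3]{LQ02} in Theorem \ref{multi converge}) and then using $\Gamma_t^{-1}=\exp\{-\mathcal{K}_t\}$, which handles the inverses for free without deriving a separate RDE. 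What your approach buys is an elementary, self-contained bound and a cleaner treatment of the inverses; what it costs is reliance on the identity $\Gamma_t=\exp\{\mathcal{K}_t\}$ in the rough regime, which the paper only establishes implicitly (as the common limit of $\Gamma_t^\vare=\exp\{\mathcal{K}_t^\vare\}$ via Theorems \ref{rough WZ} and \ref{multi converge}) rather than as a stated theorem as in the Young case — you should make that uniqueness-of-limits step explicit. The paper's route is shorter and does not need nilpotency for the bound itself, only the linearity of the vector fields. Both arguments are valid.
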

\begin{proof}
For almost surely $\omega_2$, the estimates of pathwise solutions for the first two terms are the  direct consequence of \cite[Proposition 8.13]{FH20}. For the estimates of the last two terms, we can get the equations satisfied by $\Gamma^{-1}$ and $\Lambda^{-1}$ by applying rough It\^{o}'s formula, then the estimates of the last two terms can be obtained similarly.
\end{proof}

\subsubsection{The existence and uniqueness of solution Y to transformed equation}

First, we introduce some solution spaces  as follows
{\footnotesize\begin{equation*}
\mathcal{S}^p([0,T];\mathbb{R}^n)=\Big\{x\big|x\mbox{ is }\mathbb{R}^n\mbox{-valued}\ \FF\mbox{-adapted process such that}\ \bar{\mathbb{E}}\Big[\sup_{0\leq t\leq T}|x_t|^p\Big]<\infty\Big\},
\end{equation*}}
with norm $\Vert x\Vert_p^p:=\bar{\mathbb{E}}\big[\sup_{0\leq t\leq T}|x_t|^p\big]$;
{\footnotesize\begin{equation*}
\mathcal{M}^{2,p}([0,T];\mathbb{R}^n)=\Big\{z^i\big|z^i\mbox{ is }\mathbb{R}^n\mbox{-valued}\ \FF\mbox{-adapted process such that}\ \bar{\mathbb{E}}\bigg[\Big(\int_0^T|z^i_t|^2dt\Big)^{\frac{p}{2}}\bigg]<\infty\Big\},
\end{equation*}}
with norm $\Vert z^i\Vert_{2,p}^p:=\bar{\mathbb{E}}\big[(\int_0^T|z^i_t|^2dt)^{\frac{p}{2}}\big]$, for $i=1,\cdots,k_1(k_2)$.

Now we can state the following theorem.
\begin{theorem} Under the assumptions {\bf (H1)} and {\bf (H2)}, for any $u\in U$, the state equation in \eqref{new state}
has a unique solution    $Y$. 
\end{theorem}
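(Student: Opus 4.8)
The plan is to read the first line of \eqref{new state} as a classical It\^o SDE driven by $W$ whose coefficients carry the random factors $\Gamma_t,\Gamma_t^{-1}$, and to use that $\Gamma$ is adapted to $\mathcal{F}^B$ and hence independent of $W$. Set $\tilde b(t,y):=\Gamma_t b(t,\Gamma_t^{-1}y,u_t)$ and $\tilde\sigma_j(t,y):=\Gamma_t\sigma_j(t,\Gamma_t^{-1}y,u_t)$, so the state equation reads $Y_t=x+\int_0^t\tilde b(s,Y_s)\,ds+\sum_{j=1}^{k_1}\int_0^t\tilde\sigma_j(s,Y_s)\,dW^j_s$, with $\Gamma$ the solution constructed in Sections~4--5. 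For a.e.\ fixed $\om_2\in\Om_2$ the paths $\Gamma_\cdot(\om_2)$ and $\Gamma_\cdot^{-1}(\om_2)$ are deterministic and bounded on $[0,T]$ by Theorem~\ref{solution estimate2} (when $H>1/2$) and Theorem~\ref{solution estimate1} (when $1/3<H<1/2$), so with $\om_2$ frozen this is an ordinary SDE in $W$ with bounded, deterministic, time- and $\om_2$-dependent coefficients.

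I would then record two facts about $\tilde b,\tilde\sigma_j$. First, by {\bf (H2)} the derivatives $b_X,\sigma_X$ are bounded, so $b(t,\cdot,u),\sigma_j(t,\cdot,u)$ are globally Lipschitz in $x$ with a deterministic constant; together with the bounds on $\Gamma,\Gamma^{-1}$ this makes $\tilde b(t,\cdot),\tilde\sigma_j(t,\cdot)$ Lipschitz in $y$ with the random constant $L(\om_2):=C\sup_t|\Gamma_t|\sup_t|\Gamma_t^{-1}|<\infty$, finite for a.e.\ $\om_2$. Second, the bound $|b|+|\sigma_j|\le C(1+|x|+|u|)$ of {\bf (H2)} gives $|\tilde b(t,y)|+|\tilde\sigma_j(t,y)|\le C(\om_2)(1+|y|+|u_t|)$, and since $u\in\UU_{ad}$ has finite moments of every order, the free term $\int_0^\cdot\tilde b(s,0)\,ds+\sum_j\int_0^\cdot\tilde\sigma_j(s,0)\,dW^j_s$ lies in $\mathcal{S}^p([0,T];\mathbb{R}^n)$ for every $p\ge2$. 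With these two facts, for a.e.\ fixed $\om_2$ the Picard map $\Psi\colon Y\mapsto x+\int_0^\cdot\tilde b(s,Y_s)\,ds+\sum_j\int_0^\cdot\tilde\sigma_j(s,Y_s)\,dW^j_s$ is a contraction on $\mathcal{S}^p([0,T];\mathbb{R}^n)$ --- by the Burkholder--Davis--Gundy inequality and Gr\"onwall's lemma, equivalently after passing to an $L(\om_2)$-weighted norm --- and hence has a unique fixed point $Y(\om_2,\cdot)$, which satisfies the usual a priori $\mathcal{S}^p$-estimate. Reassembling over $\om_2$, the process $Y$ is $\mathcal{F}\otimes\mathcal{B}([0,T])$-measurable and $\mathbb{F}$-adapted (being the a.s.\ limit of the $\mathbb{F}$-adapted Picard iterates), solves the state equation in \eqref{new state}, and belongs to $\mathcal{S}^p$ after integrating in $\om_2$ using the $L^p$-bounds on $\sup_t|\Gamma_t|,\sup_t|\Gamma_t^{-1}|$ from Theorem~\ref{solution estimate2} (for $H>1/2$); in the case $1/3<H<1/2$ one instead localizes along the stopping times $\tau_N:=\inf\{t:\sup_{s\le t}(|\Gamma_s|\vee|\Gamma_s^{-1}|)\ge N\}\wedge T$, solves on each $[0,\tau_N]$, and lets $N\to\infty$ (this localization can in fact replace the conditioning argument throughout). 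Uniqueness in $\mathcal{S}^p$ is the same BDG--Gr\"onwall estimate applied to the difference of two solutions.

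The single non-routine point is that the Lipschitz and growth constants of $\tilde b,\tilde\sigma_j$ are \emph{random}, controlled by $\sup_t|\Gamma_t|$ and $\sup_t|\Gamma_t^{-1}|$ rather than by an absolute constant; this is precisely why one exploits the independence of $\Gamma$ from $W$ to freeze $\om_2$ (or, equivalently, localizes by the stopping times $\tau_N$), and where the pathwise boundedness statements of Theorems~\ref{solution estimate2}--\ref{solution estimate1} enter. Everything else --- the contraction estimate, the BDG bounds, Gr\"onwall --- mirrors the classical Lipschitz SDE theory, so I would only sketch it.
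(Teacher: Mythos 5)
Your proposal is correct and follows essentially the same route as the paper: transform the equation into a classical It\^o SDE in $W$ whose coefficients $\tilde b(t,y)=\Gamma_t b(t,\Gamma_t^{-1}y,u_t)$, $\tilde\sigma_j(t,y)=\Gamma_t\sigma_j(t,\Gamma_t^{-1}y,u_t)$ carry a Lipschitz constant controlled by $\sup_t|\Gamma_t|\sup_t|\Gamma_t^{-1}|$, verify the integrability of the free terms (using Theorem \ref{solution estimate2} for $H>1/2$ and the pathwise bounds of Theorem \ref{solution estimate1} with $\om_2$ frozen for $1/3<H<1/2$), and then invoke the standard Lipschitz well-posedness theory --- which the paper delegates to \cite[Theorem 3.3.1]{Zhang17} and you re-derive via Picard/BDG/Gr\"onwall, with the stopping-time localization as an optional variant.
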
 

\begin{proof}  We only need to verify the global Lipschitz condition and integrability condition (due to the $(\om_2)$-randomness of the coefficients). 

First we check the global Lipschitz condition for  $\tilde{b}(t,y,u_t):=\Gamma_tb(t,\Gamma_t^{-1}y,u_t)$ and $\tilde{\sigma}(t,y,u_t):=\Gamma_t\sigma(t,\Gamma_t^{-1}y,u_t)$ with respect to $y$, respectively. Indeed, for any $y_1,y_2\in\mathbb{R}^n$, $(\Gamma_t,u_t)\in\mathbb{R}^{n\times n}\times U$, for $  d\bar{\mathbb{P}}\text{-}a.s., ( \omega_1,\omega_2)\in \Omega$, we have by {\bf (H2)}
{\footnotesize\begin{equation*}
\begin{aligned}
&|\tilde{b}(t,y_1,u_t)-\tilde{b}(t,y_2,u_t)|=|\Gamma_tb(t,\Gamma_t^{-1}y_1,u_t)-\Gamma_tb(t,\Gamma_t^{-1}y_2,u_t)|\\
&=\bigg|\Gamma_t\int_0^1b_X(t,\Gamma_t^{-1}y_2+\theta(\Gamma_t^{-1}y_1-\Gamma_t^{-1}y_2),u_t)d\theta\Gamma_t^{-1}(y_1-
y_2)\bigg|\\
&\leq|\Gamma_t|\int_0^1|b_X(t,\Gamma_t^{-1}y_2+\theta(\Gamma_t^{-1}y_1-\Gamma_t^{-1}y_2),u)|d\theta|\Gamma_t^{-1}||y_1- y_2|\leq C|y_1-y_2|\,. 
\end{aligned}
\end{equation*}}
This verifies 
the global Lipschitz condition for  $\tilde{b}(t,y,u_t)$. 
Similarly, we can verify   
the global Lipschitz condition for $ \tilde{\sigma}(t,y,u_t) $. 

Next, we show the integrability of $\tilde{b}(\cdot,0,0),\tilde{\sigma}(\cdot,0,0)$. Indeed, from {\bf (H2)}, we have
\begin{equation}\label{integrability}
\begin{aligned}
\bar{\mathbb{E}}\bigg|\int_0^T\tilde{b}(t,\om_2,0,0)dt\bigg|^p 
 &\leq\bar{\mathbb{E}}\bigg(\int_0^T|\Gamma_t(\om_2)b(t,0,0)|dt\bigg)^p \leq C\bar{\mathbb{E}}\bigg(\int_0^T|\Gamma_t(\om_2)|dt\bigg)^p\\
 &\leq C\bar{\mathbb{E}}\bigg[\sup_{t\in[0,T]}|\Gamma_t(\om_2)|^p\bigg].
\end{aligned}
\end{equation}
 
 To show the finiteness of the above last expectation, 
 we divide  our discussion into the two cases $H\in (1/2, 1)$ and $H\in (1/3, 1/2)$:
 
\noindent Case  $H\in(1/2,1)$. The  finiteness of  $\bar{\mathbb{E}}\Big[\sup_{t\in[0,T]}|\Gamma_t|^p\Big]<\infty$ is implied by  Theorem \ref{solution estimate2}.
This yields  $\bar{\mathbb{E}}\Big|\int_0^T\tilde{b}(t,\om_2,0,0)dt\Big|^p<\infty$.
 
\noindent Case  $H\in(1/3,1/2)$. For $\bar{\mathbb{P}}\text{-}a.s.\ \omega_2\in\Omega_2$, the solution $\Gamma_t(\omega_2)$ to RDE 
 \eqref{new state} is a sample path which does not contain $\omega_1\in\Omega_1$. So by Theorem \ref{solution estimate1}, we have $\bar{\mathbb{E}}^{\omega_1}\Big[\sup_{t\in[0,T]}|\Gamma_t(\omega_2)|^p\Big]=\sup_{t\in[0,T]}|\Gamma_t(\omega_2)|^p<\infty,$    where $\bar{\mathbb{E}}^{\omega_1}$ denotes the fact that it only take expectation for $\omega_1$ of Brownian motions when  $\omega_2$ of fBms is fixed. This means that    $\bar{\mathbb{P}}\text{-}a.s.\ \omega_2$, $\bar{\mathbb{E}}^{\om_1} \Big|\int_0^T\tilde{b}(t,0,0)dt\Big|^p<\infty$.

Similarly, we can obtain that $\bar{\mathbb{E}} \Big(\int_0^T|\tilde{\sigma}^r(t,\om_2,0,0)|^2dt\Big)^{\frac{p}{2}}<\infty$ as discussed above for two cases. 

The above analysis verifies  the conditions in \cite[Theorem 3.3.1]{Zhang17} which implies  that the first equation in \eqref{new state} admits a unique solution.  This also implies  the existence and uniqueness of \eqref{x} via transformation. 
\end{proof} 

\section{The   maximum principle for partially observed system}

In this section, we   obtain the  maximum principle
for our control problem. We shall use our transformations to transform our original problem to 
the ``classical problem'' \eqref{new state}, \eqref{new observation}, \eqref{new cost}. 
In the case $1/3<H<1/2$, we shall need to fix   $\omega_2$ to the optimization. This is 
because of the difference of the estimate \eqref{e.5.12} from that for the case $H\in (1/2, 1)$. 
However, except  the estimates we mentioned all other procedures  to obtain the maximum principle 
are the same regardless the value of $H$. Thus, 
we shall focus on the case    $H\in(1/2,1)$.
\subsection{ The maximum principle in the case of $H\in(1/2,1)$} 
\subsubsection{Variation}
Now,  since $U$ is not necessarily convex,   we utilize the spike variation technique. 
Suppose $\bar{u}\in\UU_{ad}$ is the optimal control minimizing  the 
cost functional \eqref{new cost}. We want to find the (necessary) condition that 
$\bar u$ must satisfy (find the maximum principle for $\bar u$).
To this end we introduce 
the spike variation of $\bar u$ as follows: 
\begin{equation*}
\begin{aligned}
u^{  \epsilon}_t=u^{\tau, \epsilon}_t=
\begin{cases}
u,&\text{$t\in E_{\epsilon}:=[\tau,\tau+\epsilon]$},\\
\bar{u}_t,&\text{otherwise},
\end{cases}
\end{aligned}
\end{equation*}
where $0<\tau<T$ is arbitrarily fixed, $\epsilon>0$ is arbitrarily 
 chosen such that $[\tau,\tau+\epsilon]\subset[0,T]$, and $u\in  \UU_{ad}  $ is an arbitrary bounded 
 admissible control.  We hope to use the fact that $J(u^{  \epsilon })
 \ge J(\bar u )$ for all $\tau, \vare$ such that $
 \tau, \tau+\vare\in [0, T], \varepsilon>0$
 to obtain a necessary condition satisfied by $\bar u$. 
Usually,  it is hard to obtain the maximum principle 
satisfied by the optimal control $\bar u$ by observing $J(v)\ge J(\bar u)$ 
for all $v\in \UU_{ad}$ directly. The idea is then to choose $v$ as the above small perturbation 
$u^{  \epsilon}$ of $\bar u$  and to  expand 
$J(u^{  \epsilon }) $  in a neighbourhood of $\vare=0$:
$J(u^{  \epsilon }) = J(\bar u) +J_0(\bar u)\varepsilon+o(\varepsilon) $.
The condition $J(u^{  \epsilon })
 \ge J(\bar u )$ will then imply $J_0(\bar u)\geq0$ for all $\tau$. This will produce the maximum principle. 
 
 To expand  $J(u^{  \epsilon }) $  we   let  $(\bar{Y},Y^{\epsilon})$ solve  \eqref{new state} for the corresponding 
 control  $\bar{u},u^{\epsilon}$, respectively. 
For any function $\phi$ defined on  $[0, T]\times \RR^n\times  \RR^d $ denote
\begin{equation}
\begin{aligned}
&\bar{\phi}(s)=\phi(s,\Gamma_s^{-1}\bar{Y}_s,\bar{u}_s),\ \bar{\phi}_X(s)=\phi_X(s,\Gamma_s^{-1}\bar{Y}_s,\bar{u}_s),\ \bar{\phi}_{XX}(s)=\phi_{XX}(s,\Gamma_s^{-1}\bar{Y}_s,\bar{u}_s),\\
&\delta\phi(s)=\phi(s,\Gamma_s^{-1}\bar{Y}_s,u_s)-\bar{\phi}(s),\ \delta\phi_X(s)=\phi_X(s,\Gamma_s^{-1}\bar{Y}_s,u_s)-\bar{\phi}_X(s)\,.  
\end{aligned}
\end{equation}
We shall  apply the above notation with   $\phi$ being replaced by  $b,\sigma^{r_1},h$.

Then we introduce the following first and second order variational equations for $Y,\tilde{\rho}$, respectively:
\begin{equation}
\begin{aligned}
&Y_t^1=\int_0^t\Gamma_s\bar{b}_X(s)\Gamma_s^{-1}Y_s^1ds+\sum_{r_1=1}^{k_1}\int_0^t\Big\{\Gamma_s\bar{\sigma}^{r_1}_X(s)\Gamma_s^{-1}Y^1_s+\Gamma_s\delta\sigma^{r_1}(s)I_{E_{\epsilon}}\Big\}dW_s^{r_1},\\
&Y_t^2=\int_0^t\Big\{\Gamma_s\bar{b}_X(s)\Gamma_s^{-1}Y_s^2+\frac{1}{2}\Gamma_s\bar{b}_{XX}(s)\Gamma_s^{-1}Y_s^1\Gamma_s^{-1}Y_s^1+\Gamma_s\delta b(s)I_{E_{\epsilon}}\Big\}ds\\
&\quad+\sum_{r_1=1}^{k_1}\int_0^t\Big\{\Gamma_s\bar{\sigma}^{r_1}_X(s)\Gamma_s^{-1}Y^2_s+\frac{1}{2}\Gamma_s\bar{\sigma}_{XX}^{r_1}(s)\Gamma_s^{-1}Y^1_s\Gamma_s^{-1}Y^1_s+\Gamma_s\delta\sigma^{r_1}_X(s)\Gamma_s^{-1}Y^1_sI_{E_{\epsilon}}\Big\}dW_s^{r_1}\,,
\end{aligned}
\end{equation}
and
\begin{equation}\label{rho12}
\begin{aligned}
\tilde{\rho}_t^1&=\int_0^t\Big\{\tilde{\rho}_s^1(D^{-1}(s)\bar{h}(s))^{\top}+\bar{\tilde{\rho}}_s[D^{-1}(s)\bar{h}_X(s)\Gamma_s^{-1}Y_s^1]^{\top}+\bar{\tilde{\rho}}_s[D^{-1}(s)\delta h(s)I_{E_{\epsilon}}]^{\top}\Big\}d\zeta_s,\\
\tilde{\rho}_t^2&=\int_0^t\Big\{\tilde{\rho}_s^2(D^{-1}(s)\bar{h}(s))^{\top}+\tilde{\rho}_s^1[D^{-1}(s)\bar{h}_X(s)\Gamma_s^{-1}Y^1_s]^{\top}+\tilde{\rho}_s^1[D^{-1}(s)\delta h(s)I_{E_{\epsilon}}]^{\top}\\
&\quad+\bar{\tilde{\rho}}_s[D^{-1}(s)\bar{h}_X(s)\Gamma_s^{-1}Y_s^2]^{\top}+\frac{1}{2}\bar{\tilde{\rho}}_s[D^{-1}(s)\bar{h}_{XX}(s)\Gamma_s^{-1}Y_s^1\Gamma_s^{-1}Y_s^1]^{\top}\\
&\quad+\bar{\tilde{\rho}}_s[D^{-1}(s)\delta h_X(s)I_{E_{\epsilon}}\Gamma_s^{-1}Y_s^1]^{\top}\Big\}d\zeta_s,
\end{aligned}
\end{equation}
where
\begin{equation}\label{n.7.5}
\Gamma_s\bar{b}_{XX}(s)\Gamma_s^{-1}Y_s^1\Gamma_s^{-1}Y_s^1\triangleq
\begin{pmatrix}
\text{tr}\{\Gamma_s\bar{b}_{XX}^1(s)\Gamma_s^{-1}Y_s^1(\Gamma_s^{-1}Y_s^1)^{\top}\}\\
\vdots\\
\text{tr}\{\Gamma_s\bar{b}_{XX}^n(s)\Gamma_s^{-1}Y_s^1(\Gamma_s^{-1}Y_s^1)^{\top}\}
\end{pmatrix}\,,
\end{equation}
and $\Gamma_s\bar{\sigma}^{r_1}_{XX}(s)\Gamma_s^{-1}Y_s^1\Gamma_s^{-1}Y_s^1, 1\leq r_1\leq k_1$, $\bar{h}_{XX}(s)\Gamma_s^{-1}Y_s^1\Gamma_s^{-1}Y_s^1$ have similar notations.

\begin{lemma}\label{Ytilderhoestimate}
Under the assumptions {\bf (H1)-(H2)},   we have the following estimates for any $p\geq2$:  
\begin{equation}\label{estimate1}
\begin{aligned}
&\bar{\mathbb{E}}\bigg[\sup_{0\leq s\leq T}|Y^1_s|^p\bigg]\leq C\epsilon^{\frac{p}{2}},\ \bar{\mathbb{E}}\bigg[\sup_{0\leq s\leq T}|Y^2_s|^p\bigg]\leq C\epsilon^p,\\
& \bar{\mathbb{E}}\bigg[\sup_{0\leq s\leq T}|\tilde{\rho}^1_s|^p\bigg]\leq C\epsilon^{\frac{p}{2}},\ \bar{\mathbb{E}}\bigg[\sup_{0\leq s\leq T}|\tilde{\rho}^2_s|^p\bigg]\leq C\epsilon^p\,. 
\end{aligned}
\end{equation}
\end{lemma}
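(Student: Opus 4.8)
All four processes solve \emph{linear} stochastic differential equations: $Y^1,Y^2$ are driven by $W$ and $\tilde\rho^1,\tilde\rho^2$ by $\zeta$ (which is a Brownian motion under $\bar{\mathbb P}$), with coefficients built from $\Gamma,\Gamma^{-1}$, the derivatives $\bar b_X,\bar\sigma_X^{r_1},\bar h_X,\bar b_{XX},\bar\sigma_{XX}^{r_1},\bar h_{XX}$ (bounded by (H2)), the matrix $D^{-1}$ (bounded by (H1)), the density $\bar{\tilde\rho}$, and the perturbations $\delta b,\delta\sigma^{r_1},\delta h,\delta\sigma_X^{r_1},\delta h_X$, the last of which occur only multiplied by $I_{E_\epsilon}$. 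By Theorem \ref{solution estimate2} when $H\in(1/2,1)$ (resp.\ Theorem \ref{solution estimate1} when $H\in(1/3,1/2)$, where one argues $\omega_2$-pathwise) the factors $\Gamma,\Gamma^{-1}$ have moments of every order (resp.\ are $\omega_2$-pathwise bounded); $\bar{\tilde\rho}$ has moments of every order by the preceding lemma; and $\bar Y$ has moments of every order by the solution spaces used in Section \ref{sect5}. Hence every coefficient $\Theta$ satisfies $\sup_{s\in[0,T]}\bar{\mathbb E}|\Theta_s|^q<\infty$ for all $q$, and in particular the four variational equations are well posed exactly as \eqref{new state} was.

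The four bounds are proved in the order $Y^1\to Y^2\to\tilde\rho^1\to\tilde\rho^2$, each later equation being forced by the earlier ones; in each case I would apply It\^o's formula to $|\cdot|^p$, take $\sup_t$ and $\bar{\mathbb E}$, estimate the martingale part by the Burkholder--Davis--Gundy inequality, and close by Gronwall's inequality. For $Y^1$ the homogeneous part produces $C\int_0^t\bar{\mathbb E}\big[\sup_{r\le s}|Y^1_r|^p\big]\,ds$ (when $1/3<H<1/2$ the matrices $\Gamma_s\bar b_X(s)\Gamma_s^{-1}$, $\Gamma_s\bar\sigma_X^{r_1}(s)\Gamma_s^{-1}$ are bounded deterministic functions once $\omega_2$ is frozen, which is what makes Gronwall directly applicable; when $H>1/2$ one uses the uniform-in-$s$ $L^q$-bounds on $\Gamma,\Gamma^{-1}$ and the standard moment estimate for linear SDEs), while the inhomogeneity $\Gamma_s\delta\sigma^{r_1}(s)I_{E_\epsilon}$ contributes, via H\"older's inequality in time and $|E_\epsilon|=\epsilon$,
\begin{equation*}
\bar{\mathbb E}\Big(\int_0^T|\Gamma_s\delta\sigma^{r_1}(s)|^2I_{E_\epsilon}(s)\,ds\Big)^{p/2}\le\epsilon^{\frac{p}{2}-1}\int_{E_\epsilon}\bar{\mathbb E}|\Gamma_s\delta\sigma^{r_1}(s)|^p\,ds\le C\epsilon^{p/2}.
\end{equation*}
Gronwall then gives $\bar{\mathbb E}[\sup_{[0,T]}|Y^1|^p]\le C\epsilon^{p/2}$ for every $p\ge2$; applied with $2p$ in place of $p$, this says that every factor $Y^1$ occurring later in a product costs one extra power $\epsilon^{1/2}$.

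For $Y^2,\tilde\rho^1,\tilde\rho^2$ the scheme is identical; the only new work is the bookkeeping of powers of $\epsilon$, with all non-variational factors absorbed by the $L^q$-moments of $\Gamma,\Gamma^{-1},\bar{\tilde\rho}$ through H\"older's inequality in $\omega$. A forcing term contributes $\epsilon^{1/2}$ for each $I_{E_\epsilon}$ that sits inside a diffusion integral (since after BDG $\bar{\mathbb E}(\int_{E_\epsilon}|g_s|^2ds)^{p/2}\le\epsilon^{\frac{p}{2}-1}\int_{E_\epsilon}\bar{\mathbb E}|g_s|^p\,ds$), $\epsilon^{1}$ for each $I_{E_\epsilon}$ inside a drift integral, $\epsilon^{1/2}$ for each factor $Y^1$ or $\tilde\rho^1$, and $\epsilon^{1}$ for each factor $Y^2$, $\tilde\rho^2$, or $(Y^1)^2$. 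Checking the equations term by term, every forcing term of the $Y^2$-equation has weight $\ge1$, every forcing term of the $\tilde\rho^1$-equation has weight $\ge\tfrac12$, and every forcing term of the $\tilde\rho^2$-equation has weight $\ge1$; Gronwall's inequality therefore yields $\bar{\mathbb E}[\sup_{[0,T]}|Y^2|^p]\le C\epsilon^p$, $\bar{\mathbb E}[\sup_{[0,T]}|\tilde\rho^1|^p]\le C\epsilon^{p/2}$ and $\bar{\mathbb E}[\sup_{[0,T]}|\tilde\rho^2|^p]\le C\epsilon^p$ for all $p\ge2$.

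The only genuinely delicate points are the two just indicated: converting each indicator $I_{E_\epsilon}$ into a factor $\epsilon^{1/2}$ (H\"older in time after BDG, with exponent arithmetic $\epsilon^{\frac{p}{2}-1}\cdot\epsilon=\epsilon^{p/2}$) and each lower-order variational process into a factor $\epsilon^{1/2}$ (which dictates both the bootstrap order and the need to carry all moment exponents), and handling the fact that for $H>1/2$ the homogeneous coefficients $\Gamma_s\bar\phi_X(s)\Gamma_s^{-1}$ are unbounded --- this is dealt with either by the standard estimate for linear SDEs with $L^q$-integrable coefficients or, as in the rest of Section 7, by freezing $\omega_2$, after which those coefficients become bounded deterministic functions and Gronwall's inequality applies verbatim.
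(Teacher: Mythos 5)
Your proposal is correct and follows essentially the same route as the paper's appendix proof: the classical $L^p$-estimate/BDG for the linear variational SDEs, H\"older's inequality to convert each indicator $I_{E_\epsilon}$ into $\epsilon^{1/2}$ (diffusion) or $\epsilon$ (drift), and a bootstrap in which each factor of $Y^1$ or $\tilde\rho^1$ costs $\epsilon^{1/2}$ via the already-established bound with $2p$ in place of $p$; your term-by-term weight accounting matches the paper's explicit estimates $\mathbb{I},\mathbb{II}$ and $\text{I}$--$\text{V}$ exactly. You are in fact slightly more careful than the paper in flagging that the homogeneous coefficients $\Gamma_s\bar b_X(s)\Gamma_s^{-1}$ are only $L^q$-integrable rather than bounded when $H>1/2$, a point the paper absorbs silently into its invocation of the ``classical $L^p$-estimate.''
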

\begin{proof}
The proof is postponed to the appendix.
\end{proof}

\begin{lemma}\label{estimate2}
Under the assumptions {\bf (H1)-(H2)},  we have for any $p\geq2$:  
\begin{equation}
\begin{aligned}
&\bar{\mathbb{E}}\bigg[\sup_{0\leq s\leq T}|Y^{\epsilon}_s-\bar{Y}_s-Y_s^1-Y_s^2|^p\bigg]=o(\epsilon^p),\\
& \bar{\mathbb{E}}\bigg[\sup_{0\leq s\leq T}|\tilde{\rho}^{\epsilon}_s-\bar{\tilde{\rho}}_s-\tilde{\rho}_s^1-\tilde{\rho}_s^2|^p\bigg]=o(\epsilon^p)\,. 
\end{aligned}
\end{equation}
\end{lemma}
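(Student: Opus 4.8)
Proof proposal for Lemma 7.12 (the estimate $\bar{\mathbb{E}}[\sup_s|Y^\epsilon_s-\bar Y_s-Y^1_s-Y^2_s|^p]=o(\epsilon^p)$ and the analogous one for $\tilde\rho$).

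The plan is to carry out the standard second-order Taylor expansion argument for the spike variation, but keeping careful track of the random coefficients $\Gamma_t(\omega_2)$ and $\Gamma_t^{-1}(\omega_2)$ introduced by the transformation. Write $\tilde b(t,y,u)=\Gamma_t b(t,\Gamma_t^{-1}y,u)$ and $\tilde\sigma^{r_1}(t,y,u)=\Gamma_t\sigma^{r_1}(t,\Gamma_t^{-1}y,u)$, and set $\Theta^\epsilon_t:=Y^\epsilon_t-\bar Y_t-Y^1_t-Y^2_t$. First I would subtract the equations satisfied by $Y^\epsilon$, $\bar Y$, $Y^1$, $Y^2$ and write an integral equation for $\Theta^\epsilon$ whose drift and diffusion coefficients are $\tilde b_X$, $\tilde\sigma^{r_1}_X$ evaluated along $\bar Y$, times $\Theta^\epsilon_s$, plus a remainder term $R^\epsilon_s$ collecting all the higher-order Taylor terms. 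Concretely, using the second-order Taylor formula with integral remainder for $b,\sigma^{r_1}$ in the $x$ variable and the boundedness of $b_X,b_{XX},\sigma_X,\sigma_{XX}$ from {\bf (H2)}, the remainder decomposes into (a) terms involving $(Y^\epsilon-\bar Y)^{\otimes 3}$ against $b_{XX}$-type differences (controllable because $Y^\epsilon-\bar Y=Y^1+Y^2+\Theta^\epsilon$ and $\bar{\mathbb{E}}\sup|Y^1|^p\le C\epsilon^{p/2}$, $\bar{\mathbb{E}}\sup|Y^2|^p\le C\epsilon^p$ from Lemma \ref{Ytilderhoestimate}), (b) the cross terms $\tfrac12 b_{XX}(Y^1+Y^2)^{\otimes2}-\tfrac12 b_{XX}(Y^1)^{\otimes2}$ which are $O(\epsilon^{3/2})$ in $L^p$, and (c) the terms supported on $E_\epsilon=[\tau,\tau+\epsilon]$ coming from $\delta b,\delta\sigma^{r_1},\delta\sigma^{r_1}_X$, which carry a factor $I_{E_\epsilon}$ and hence an extra $\epsilon$ (or $\epsilon^{1/2}$ for the stochastic integral) after using boundedness of the coefficients.

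The second step is a Gronwall/BDG argument. Apply the Burkholder--Davis--Gundy inequality to the stochastic integral terms and Hölder to the drift, take $\sup_{0\le t\le T}$ and then $\bar{\mathbb{E}}$; this gives
\[
\bar{\mathbb{E}}\Big[\sup_{0\le s\le t}|\Theta^\epsilon_s|^p\Big]\le C\int_0^t\bar{\mathbb{E}}\Big[\sup_{0\le r\le s}|\Theta^\epsilon_r|^p\Big]\,ds+\bar{\mathbb{E}}\Big[\sup_{0\le s\le T}|R^\epsilon_s|^p\Big],
\]
where the constant $C$ absorbs $\|\tilde b_X\|_\infty$, $\|\tilde\sigma_X\|_\infty$ — these are bounded because $b_X,\sigma_X$ are bounded by {\bf (H2)} and $\Gamma,\Gamma^{-1}$ cancel in $\tilde b_X=\Gamma b_X(\cdot,\Gamma^{-1}\cdot,\cdot)\Gamma^{-1}$. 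Then Gronwall's lemma yields $\bar{\mathbb{E}}[\sup_s|\Theta^\epsilon_s|^p]\le C\,\bar{\mathbb{E}}[\sup_s|R^\epsilon_s|^p]$, so it remains to show the latter is $o(\epsilon^p)$, which follows by collecting the estimates (a)--(c) above together with Lemma \ref{Ytilderhoestimate}; the genuinely $o(\epsilon^p)$ (rather than merely $O(\epsilon^p)$) contribution comes precisely from the continuity of $b_{XX},\sigma_{XX}$ in $(x,u)$ combined with $Y^\epsilon-\bar Y\to 0$, which forces the remainder integrand to vanish in $L^p$ uniformly, upgrading $O$ to $o$.

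For the $\tilde\rho$ part I would run the same scheme on equation \eqref{tilderho}, expanding $h$ to second order and using the variational equations \eqref{rho12}; the only additional point is that the coefficient $D^{-1}(s)\bar h(s)$ multiplying $\tilde\rho$ itself is bounded (by {\bf (H1)}: $|D^{-1}|\le C$, and $h$ bounded by {\bf (H2)}), so the same BDG+Gronwall argument closes, and the coupling to $Y^\epsilon$ through $h_X(\cdot,\Gamma^{-1}Y^\epsilon,\cdot)$ is handled by inserting the already-established estimate on $Y^\epsilon-\bar Y-Y^1-Y^2$.

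The main obstacle I anticipate is bookkeeping rather than conceptual: one must organize the remainder $R^\epsilon$ so that every term is visibly either (i) a product of at least three ``small'' factors from $\{Y^1,Y^2,\Theta^\epsilon\}$, or (ii) carries $I_{E_\epsilon}$, or (iii) is a difference of $b_{XX}$-type coefficients along nearby arguments that tends to $0$; and one must be careful that the random factors $\Gamma_s,\Gamma_s^{-1}$ appearing in $Y^1,Y^2$ (e.g. the terms $\Gamma_s\bar b_{XX}(s)\Gamma_s^{-1}Y^1_s\Gamma_s^{-1}Y^1_s$) are handled by the moment bounds $\bar{\mathbb{E}}\sup_t|\Gamma_t|^p<\infty$, $\bar{\mathbb{E}}\sup_t|\Gamma_t^{-1}|^p<\infty$ from Theorem \ref{solution estimate2} (case $H>1/2$) via Hölder's inequality with suitably large exponents. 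Since the paper defers this to the appendix, I would simply structure the write-up as: (1) derive the equation for $\Theta^\epsilon$; (2) decompose $R^\epsilon$; (3) estimate each piece of $R^\epsilon$ in $L^p$ using Lemma \ref{Ytilderhoestimate}, Theorem \ref{solution estimate2}, boundedness from {\bf (H2)}, and dominated convergence for the $o(\epsilon^p)$ upgrade; (4) conclude by BDG and Gronwall; (5) repeat verbatim for $\tilde\rho$.
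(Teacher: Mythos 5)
Your proposal follows essentially the same route as the paper's appendix proof: write the SDE satisfied by the difference $\Theta^\epsilon$ (resp.\ $\tilde\rho^\epsilon-\bar{\tilde\rho}-\tilde\rho^1-\tilde\rho^2$) via a second-order Taylor expansion with integral remainder, absorb the linear part by the classical $L^p$-estimate/Gronwall, and bound each remainder term by H\"older using Lemma \ref{Ytilderhoestimate}, the moment bounds on $\Gamma,\Gamma^{-1}$, the indicator $I_{E_\epsilon}$, and the continuity of the second derivatives for the $o(\epsilon^p)$ upgrade. The only notable organizational point, which you correctly anticipate, is that the $\tilde\rho$ estimate genuinely uses the already-proved $Y$ estimate (the paper's term $\mathbb{A}_1$), so the two bounds must be established in that order.
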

\begin{proof}
The proof is given  in the appendix.
\end{proof}
Next, we consider our cost functional under new probability $\bar{\mathbb{P}}$
\begin{equation}
\begin{aligned}
J(u(\cdot))&=\mathbb{E}\bigg[\Phi(\Gamma_T^{-1}Y^u_T)+\int_0^Tf(t,\Gamma_t^{-1}Y^u_t,u_t)dt\bigg]\\
&=\bar{\mathbb{E}}\bigg[\tilde{\rho}^u_T\Phi(\Gamma_T^{-1}Y^u_T)+\int_0^T\tilde{\rho}^u_tf(t,\Gamma_t^{-1}Y^u_t,u_t)dt\bigg]\,,
\end{aligned}
\end{equation}
and define
\begin{equation}\label{hatJ}
\begin{aligned}
\hat{J}:=&\bar{\mathbb{E}}\bigg[\int_0^T\bigg\{\bar{\tilde{\rho}}_t\bigg[\langle \bar{f}_X(t),\Gamma_t^{-1}(Y_t^1+Y_t^2)\rangle+\frac{1}{2}\langle\bar{f}_{XX}(t)\Gamma_t^{-1}Y_t^1,\Gamma_t^{-1}Y_t^1\rangle+\delta f(t)I_{E_{\epsilon}}\bigg]\\
&+(\tilde{\rho}_t^1+\tilde{\rho}_t^2)\bar{f}(t)+\tilde{\rho}_t^1\langle\bar{f}_X(t),\Gamma_t^{-1}Y_t^1\rangle\bigg\}dt\bigg]\\
+&\bar{\mathbb{E}}\bigg[\bar{\tilde{\rho}}_T\bigg[\langle\Phi_X(\Gamma_T^{-1}\bar{Y}_T),\Gamma_T^{-1}(Y_T^1+Y^2_T)\rangle+\frac{1}{2}\langle\Phi_{XX}(\Gamma_T^{-1}\bar{Y}_T)\Gamma_T^{-1}Y_T^1,\Gamma_T^{-1}Y_T^1\rangle\bigg]\\
&+(\tilde{\rho}_T^1+\tilde{\rho}_T^2)\Phi(\Gamma_T^{-1}\bar{Y}_T)+\tilde{\rho}_T^1\langle\Phi_X(\Gamma_T^{-1}\bar{Y}_T),\Gamma_T^{-1}Y_T^1\rangle\bigg]\,. 
\end{aligned}
\end{equation}
\begin{lemma}\label{Jestimate}
Let  the assumptions {\bf (H1)-(H2)} hold. We have
\begin{equation}
J(u^{\epsilon})-J(\bar{u})=\hat{J}+o(\epsilon).
\end{equation}
\end{lemma}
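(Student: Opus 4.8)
The starting point is the $\bar{\mathbb{P}}$-representation of the cost,
\[
J(u(\cdot))=\bar{\mathbb{E}}\Big[\tilde{\rho}^u_T\Phi(\Gamma_T^{-1}Y^u_T)+\int_0^T\tilde{\rho}^u_tf(t,\Gamma_t^{-1}Y^u_t,u_t)\,dt\Big],
\]
together with the expansions from Lemmas \ref{Ytilderhoestimate} and \ref{estimate2}. Writing $\Delta Y_t:=Y^\epsilon_t-\bar Y_t=Y^1_t+Y^2_t+\varrho^Y_t$ and $\Delta\tilde\rho_t:=\tilde\rho^\epsilon_t-\bar{\tilde\rho}_t=\tilde\rho^1_t+\tilde\rho^2_t+\varrho^{\tilde\rho}_t$, one has, for every $p\ge2$, that $\sup_{t\in[0,T]}|Y^1_t|$ and $\sup_{t\in[0,T]}|\tilde\rho^1_t|$ are $O(\epsilon^{1/2})$ in $L^p(\bar{\Omega})$, that $\sup_{t\in[0,T]}|Y^2_t|$ and $\sup_{t\in[0,T]}|\tilde\rho^2_t|$ are $O(\epsilon)$, and that the remainders $\sup_{t\in[0,T]}|\varrho^Y_t|$, $\sup_{t\in[0,T]}|\varrho^{\tilde\rho}_t|$ are $o(\epsilon)$. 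The plan is to subtract $J(\bar u)$, Taylor expand $\Phi$ around $\Gamma_T^{-1}\bar Y_T$ and $f(t,\cdot,\cdot)$ in its state variable around $\Gamma_t^{-1}\bar Y_t$ to second order, substitute these expansions for $\Delta Y$ and $\Delta\tilde\rho$, and discard every contribution that is $o(\epsilon)$. For the running cost one first writes
\[
f(t,\Gamma_t^{-1}Y^\epsilon_t,u^\epsilon_t)=f(t,\Gamma_t^{-1}Y^\epsilon_t,\bar u_t)+\big[f(t,\Gamma_t^{-1}Y^\epsilon_t,u)-f(t,\Gamma_t^{-1}Y^\epsilon_t,\bar u_t)\big]I_{E_\epsilon},
\]
Taylor expands the first summand in the state as above, and in the second replaces $\Gamma_t^{-1}Y^\epsilon_t$ by $\Gamma_t^{-1}\bar Y_t$ up to a correction of size $O(|\Gamma_t^{-1}\Delta Y_t|)\,I_{E_\epsilon}$, which contributes $O(\epsilon^{3/2})=o(\epsilon)$ after integrating in $t$ and taking $\bar{\mathbb{E}}$, since $|E_\epsilon|=\epsilon$ and $\sup_{t\in[0,T]}|\Delta Y_t|=O(\epsilon^{1/2})$ in $L^p$.

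Expanding the products $\tilde\rho^\epsilon_t=\bar{\tilde\rho}_t+\tilde\rho^1_t+\tilde\rho^2_t+\varrho^{\tilde\rho}_t$ against the Taylor polynomials and using the orders above, every term pairing $Y^2$ or $\tilde\rho^2$ with $Y^1$ or $\tilde\rho^1$, pairing a remainder with anything, or involving $\Delta Y$ quadratically beyond the leading $Y^1$-by-$Y^1$ piece, is $O(\epsilon^{3/2})$ or smaller. The contributions of exact order $\epsilon$ that survive are precisely the five terms displayed inside the time integral of \eqref{hatJ} and the five terms inside its terminal bracket: the gradient terms linear in $Y^1+Y^2$ weighted by $\bar{\tilde\rho}$, the Hessian terms quadratic in $Y^1$ weighted by $\tfrac12\bar{\tilde\rho}$, the control-spike term $\bar{\tilde\rho}_t\,\delta f(t)\,I_{E_\epsilon}$, the zeroth-order terms weighted by $\tilde\rho^1+\tilde\rho^2$, and the cross terms $\tilde\rho^1_t\langle\bar f_X(t),\Gamma_t^{-1}Y^1_t\rangle$ and $\tilde\rho^1_T\langle\Phi_X(\Gamma_T^{-1}\bar Y_T),\Gamma_T^{-1}Y^1_T\rangle$. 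Taking $\bar{\mathbb{E}}$ of their sum is exactly $\hat J$, so the lemma reduces to showing that the aggregate of all the discarded terms is $o(\epsilon)$.

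This last verification is the real obstacle. The discarded terms are of two types. The ``algebraic'' errors --- products of the $Y^i,\tilde\rho^j$ and of the $o(\epsilon)$ remainders against factors of at most linear or quadratic growth --- are controlled by H\"older's inequality together with $\bar{\mathbb{E}}\big[\sup_{t\in[0,T]}|\Gamma_t|^p+\sup_{t\in[0,T]}|\Gamma_t^{-1}|^p\big]<\infty$ (Theorems \ref{solution estimate2} and \ref{solution estimate1}), with $\sup_\epsilon\bar{\mathbb{E}}\big[\sup_{t\in[0,T]}|\tilde\rho^\epsilon_t|^p\big]<\infty$ (the moment bound on $\tilde\rho^u$ recorded above, uniform in $\epsilon$ because $h$ is bounded), with uniform moment bounds on $\bar Y$ and $Y^\epsilon$, and with the growth and boundedness in {\bf (H2)} ($\bar f_X,\Phi_X$ of at most linear growth, $\bar f_{XX},\Phi_{XX},\delta f$ bounded). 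The ``Taylor remainder'' errors have the shape $\bar{\mathbb{E}}\big[\tilde\rho^\epsilon_T\,\big|\Phi_{XX}(\theta_T)-\Phi_{XX}(\Gamma_T^{-1}\bar Y_T)\big|\,|\Gamma_T^{-1}\Delta Y_T|^2\big]$ for an intermediate point $\theta_T$, and its running-cost analogue; here $|\Gamma_T^{-1}\Delta Y_T|^2$ is $O(\epsilon)$ in every $L^p$, while the Hessian difference is bounded by {\bf (H2)} and tends to $0$ in every $L^{p'}$ by dominated convergence (as $\Delta Y_T\to0$ in probability and $\Phi_{XX}$ is continuous), so the product is $o(1)\cdot O(\epsilon)=o(\epsilon)$. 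The spike term $\tilde\rho^\epsilon_t\big[f(t,\Gamma_t^{-1}Y^\epsilon_t,u)-f(t,\Gamma_t^{-1}Y^\epsilon_t,\bar u_t)\big]I_{E_\epsilon}$ is handled by the same continuity-plus-moments argument together with $|E_\epsilon|=\epsilon$. Assembling all the estimates yields $J(u^\epsilon)-J(\bar u)=\hat J+o(\epsilon)$.
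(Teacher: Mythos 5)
Your proposal is correct and follows essentially the same route as the paper: rewrite the cost under $\bar{\mathbb{P}}$, Taylor-expand $\Phi$ and $f$ to second order in the state around $\Gamma^{-1}\bar Y$, substitute $Y^\epsilon-\bar Y=Y^1+Y^2+o(\epsilon)$ and $\tilde\rho^\epsilon-\bar{\tilde\rho}=\tilde\rho^1+\tilde\rho^2+o(\epsilon)$, identify $\hat J$ as the retained part, and kill the residual terms one by one with H\"older's inequality, the moment bounds on $\Gamma^{\pm1}$, $\tilde\rho$ and the variational processes, and the growth conditions in {\bf (H2)} (the paper does this explicitly as sixteen terms $\mathbb{D}_1,\dots,\mathbb{D}_{16}$, each $o(\epsilon^2)$ after squaring). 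Your dominated-convergence treatment of the Hessian remainders $\Phi_{XX}(\theta)-\Phi_{XX}(\Gamma_T^{-1}\bar Y_T)$ is in fact a cleaner justification than the paper's terse bound for the corresponding terms, since {\bf (H2)} only gives continuity and boundedness of the second derivatives, not Lipschitz continuity.
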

\begin{proof}
The detailed proof is given in the appendix.
\end{proof}

\subsubsection{Adjoint equations and maximum principle}
In this subsection, we introduce the first and second order adjoint equations and use them to 
obtain   our   main result of this work  on   the  maximum principle for   partially observed systems 
\eqref{new state}-\eqref{new cost}.

First, applying  the It\^{o}'s formula to $\bar{\tilde{\rho}}_t(Y_t^1+Y_t^2)$ and then to $\tilde{\rho}^1_tY_t^1$  
 yields,  respectively
{\footnotesize\begin{equation*}
\begin{aligned}
&d\bar{\tilde{\rho}}_t(Y_t^1+Y_t^2)=\bar{\tilde{\rho}}_t\bigg[\Gamma_t\bar{b}_X(t)\Gamma_t^{-1}(Y^1_t+Y^2_t)+\frac{1}{2}\Gamma_t\bar{b}_{XX}(t)\Gamma_t^{-1}Y_t^1\Gamma_t^{-1}Y_t^1+\Gamma_t\delta b(t)I_{E_{\epsilon}}\bigg]dt\\
&\quad+\bar{\tilde{\rho}}_t\sum_{r_1=1}^{k_1}\Big\{\Gamma_t\bar{\sigma}_X^{r_1}(t)\Gamma_t^{-1}(Y_t^1+Y_t^2)+\Gamma_t\delta\sigma^{r_1}(t)I_{E_{\epsilon}}+\frac{1}{2}\Gamma_t\bar{\sigma}_{XX}^{r_1}(t)\Gamma_t^{-1}Y_t^1\Gamma_t^{-1}Y_t^1\\
&\quad+\Gamma_t\delta\sigma_X^{r_1}(t)I_{E_{\epsilon}}\Gamma_t^{-1}Y_t^1\Big\}dW_t^{r_1}+\bar{\tilde{\rho}}_t\sum_{r_2=1}^{k_2}(D^{-1}(t)\bar{h}(t))^{r_2}(Y_t^1+Y_t^2)d\zeta_t^{r_2},
\end{aligned}
\end{equation*}}
and 
{\footnotesize\begin{equation*}
\begin{aligned}
&d\tilde{\rho}^1_tY_t^1=\tilde{\rho}_t^1\Gamma_t\bar{b}_X(t)\Gamma_t^{-1}Y_t^1dt+\tilde{\rho}_t^1\sum_{r_1=1}^{k_1}[\Gamma_t\bar{\sigma}_X^{r_1}(t)\Gamma_t^{-1}Y_t^1+\Gamma_t\delta\sigma^{r_1}(t)I_{E_{\epsilon}}]dW_t^{r_1}\\
&+\sum_{r_2=1}^{k_2}[\tilde{\rho}_t^1Y_t^1(D^{-1}(t)\bar{h}(t))^{r_2}+\bar{\tilde{\rho}}_tY_t^1(D^{-1}(t)\bar{h}_X(t)\Gamma_t^{-1}Y_t^1)^{r_2}+\bar{\tilde{\rho}}_tY_t^1(D^{-1}(t)\delta h(t)I_{E_{\epsilon}})^{r_2}]d\zeta_t^{r_2}\,. 
\end{aligned}
\end{equation*}}
By \eqref{rho12}, we have
{\footnotesize\begin{equation*}
\begin{aligned}
&d(\tilde{\rho}_t^1+\tilde{\rho}_t^2)=\Big\{(\tilde{\rho}_t^1+\tilde{\rho}_t^2)(D^{-1}(t)\bar{h}(t))^{\top}+\bar{\tilde{\rho}}_t[D^{-1}(t)\bar{h}_X(t)\Gamma_t^{-1}(Y_t^1+Y_t^2)]^{\top}\\
&\quad+\bar{\tilde{\rho}}_t[D^{-1}(t)\delta h(t)I_{E_{\epsilon}}]^{\top}+\tilde{\rho}_t^1[D^{-1}(t)\bar{h}_X(t)\Gamma_t^{-1}Y^1_t]^{\top}+\tilde{\rho}_t^1[D^{-1}(t)\delta h(t)I_{E_{\epsilon}}]^{\top}\\
&\quad+\frac{1}{2}\bar{\tilde{\rho}}_t[D^{-1}(t)\bar{h}_{XX}(t)\Gamma_t^{-1}Y_t^1\Gamma_t^{-1}Y_t^1]^{\top}+\bar{\tilde{\rho}}_t[D^{-1}(t)\delta h_X(t)I_{E_{\epsilon}}\Gamma_t^{-1}Y_t^1]^{\top}\Big\}d\zeta_t\,. 
\end{aligned}
\end{equation*}}
Setting  $\tilde{\YY}_t=Y_t^1(Y_t^1)^{\top}$,  we  see 
{\footnotesize\begin{equation*}
\begin{aligned}
 d\tilde{\YY}_t =&\bigg\{\tilde{\YY}_t(\Gamma_t^{-1})^{\top}\bar{b}_X^{\top}(t)\Gamma_t^{\top}+\Gamma_t\bar{b}_X(t)\Gamma_t^{-1}\tilde{\YY}_t+\sum_{r_1=1}^{k_1}\Big[\Gamma_t\bar{\sigma}_X^{r_1}(t)\Gamma_t^{-1}\tilde{\YY}_t(\Gamma_t^{-1})^{\top}(\bar{\sigma}_X^{r_1}(t))^{\top}\Gamma_t^{\top}\\
&+\Gamma_t\bar{\sigma}_X^{r_1}(t)\Gamma_t^{-1}Y_t^1(\delta\sigma^{r_1}(t))^{\top}\Gamma_t^{\top}I_{E_{\epsilon}}+\Gamma_t\delta\sigma^{r_1}(t)(Y_t^1)^{\top}(\Gamma^{-1}_t)^{\top}(\bar{\sigma}_X^{r_1}(t))^{\top}\Gamma_t^{\top}I_{E_{\epsilon}}\\  
&+\Gamma_t\delta\sigma^{r_1}(t)(\delta\sigma^{r_1}(t))^{\top}\Gamma_t^{\top}I_{E_{\epsilon}}\Big]\bigg\}dt+\sum_{r_1=1}^{k_1}\Big\{\tilde{\YY}_t(\Gamma_t^{-1})^{\top}(\bar{\sigma}_X^{r_1}(t))^{\top}\Gamma_t^{\top}\\
&+Y_t^1(\delta\sigma^{r_1}(t))^{\top}\Gamma_t^{\top}I_{E_{\epsilon}}+\Gamma_t\bar{\sigma}_X^{r_1}(t)\Gamma_t^{-1}\tilde{\YY}_t+\Gamma_t\delta\sigma^{r_1}(t)(Y_t^1)^{\top}I_{E_{\epsilon}}\Big\}dW_t^{r_1}\,. 
\end{aligned}
\end{equation*}}
Now we introduce the first order adjoint equations: 
\begin{equation}\label{alpha}
\left\{
\begin{aligned}
-d\alpha_t&=\bigg[\bar{f}(t)+\sum_{r_2=1}^{k_2}(D^{-1}(t)\bar{h}(t))^{r_2}\beta_t^{r_2}\bigg]dt-\sum_{r_2=1}^{k_2}\beta_t^{r_2}d\zeta_t^{r_2}-dN_t\,, \\
\alpha_T&=\Phi(\Gamma_T^{-1}\bar{Y}_T)\,,
\end{aligned}
\right.
\end{equation}
\begin{equation}\label{p}
\left\{
\begin{aligned}
-dp_t&=\bigg\{(\Gamma_t^{-1})^{\top}\bar{f}_X(t)+(\Gamma_t^{-1})^{\top}\bar{b}_X^{\top}(t)\Gamma_t^{\top}p_t+\sum_{r_2=1}^{k_2}(D^{-1}(t)\bar{h}(t))^{r_2}\tilde{q}_t^{r_2}\\
&+\sum_{r_1=1}^{k_1}(\Gamma_t^{-1})^{\top}\bar{\sigma}_X^{r_1}(t)\Gamma_t^{\top}q^{r_1}_t+\sum_{r_2=1}^{k_2}[(\Gamma_t^{-1})^{\top}\bar{h}_X^{\top}(t)(D^{-1}(t))^{\top}]^{r_2}\beta_t^{r_2}\bigg\}dt\\
&-\sum_{r_1=1}^{k_1}q_t^{r_1}dW_t^{r_1}-\sum_{r_2=1}^{k_2}\tilde{q}^{r_2}_td\zeta_t^{r_2}-dM_t\\
p_T&=(\Gamma_T^{-1})^{\top}\Phi_X(\Gamma_T^{-1}\bar{Y}_T),
\end{aligned}
\right.
\end{equation}
and  the second order adjoint equation: 
\begin{equation}\label{P}
\left\{
\begin{aligned}
-dP_t&=\bigg\{\bar{\tilde{\rho}}_t(\Gamma_t^{-1})^{\top}\bar{f}_{XX}(t)\Gamma_t^{-1}+(\Gamma_t^{-1})^{\top}\bar{b}_X^{\top}(t)\Gamma_t^{\top}P_t+P_t\Gamma_t\bar{b}_X(t)\Gamma_t^{-1}\\
&+\sum_{r_1=1}^{k_1}(\Gamma_t^{-1})^{\top}(\bar{\sigma}_X^{r_1}(t))^{\top}\Gamma_t^{\top}P_t\Gamma_t\bar{\sigma}_X^{r_1}(t)\Gamma_t^{-1}+\sum_{r_1=1}^{k_1}[(\Gamma_t^{-1})^{\top}(\bar{\sigma}_X^{r_1}(t))^{\top}\Gamma_t^{\top}Q_t^{r_1}\\
&+Q_t^{r_1}\Gamma_t\bar{\sigma}_X^{r_1}(t)\Gamma_t^{-1}]+\sum_{r_2=1}^{k_2}\bar{\tilde{\rho}}_t(\Gamma_t^{-1})^{\top}(D^{-1}(t)\bar{h}_{XX}(t))^{r_2}\Gamma_t^{-1}\beta_t^{r_2}\\
&+\sum_{r_1=1}^{k_1}\langle\bar{\tilde{\rho}}_t(\Gamma_t^{-1})^{\top}\Gamma_t\bar{\sigma}_{XX}^{r_1}(t)\Gamma_t^{-1},q^{r_1}_t\rangle+\sum_{r_2=1}^{k_2}\Big[\bar{\tilde{\rho}}_t\tilde{q}^{r_2}_t([(\Gamma_t^{-1})^{\top}\bar{h}_X^{\top}(t)(D^{-1}(t))^{\top}]^{r_2})^{\top}\\
&+\bar{\tilde{\rho}}_t[(\Gamma_t^{-1})^{\top}\bar{h}_X^{\top}(t)(D^{-1}(t))^{\top}]^{r_2}(\tilde{q}_t^{r_2})^{\top}\Big]+\langle\bar{\tilde{\rho}}_t(\Gamma_t^{-1})^{\top}\Gamma_t\bar{b}_{XX}(t)\Gamma_t^{-1},p_t\rangle\bigg\}dt\\
&-\sum_{r_1=1}^{k_1}Q_t^{r_1}dW_t^{r_1}-dR_t\\
P_T&=\bar{\tilde{\rho}}_T(\Gamma_T^{-1})^{\top}\Phi_{XX}(\Gamma_T^{-1}\bar{Y}_T)\Gamma_T^{-1},
\end{aligned}
\right.
\end{equation}
where $N,M,R$ are  square-integrable $\FF$-martingales  starting $0$  at time $0$, which are orthogonal to the Brownian motions $W,\zeta$. 
Let us point out that similar to  \cite{BJ14}, in our  backward stochastic differential equations
\eqref{alpha}, \eqref{p}, \eqref{P}, we need the
three additional martingales $(N_t, M_t, R_t)$.  

Applying classical It\^{o}'s formula to the It\^o processes $\langle\bar{\tilde{\rho}}_t(Y_t^1+Y_t^2),p_t\rangle$, $\langle \tilde{\rho}_t^1Y_t^1,p_t\rangle$, $(\tilde{\rho}_t^1+\tilde{\rho}_t^2)\alpha_t$ and $\langle P_t,\tilde{\YY}_t\rangle$ on 
the interval $[0,T]$,  and  then substituting  them into the \eqref{hatJ},  and noticing 
\begin{equation}
\begin{aligned}
&\bar{\mathbb{E}}\int_0^T\bigg\{\sum_{r_1=1}^{k_1}\langle\bar{\tilde{\rho}}_t\Gamma_t\delta\sigma_X^{r_1}(t)\Gamma_t^{-1}Y_t^1I_{E_{\epsilon}},q_t^{r_1}\rangle+\frac{1}{2}\text{tr}\bigg[P_t\bigg[\sum_{r_1=1}^{k_1}[\Gamma_t\bar{\sigma}_X^{r_1}(t)\Gamma_t^{-1}Y_t^1(\delta\sigma^{r_1}(t))^{\top}\Gamma_t^{\top}I_{E_{\epsilon}}\\
&+\Gamma_t\delta\sigma^{r_1}(t)(Y_t^1)^{\top}(\Gamma_t^{-1})^{\top}(\bar{\sigma}_X^{r_1}(t))^{\top}\Gamma_t^{\top}I_{E_{\epsilon}}]\bigg]+\sum_{r_1=1}^{k_1}Q_t^{r_1}[Y_t^1(\delta\sigma^{r_1}(t))^{\top}\Gamma_t^{\top}I_{E_{\epsilon}}\\
&+\Gamma_t\delta\sigma^{r_1}(t)(Y_t^1)^{\top}I_{E_{\epsilon}}]\bigg]+\sum_{r_2=1}^{k_2}[\tilde{\rho}_t^1(D^{-1}(t)\delta h(t)I_{E_{\epsilon}})^{r_2}+\bar{\tilde{\rho}}_t(D^{-1}(t)\delta h_X(t)\Gamma_t^{-1}Y_t^1I_{E_{\epsilon}})^{r_2}]\beta_t^{r_2}\\
&+\sum_{r_1=1}^{k_1}\langle\tilde{\rho}_t^1\Gamma_t\delta\sigma^{r_1}(t)I_{E_{\epsilon}},q_t^{r_1}\rangle+\sum_{r_2=1}^{k_2}\langle\bar{\tilde{\rho}}_tY_t^1(D^{-1}(t)\delta h(t)I_{E_{\epsilon}})^{r_2},\tilde{q}_t^{r_2}\rangle\bigg\}dt=o(\epsilon)\,,  
\end{aligned}\label{e.6.21} 
\end{equation}
we  obtain 
\begin{equation}
\begin{aligned}
\hat{J}&=\bar{\mathbb{E}}\int_0^T\bigg\{\bar{\tilde{\rho}}_t\delta f(t)+\langle\bar{\tilde{\rho}}_t\Gamma_t\delta b(t),p_t\rangle+\sum_{r_1=1}^{k_1}\langle\bar{\tilde{\rho}}_t[\Gamma_t\delta\sigma^{r_1}(t)+\Gamma_t\delta\sigma_X^{r_1}(t)\Gamma_t^{-1}Y_t^1],q_t^{r_1}\rangle\\
&+\frac{1}{2}\text{tr}\bigg[P_t\bigg[\sum_{r_1=1}^{k_1}[\Gamma_t\bar{\sigma}_X^{r_1}(t)\Gamma_t^{-1}Y_t^1(\delta\sigma^{r_1}(t))^{\top}\Gamma_t^{\top}+\Gamma_t\delta\sigma^{r_1}(t)(Y_t^1)^{\top}(\Gamma_t^{-1})^{\top}(\bar{\sigma}_X^{r_1}(t))^{\top}\Gamma_t^{\top}\\
&+\Gamma_t\delta\sigma^{r_1}(t)(\delta\sigma^{r_1}(t))^{\top}\Gamma_t^{\top}]\bigg]+\sum_{r_1=1}^{k_1}Q_t^{r_1}[Y_t^1(\delta\sigma^{r_1}(t))^{\top}\Gamma_t^{\top}+\Gamma_t\delta\sigma^{r_1}(t)(Y_t^1)^{\top}]\bigg]\\
&+\sum_{r_2=1}^{k_2}[\bar{\tilde{\rho}}_t(D^{-1}(t)\delta h(t))^{r_2}+\tilde{\rho}_t^1(D^{-1}(t)\delta h(t))^{r_2}+\bar{\tilde{\rho}}_t(D^{-1}(t)\delta h_X(t)\Gamma_t^{-1}Y_t^1)^{r_2}]\beta_t^{r_2}\\
&+\sum_{r_1=1}^{k_1}\langle\tilde{\rho}_t^1\Gamma_t\delta\sigma^{r_1}(t),q_t^{r_1}\rangle+\sum_{r_2=1}^{k_2}\langle\bar{\tilde{\rho}}_tY_t^1(D^{-1}(t)\delta h(t))^{r_2},\tilde{q}_t^{r_2}\rangle\bigg\}I_{E_{\epsilon}}dt\,. 
\end{aligned}\label{e.6.22} 
\end{equation}
Denote  
\begin{equation}
\begin{aligned}
H(t,Y,u;p,q,\beta)&=\text{tr}[q_t^{\top}\Gamma_t\sigma(t,\Gamma^{-1}_tY_t,u_t)]+\langle\beta_t,D^{-1}(t)h(t,\Gamma_t^{-1}Y_t,u_t)\rangle\\
&\quad+\langle p_t,\Gamma_tb(t,\Gamma_t^{-1}Y_t,u_t)\rangle+f(t,\Gamma_t^{-1}Y_t,u_t),
\end{aligned}
\end{equation}
and
\begin{equation}
\begin{aligned}
\delta H(t)=H(t,\bar{Y},u;p,q,\beta)-H(t,\bar{Y},\bar{u};p,q,\beta).
\end{aligned}
\end{equation}
Then we  can write \eqref{e.6.22} as 
\begin{equation}
\begin{aligned}
\hat{J}&=\bar{\mathbb{E}}\int_0^T\bigg\{\bar{\tilde{\rho}}_t\delta H(t)+\frac{1}{2}\text{tr}\{(\delta\sigma(t))^{\top}\Gamma_t^{\top}P_t\Gamma_t\delta\sigma(t)\}\bigg\}I_{E_{\epsilon}}dt+o(\epsilon)\,. 
\end{aligned}
\end{equation} 
Now we can summarize the above argument as   the main theorem in our paper. 
\begin{theorem}
Let {\bf (H1)-(H2)}  hold and let  $\bar{u}$ be  the optimal control.   Assume that $\bar{Y}$ and $\bar{\tilde{\rho}}$ are the 
solutions to \eqref{new state} and \eqref{tilderho} corresponding to  $\bar{u}$. 
Suppose that $(\alpha,\beta)\in\mathcal{S}^{p}([0,T];\mathbb{R})\times (\mathcal{M}^{2,p}([0,T];\mathbb{R}))^{k_2}$, $(p,q,\tilde{q})\in\mathcal{S}^{p}([0,T];\mathbb{R}^n)\times (\mathcal{M}^{2,p}([0,T];\mathbb{R}^n))^{k_1}\times(\mathcal{M}^{2,p}([0,T];\mathbb{R}^n))^{k_2}$ and $(P,Q)\in\mathcal{S}^{p}([0,T];\mathbb{R}^{n\times n})\times (\mathcal{M}^{2,p}([0,T];\mathbb{R}^{n\times n}))^{k_1}$ satisfy \eqref{alpha}, \eqref{p} and \eqref{P}, respectively. Then for any $u\in U$, we have the following condition for   the optimal  control.  
\begin{equation}
\begin{aligned}
&\bar{\mathbb{E}}\bigg[\bar{\tilde{\rho}}_t\delta H(t)+\frac{1}{2}\text{tr}\{(\delta\sigma(t))^{\top}\Gamma_t^{\top}P_t\Gamma_t\delta\sigma(t)\}\bigg|\mathcal{F}_t^{\zeta}\bigg]\geq0,\ a.e.\ t,\ \bar{\mathbb{P}}\text{-}a.s. 
\end{aligned}
\end{equation} 
\end{theorem}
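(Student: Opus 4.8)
The plan is to derive the stated inequality from the optimality of $\bar u$ combined with the second--order expansion of the cost functional already assembled. Since $\bar u$ minimizes \eqref{new cost} over $\UU_{ad}$, for every $\tau$ and every sufficiently small $\epsilon>0$ the spike--perturbed control $u^{\epsilon}$ satisfies $J(u^{\epsilon})-J(\bar u)\ge 0$. By Lemma \ref{Jestimate} this difference equals $\hat J+o(\epsilon)$, and after substituting the It\^o--formula identity \eqref{e.6.21} into \eqref{hatJ} one obtains \eqref{e.6.22}, which in terms of the Hamiltonian $H$ and of $\delta H,\delta\sigma$ reads
\begin{equation*}
\hat J=\bar{\mathbb E}\int_0^T\Big\{\bar{\tilde{\rho}}_t\,\delta H(t)+\frac{1}{2}\text{tr}\big((\delta\sigma(t))^{\top}\Gamma_t^{\top}P_t\Gamma_t\delta\sigma(t)\big)\Big\}I_{E_{\epsilon}}\,dt+o(\epsilon).
\end{equation*}
Hence, writing $g(t):=\bar{\mathbb E}\big[\bar{\tilde{\rho}}_t\,\delta H(t)+\frac{1}{2}\text{tr}((\delta\sigma(t))^{\top}\Gamma_t^{\top}P_t\Gamma_t\delta\sigma(t))\big]$ and recalling $E_{\epsilon}=[\tau,\tau+\epsilon]$, optimality gives $\int_{\tau}^{\tau+\epsilon}g(t)\,dt\ge o(\epsilon)$. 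The function $g$ is integrable on $[0,T]$: indeed $\bar{\tilde{\rho}},p,P$ lie in $\mathcal S^p$, $q,\tilde q,\beta,Q$ in $\mathcal M^{2,p}$, $\Gamma,\Gamma^{-1}$ have all moments by Theorem \ref{solution estimate2}, and the coefficients together with $\bar Y$ are controlled by {\bf (H1)--(H2)}, so Cauchy--Schwarz and Fubini give $g\in L^1([0,T])$.

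Next I would run a Lebesgue differentiation argument: dividing $\int_{\tau}^{\tau+\epsilon}g(t)\,dt\ge o(\epsilon)$ by $\epsilon$ and letting $\epsilon\downarrow 0$, at every Lebesgue point $\tau$ of $g$ one gets $g(\tau)\ge 0$; since a.e.\ $\tau\in[0,T]$ is a Lebesgue point, $g(\tau)\ge 0$ for a.e.\ $\tau$, the ``expectation form'' of the maximum principle for the fixed perturbation value.

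To upgrade to the conditional inequality I would localize the spike variation. Fixing $\tau$, a control value $v\in U$ and a set $A\in\mathcal F_{\tau}^{\zeta}$, take the perturbed control equal to $v$ on $A$ and to $\bar u_t$ on $A^{c}$ during $E_{\epsilon}$ (and to $\bar u$ outside $E_{\epsilon}$); this is bounded and $\mathcal F_t^{\zeta}$--adapted because $A\in\mathcal F_{\tau}^{\zeta}\subseteq\mathcal F_t^{\zeta}$ for $t\in E_{\epsilon}$. For this choice $\delta H(t)=I_A\,\delta_v H(t)$ and $\delta\sigma(t)=I_A\,\delta_v\sigma(t)$, where $\delta_v$ denotes the increment obtained by replacing $\bar u$ with $v$; plugging into $g(\tau)\ge 0$ gives
\begin{equation*}
\bar{\mathbb E}\Big[I_A\Big(\bar{\tilde{\rho}}_{\tau}\,\delta_v H(\tau)+\frac{1}{2}\text{tr}\big((\delta_v\sigma(\tau))^{\top}\Gamma_{\tau}^{\top}P_{\tau}\Gamma_{\tau}\delta_v\sigma(\tau)\big)\Big)\Big]\ge 0\qquad\text{for all }A\in\mathcal F_{\tau}^{\zeta},
\end{equation*}
which is precisely the assertion that the $\mathcal F_{\tau}^{\zeta}$--conditional expectation of the bracket is $\ge 0$, $\bar{\mathbb P}$--a.s. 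Letting $v$ range over a countable dense subset of $U$ yields one $\bar{\mathbb P}$--null set and one Lebesgue--null set of times, and the continuity in the control variable from {\bf (H2)} extends the inequality to all $u\in U$ by dominated convergence for conditional expectations; relabelling $\tau$ as $t$ gives the claim. In the range $H\in(1/3,1/2)$ the same argument applies with $\omega_2$ frozen, $\bar{\mathbb E}$ replaced by $\bar{\mathbb E}^{\omega_1}$, and Theorem \ref{solution estimate1} used in place of Theorem \ref{solution estimate2}.

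I expect the only genuinely delicate ingredient to be the passage from \eqref{hatJ} to \eqref{e.6.22}, i.e.\ the It\^o--formula bookkeeping in \eqref{e.6.21} that absorbs the first-- and second--order variational terms $Y^1,Y^2,\tilde{\rho}^1,\tilde{\rho}^2$ into the adjoint triples $(\alpha,\beta)$, $(p,q,\tilde q)$ and $(P,Q)$ — but this, like Lemmas \ref{Ytilderhoestimate}, \ref{estimate2} and \ref{Jestimate}, is established beforehand; granted those, the remaining work is exactly the standard Lebesgue--point and conditioning arguments sketched above.
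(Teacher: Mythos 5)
Your proposal is correct and follows essentially the same route as the paper: the paper's own ``proof'' consists precisely of the derivation of $\hat J$ via Lemmas \ref{Ytilderhoestimate}--\ref{Jestimate}, the It\^o-formula bookkeeping \eqref{e.6.21} leading to \eqref{e.6.22}, and the Hamiltonian rewriting, after which the conditional inequality is simply asserted. You in fact supply more detail than the paper does at the final step (the Lebesgue-point and $\mathcal F_\tau^{\zeta}$-localization arguments); the only point worth adding there is that, since the time-null set depends on the localizing event $A$, one should invoke the fact that $\mathcal F_\tau^{\zeta}$ is countably generated to handle all $A$ simultaneously, a standard refinement that does not affect the validity of the approach.
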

%

\subsection{The maximum principle in the case of $H\in(1/3,1/2)$}
 In this subsection, we follow the same idea 
in the  previous section to obtain the maximum principle for the case  $H\in (1/3, 1/2)$.
We need some results analogous to Lemmas \ref{Ytilderhoestimate}-\ref{Jestimate}. However, we can no longer use Theorem  \ref{solution estimate2}. 
Instead we have to use Theorem  \ref{solution estimate1}. 
We shall state these corresponding lemmas, whose proofs are similar and are omitted. 

\begin{lemma}\label{Ytilderhoestimate2}
Under the assumptions  {\bf (H1)-(H2)}, for any $p\geq2$   and  for $\bar{\mathbb{P}}\text{-}a.s.\ \omega_2\in\Omega_2$, we have the following estimates:
\begin{equation}
\begin{aligned}
&\bar{\mathbb{E}}\bigg[\sup_{0\leq s\leq T}|Y^1_s|^p\bigg]\leq C\epsilon^{\frac{p}{2}},\ \bar{\mathbb{E}}\bigg[\sup_{0\leq s\leq T}|Y^2_s|^p\bigg]\leq C\epsilon^p,\\
& \bar{\mathbb{E}}\bigg[\sup_{0\leq s\leq T}|\tilde{\rho}^1_s|^p\bigg]\leq C\epsilon^{\frac{p}{2}},\ \bar{\mathbb{E}}\bigg[\sup_{0\leq s\leq T}|\tilde{\rho}^2_s|^p\bigg]\leq C\epsilon^p,
\end{aligned}
\end{equation}
where expectation $\bar{\mathbb{E}}=\bar{\mathbb{E}}^{\omega_1}$, which means that it only takes expectation for $\omega_1$ of Brownian motions, for any fixed $\omega_2$.
\end{lemma}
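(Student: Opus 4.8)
The plan is to argue pathwise in $\omega_2$. Fix $\omega_2\in\Omega_2$ in the set of full $\bar{\mathbb{P}}$-probability on which the conclusions of Theorem \ref{solution estimate1} hold, and set
\[
K=K(\omega_2):=1+\sup_{t\in[0,T]}\big(|\Gamma_t|+|\Gamma_t^{-1}|+|\Lambda_t|+|\Lambda_t^{-1}|\big)<\infty.
\]
With $\omega_2$ frozen, the variational equations defining $Y^1,Y^2$ together with \eqref{rho12} become \emph{linear} It\^o SDEs driven only by the Brownian motions $W$ and $\zeta$ (recall that $\zeta$ is a Brownian motion under $\bar{\mathbb{P}}$). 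Their coefficients $\Gamma_s\bar b_X(s)\Gamma_s^{-1}$, $\Gamma_s\bar\sigma_X^{r_1}(s)\Gamma_s^{-1}$, $D^{-1}(s)\bar h(s)$, $D^{-1}(s)\bar h_X(s)\Gamma_s^{-1}$ and their second-order analogues, as well as the inhomogeneous terms $\Gamma_s\delta b(s)$, $\Gamma_s\delta\sigma^{r_1}(s)$, $D^{-1}(s)\delta h(s)$, are all bounded on $[0,T]$ by a constant depending only on $K$, $T$ and the uniform bounds on $b_X,b_{XX},\sigma_X,\sigma_{XX},h,h_X,h_{XX}$ and $|D|+|D^{-1}|$ from \textbf{(H1)}--\textbf{(H2)}. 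I would also record at this stage that $\bar{\mathbb{E}}\big[\sup_{t\in[0,T]}|\bar{\tilde\rho}_t|^p\big]<\infty$ for every $p$; this in fact holds \emph{uniformly} in $\omega_2$, since the coefficient $D^{-1}(s)\bar h(s)$ of \eqref{tilderho} is bounded by a deterministic constant.

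Granting this reduction, the four estimates are obtained by the standard moment bounds for linear SDEs, carried out exactly as in the proof of Lemma \ref{Ytilderhoestimate} (given in the appendix for $H>1/2$); the only change is that the deterministic constants now depend on $\omega_2$ through $K(\omega_2)$, which is harmless because the whole optimization in this regime is performed with $\omega_2$ fixed. First I would treat $Y^1$ (and identically $\tilde\rho^1$): It\^o's formula for $|\cdot|^p$, the Burkholder--Davis--Gundy inequality and the boundedness of the coefficients give
\[
\bar{\mathbb{E}}\Big[\sup_{r\le t}|Y^1_r|^p\Big]\le C\int_0^t\bar{\mathbb{E}}\Big[\sup_{r\le s}|Y^1_r|^p\Big]\,ds
+C\,\bar{\mathbb{E}}\Big[\Big(\int_{E_\epsilon}\big|\Gamma_s\delta\sigma^{r_1}(s)\big|^2\,ds\Big)^{p/2}\Big],
\]
and since $\int_{E_\epsilon}ds=\epsilon$ the last term is $\le C\epsilon^{p/2}$; Gr\"onwall's lemma then yields $\bar{\mathbb{E}}[\sup_s|Y^1_s|^p]\le C\epsilon^{p/2}$. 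For $Y^2$ (and identically $\tilde\rho^2$) one repeats the computation; the genuinely new inhomogeneous contributions to the $p$-th moment are of three kinds: a $ds$-integral supported on $E_\epsilon$, which is $\le C\epsilon^p$; a term quadratic in $Y^1$ of the form $\int_0^T|Y^1_s|^2\,ds$, whose $p$-th moment is $\le C\int_0^T\bar{\mathbb{E}}|Y^1_s|^{2p}\,ds\le C\epsilon^p$ by the $Y^1$-estimate applied with exponent $2p$; and a term $\int_{E_\epsilon}|Y^1_s|^2\,ds$ entering through a stochastic integral, whose $(p/2)$-th moment is $\le\epsilon^{p/2-1}\int_{E_\epsilon}\bar{\mathbb{E}}|Y^1_s|^p\,ds\le C\epsilon^p$ by H\"older's inequality and the $Y^1$-estimate. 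All three are $O(\epsilon^p)$, so Gr\"onwall closes $\bar{\mathbb{E}}[\sup_s|Y^2_s|^p]\le C\epsilon^p$.

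The one point requiring care — and the reason this lemma is stated separately rather than quoted from Lemma \ref{Ytilderhoestimate} — is that for $H\in(1/3,1/2)$ there is \emph{no} analogue of the uniform integrability $\bar{\mathbb{E}}[\sup_t|\Gamma_t|^p]<\infty$ of Theorem \ref{solution estimate2}: only the $\omega_2$-pathwise bound of Theorem \ref{solution estimate1} is available. Hence the estimates must be run separately for $\bar{\mathbb{P}}$-almost every $\omega_2$, with $\bar{\mathbb{E}}=\bar{\mathbb{E}}^{\omega_1}$ denoting expectation over the Brownian paths alone and $C=C(\omega_2)$. Once this bookkeeping is kept straight — and noting that $\zeta$, being a $\bar{\mathbb{P}}$-Brownian motion independent of $\omega_2$, produces genuinely deterministic BDG constants — the argument is line-for-line the one used for $H>1/2$, and I anticipate no further obstacle.
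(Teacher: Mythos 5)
Your proposal is correct and is essentially the argument the paper intends: the paper omits the proof of Lemma \ref{Ytilderhoestimate2}, stating only that it is the same as that of Lemma \ref{Ytilderhoestimate} with the pathwise bounds of Theorem \ref{solution estimate1} replacing the moment bounds of Theorem \ref{solution estimate2}, which is precisely your reduction to a fixed $\omega_2$ with $C=C(\omega_2)$ and $\bar{\mathbb{E}}=\bar{\mathbb{E}}^{\omega_1}$. Your bookkeeping of the inhomogeneous terms (the $E_\epsilon$-supported terms giving $\epsilon^{p/2}$ and $\epsilon^p$, and the quadratic-in-$Y^1$ terms handled via the first estimate at exponent $2p$) matches the appendix computations.
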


\begin{lemma}\label{estimate22}
Under the assumptions  {\bf (H1)-(H2)}, for $p\geq2$ and $\bar{\mathbb{P}}\text{-}a.s.\ \omega_2\in\Omega_2$, we have
\begin{equation}
\begin{aligned}
\bar{\mathbb{E}}\bigg[\sup_{0\leq s\leq T}|Y^{\epsilon}_s-\bar{Y}_s-Y_s^1-Y_s^2|^p\bigg]=o(\epsilon^p),\\
\bar{\mathbb{E}}\bigg[\sup_{0\leq s\leq T}|\tilde{\rho}^{\epsilon}_s-\bar{\tilde{\rho}}_s-\tilde{\rho}_s^1-\tilde{\rho}_s^2|^p\bigg]=o(\epsilon^p)\,. 
\end{aligned}
\end{equation}
\end{lemma}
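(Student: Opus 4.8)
The plan is to repeat, with $\omega_2$ frozen, the spike‑variation Taylor‑expansion argument used for Lemma \ref{estimate2} in the $H>1/2$ case. First I would fix $\omega_2$ in the full $\bar{\mathbb{P}}$‑measure subset of $\Omega_2$ on which Theorem \ref{solution estimate1} holds, so that $\sup_{t\in[0,T]}|\Gamma_t(\omega_2)|$ and $\sup_{t\in[0,T]}|\Gamma_t^{-1}(\omega_2)|$ (and likewise $\sup_t|\Lambda_t(\omega_2)|$, $\sup_t|\Lambda_t^{-1}(\omega_2)|$) are finite constants depending on $\omega_2$. With $\omega_2$ so fixed, the transformed state equation in \eqref{new state}, the equation \eqref{tilderho} for $\tilde\rho^u$, and the first/second order variational equations for $(Y^1,Y^2)$ and \eqref{rho12} are \emph{classical} It\^o SDEs driven by $W,\zeta$ alone; under {\bf (H1)}--{\bf (H2)} their coefficients are globally Lipschitz in the state, with Lipschitz and linear‑growth constants bounded in terms of the finite suprema above and of the bounds in {\bf (H2)}. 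Hence every moment estimate used in the $H>1/2$ proof survives verbatim with $\bar{\mathbb{E}}$ replaced by $\bar{\mathbb{E}}^{\omega_1}$; this is exactly the content of Lemma \ref{Ytilderhoestimate2}, whose proof is the $\omega_2$‑frozen transcription of that of Lemma \ref{Ytilderhoestimate}.

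Next, writing $\mathcal{Y}^\epsilon_s:=Y^\epsilon_s-\bar{Y}_s-Y^1_s-Y^2_s$ and $\mathcal{R}^\epsilon_s:=\tilde\rho^\epsilon_s-\bar{\tilde\rho}_s-\tilde\rho^1_s-\tilde\rho^2_s$, I would subtract the equations satisfied by the three pieces from those of $Y^\epsilon$ and $\tilde\rho^\epsilon$ and Taylor‑expand $b,\sigma$ (resp.\ $h$) to second order in $x$ about $\Gamma_s^{-1}\bar{Y}_s$. Since $b_{XX},\sigma_{XX},h_{XX}$ are bounded and continuous in $(x,u)$, the resulting equation for $\mathcal{Y}^\epsilon$ has a drift and a diffusion that are linear in $\mathcal{Y}^\epsilon_s$ with bounded coefficients, plus a remainder built from (i) the second‑order Taylor error evaluated along an intermediate point, acting on the quadratic expressions in $\Gamma_s^{-1}(Y^1_s+Y^2_s)$ and on $\Gamma_s^{-1}\mathcal{Y}^\epsilon_s$, and (ii) $I_{E_\epsilon}$‑terms of the form $\delta\sigma_X,\delta b_X$ applied to $Y^1,Y^2$ together with higher‑order $I_{E_\epsilon}$ contributions. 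Using Lemma \ref{Ytilderhoestimate2}, that is $\bar{\mathbb{E}}^{\omega_1}\sup_s|Y^1_s|^p\le C\epsilon^{p/2}$ and $\bar{\mathbb{E}}^{\omega_1}\sup_s|Y^2_s|^p\le C\epsilon^p$, together with the fact that the oscillation of $\phi_{XX}$ along the intermediate points is $o(1)$ as $\epsilon\to0$ (dominated convergence), this entire remainder is $o(\epsilon)$ in $L^p$ under $\bar{\mathbb{E}}^{\omega_1}$. A Burkholder--Davis--Gundy estimate and Gronwall's inequality applied to $t\mapsto\bar{\mathbb{E}}^{\omega_1}\sup_{0\le s\le t}|\mathcal{Y}^\epsilon_s|^p$ then give the first bound, and the second, for $\mathcal{R}^\epsilon$, follows identically from \eqref{tilderho} and \eqref{rho12}, using in addition $\bar{\mathbb{E}}^{\omega_1}[\sup_t|\tilde\rho^u_t|^p]<\infty$ and the just‑proven estimate for $\mathcal{Y}^\epsilon$.

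The computation is entirely routine — it is precisely the one already relegated to the appendix for Lemma \ref{estimate2}. The only genuine difference, and the reason the two ranges of $H$ are treated separately, is that here one may \emph{not} integrate in $\omega_2$: Theorem \ref{solution estimate1} supplies only $\bar{\mathbb{P}}$‑a.s.\ pathwise finiteness of $\sup_t|\Gamma_t(\omega_2)|$ and $\sup_t|\Gamma_t^{-1}(\omega_2)|$, not the uniform moment bounds available through Theorem \ref{solution estimate2} in the $H>1/2$ case. Freezing $\omega_2$ first turns those suprema into honest finite constants, so all the constants $C$ above are finite for that $\omega_2$, and the assertion ``$=o(\epsilon^p)$'' is to be read for the fixed $\omega_2$ — which is exactly what the statement claims. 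Accordingly I would present Lemmas \ref{Ytilderhoestimate2} and \ref{estimate22} as the $\omega_2$‑frozen versions of Lemmas \ref{Ytilderhoestimate} and \ref{estimate2} and omit the repeated estimates, as the statement already does; the one point worth double‑checking in the write‑up is that no constant secretly acquires $\omega_2$‑dependence that destroys the a.s.\ claim, which it does not because each such constant is a polynomial in the finitely many frozen suprema and in the deterministic bounds of {\bf (H1)}--{\bf (H2)}.
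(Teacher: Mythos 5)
Your proposal is correct and follows exactly the route the paper intends: the paper omits this proof as ``similar'' to that of Lemma \ref{estimate2}, and your reconstruction --- freeze $\omega_2$ on the full-measure set where Theorem \ref{solution estimate1} gives finite pathwise suprema of $\Gamma,\Gamma^{-1},\Lambda,\Lambda^{-1}$, then rerun the appendix estimates with $\bar{\mathbb{E}}$ read as $\bar{\mathbb{E}}^{\omega_1}$ and with those suprema treated as $(\omega_2$-dependent$)$ constants --- is precisely the intended argument. Your closing remark that each constant is a polynomial in the frozen suprema and the deterministic bounds of {\bf (H1)}--{\bf (H2)}, so the $o(\epsilon^p)$ holds for each fixed $\omega_2$, correctly addresses the only point where the two ranges of $H$ genuinely differ.
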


\begin{lemma}\label{Jestimate2}
Let the assumptions  {\bf (H1)-(H2)} hold. Then for $\bar{\mathbb{P}}\text{-}a.s.\ \omega_2\in\Omega_2$, we have
\begin{equation}
J(u^{\epsilon})-J(\bar{u})=\hat{J}+o(\epsilon).
\end{equation}
\end{lemma}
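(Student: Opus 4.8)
The plan is to reduce the statement to the already-established case $H\in(1/2,1)$ (Lemma \ref{Jestimate}) by freezing the fractional-noise sample point $\omega_2$. First I would fix $\omega_2$ in the $\bar{\mathbb P}$-full set on which Theorem \ref{solution estimate1} holds and introduce the finite pathwise constant
\[
C(\omega_2):=\sup_{t\in[0,T]}\bigl(|\Gamma_t(\omega_2)|+|\Lambda_t(\omega_2)|+|\Gamma_t^{-1}(\omega_2)|+|\Lambda_t^{-1}(\omega_2)|\bigr)<\infty .
\]
With $\omega_2$ frozen, all expectations become $\bar{\mathbb E}=\bar{\mathbb E}^{\omega_1}$ over the Brownian part only, and the transformed coefficients $\tilde b(t,y,u)=\Gamma_t b(t,\Gamma_t^{-1}y,u)$, $\tilde\sigma^{r_1}(t,y,u)=\Gamma_t\sigma^{r_1}(t,\Gamma_t^{-1}y,u)$ and the coefficients appearing in \eqref{rho12}, together with their first and second $y$-derivatives, are bounded and globally Lipschitz in $y$ with constants depending only on $C(\omega_2)$ and the bounds in {\bf (H2)}. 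This is the single structural change relative to the case $H>1/2$: the finite pathwise bound $C(\omega_2)$ takes over the role played by the $L^p$-integrability of $\Gamma,\Lambda$ supplied by Theorem \ref{solution estimate2}.

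Second, with these bounds in hand, Lemmas \ref{Ytilderhoestimate2} and \ref{estimate22} are available (they are the $\omega_2$-frozen analogues of Lemmas \ref{Ytilderhoestimate}--\ref{estimate2}, proved by the same Gronwall and Burkholder--Davis--Gundy arguments, with $\bar{\mathbb E}$ read as $\bar{\mathbb E}^{\omega_1}$). I would then expand
\[
J(u^\epsilon)-J(\bar u)=\bar{\mathbb E}\bigl[\tilde\rho^\epsilon_T\Phi(\Gamma_T^{-1}Y^\epsilon_T)-\bar{\tilde\rho}_T\Phi(\Gamma_T^{-1}\bar Y_T)\bigr]+\bar{\mathbb E}\!\int_0^T\!\bigl[\tilde\rho^\epsilon_t f(t,\Gamma_t^{-1}Y^\epsilon_t,u^\epsilon_t)-\bar{\tilde\rho}_t\bar f(t)\bigr]dt
\]
by Taylor's formula in the state variable around $\Gamma_t^{-1}\bar Y_t$, using the decompositions $Y^\epsilon=\bar Y+Y^1+Y^2+r^Y_\epsilon$ and $\tilde\rho^\epsilon=\bar{\tilde\rho}+\tilde\rho^1+\tilde\rho^2+r^\rho_\epsilon$ with $\|\sup_s|r^Y_s|\|_p,\|\sup_s|r^\rho_s|\|_p=o(\epsilon)$ from Lemma \ref{estimate22}. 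Collecting the order-$\epsilon$ contributions reproduces $\hat J$ exactly as in \eqref{hatJ}; the terms carrying the indicator $I_{E_\epsilon}$ are $O(\epsilon)$ because $|E_\epsilon|=\epsilon$ and the integrands lie in $L^p$ by Lemma \ref{Ytilderhoestimate2}; all remaining terms are controlled by H\"older's inequality together with the moment bounds $\|\sup_s|Y^1_s|\|_p,\|\sup_s|\tilde\rho^1_s|\|_p\lesssim\epsilon^{1/2}$, $\|\sup_s|Y^2_s|\|_p,\|\sup_s|\tilde\rho^2_s|\|_p\lesssim\epsilon$, and the boundedness of $\Phi_{XX},f_{XX},b_{XX},\sigma_{XX},h_{XX}$ from {\bf (H2)}, so each is $o(\epsilon)$. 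Since $\omega_2$ was an arbitrary point of the full set, this yields $J(u^\epsilon)-J(\bar u)=\hat J+o(\epsilon)$ for $\bar{\mathbb P}$-a.s. $\omega_2$.

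The main obstacle is purely one of bookkeeping, identical to that in the appendix proof of Lemma \ref{Jestimate}: one must pair each cross term with the correct power of $\epsilon$ so that the error is genuinely $o(\epsilon)$ rather than merely $O(\epsilon)$, the point being that every occurrence of $Y^1$ or $\tilde\rho^1$ costs $\epsilon^{1/2}$ and these always appear squared, or multiplied by $I_{E_\epsilon}$ (which costs a further $\epsilon$), while $Y^2,\tilde\rho^2$ and the remainders already cost a full $\epsilon$. The only genuinely new point is that the implied constants now depend on $\omega_2$ through $C(\omega_2)$; since $C(\omega_2)<\infty$ for $\bar{\mathbb P}$-a.s. $\omega_2$, the resulting $o(\epsilon)$ is a valid pathwise-in-$\omega_2$ statement, which is precisely the form asserted in the lemma.
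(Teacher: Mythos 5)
Your proposal is correct and follows exactly the route the paper intends: the paper omits this proof, stating only that one must replace Theorem \ref{solution estimate2} by Theorem \ref{solution estimate1} and rerun the appendix argument for Lemma \ref{Jestimate} with $\bar{\mathbb{E}}=\bar{\mathbb{E}}^{\omega_1}$ for fixed $\omega_2$. Your observation that the pathwise constant $C(\omega_2)$ simply takes over the role of the $L^p$-moment bounds on $\Gamma,\Gamma^{-1},\Lambda,\Lambda^{-1}$, with the resulting $o(\epsilon)$ depending on $\omega_2$, is precisely the intended (and correct) modification.
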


%

\begin{theorem}
Let {\bf (H1)-(H2)} hold and let $\bar{u}$ be the optimal control. Assume that $\bar{Y}$ and $\bar{\tilde{\rho}}$ are the solutions to \eqref{new state} and \eqref{tilderho} corresponding to $\bar{u}$. Suppose that $(\alpha,\beta)\in\mathcal{S}^{p}([0,T];\mathbb{R})\times (\mathcal{M}^{2,p}([0,T];\mathbb{R}))^{k_2}$, $(p,q,\tilde{q})\in\mathcal{S}^{p}([0,T];\mathbb{R}^n)\times (\mathcal{M}^{2,p}([0,T];\mathbb{R}^n))^{k_1}\times(\mathcal{M}^{2,p}([0,T];\mathbb{R}^n))^{k_2}$ and $(P,Q)\in\mathcal{S}^{p}([0,T];\mathbb{R}^{n\times n})\times (\mathcal{M}^{2,p}([0,T];\mathbb{R}^{n\times n}))^{k_1}$ satisfy \eqref{alpha}, \eqref{p} and \eqref{P}, respectively. Then for any $u\in U$, and for $\bar{\mathbb{P}}\text{-}a.s.\ \omega_2$, we have
\begin{equation}
\begin{aligned}
&\bar{\mathbb{E}}\bigg[\bar{\tilde{\rho}}_t\delta H(t)+\frac{1}{2}\text{tr}\{(\delta\sigma(t))^{\top}\Gamma_t^{\top}P_t\Gamma_t\delta\sigma(t)\}\bigg|\mathcal{F}_t^{\zeta}\bigg]\geq0,\ a.e.\ t,\ \bar{\mathbb{P}}\text{-}a.s.\ \omega_1.
\end{aligned}
\end{equation}
\end{theorem}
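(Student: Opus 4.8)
The plan is to follow almost verbatim the route of the previous subsection (the case $H\in(1/2,1)$), replacing only the two places where that argument invoked the moment estimates of Theorem \ref{solution estimate2}. For $H\in(1/3,1/2)$ we have instead only the pathwise bounds \eqref{e.5.12} of Theorem \ref{solution estimate1}, so the first move is to fix a sample point $\omega_2\in\Omega_2$ outside the $\bar{\mathbb{P}}$-null set on which \eqref{e.5.12} fails and to carry out the entire analysis with the expectation $\bar{\mathbb{E}}=\bar{\mathbb{E}}^{\omega_1}$ taken in the Brownian variable alone, the fBm paths $B(\omega_2),\tilde B(\omega_2)$ being frozen deterministic (rough) paths. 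For this fixed $\omega_2$, \eqref{e.5.12} together with {\bf (H1)--(H2)} makes the coefficients $\tilde b(t,y,u)=\Gamma_tb(t,\Gamma_t^{-1}y,u)$, $\tilde\sigma(t,y,u)=\Gamma_t\sigma(t,\Gamma_t^{-1}y,u)$, $\Lambda_th(t,\Gamma_t^{-1}y,u)$, $D^{-1}(t)h(t,\Gamma_t^{-1}y,u)$ and all their relevant first and second $X$-derivatives uniformly (in $t$ and $\omega_1$) bounded and Lipschitz in $y$, with a finite $\omega_2$-dependent constant; so the transformed state/observation/density system \eqref{new state}--\eqref{new observation}, \eqref{tilderho} is a genuinely classical controlled system driven by the Brownian motions $W$ and $\zeta$.

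Granting that reduction, I would run the three standard steps. First, the spike variation: introduce $u^{\epsilon}$ exactly as before, solve the first and second order variational equations for $(Y^1,Y^2)$ and $(\tilde\rho^1,\tilde\rho^2)$ (those above \eqref{rho12} and \eqref{rho12} itself), and invoke Lemmas \ref{Ytilderhoestimate2} and \ref{estimate22} --- the $H\in(1/3,1/2)$ analogues, whose proofs are identical modulo replacing $\bar{\mathbb{E}}$ by $\bar{\mathbb{E}}^{\omega_1}$ --- to conclude, via Lemma \ref{Jestimate2}, that $J(u^{\epsilon})-J(\bar u)=\hat J+o(\epsilon)$ with $\hat J$ given by \eqref{hatJ}. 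Second, the adjoint equations: the linear backward equations \eqref{alpha}, \eqref{p}, \eqref{P} are, for frozen $\omega_2$, BSDEs driven by $W,\zeta$ together with the orthogonal-martingale parts $N,M,R$ that are forced on us because $\mathcal{F}^{\zeta}\subsetneq\mathcal{F}$, exactly as in \cite{BJ14}; their coefficients lie in the appropriate $\mathcal{M}^{2,p}$-spaces by the estimates above, so the usual well-posedness of BSDEs with random Lipschitz coefficients yields unique solutions $(\alpha,\beta)$, $(p,q,\tilde q)$, $(P,Q)$ in the spaces asserted in the statement. Third, the duality: apply the classical It\^o formula in $\omega_1$ to $\langle\bar{\tilde\rho}_t(Y^1_t+Y^2_t),p_t\rangle$, $\langle\tilde\rho^1_tY^1_t,p_t\rangle$, $(\tilde\rho^1_t+\tilde\rho^2_t)\alpha_t$ and $\langle P_t,Y^1_t(Y^1_t)^{\top}\rangle$, substitute into \eqref{hatJ}, discard the $o(\epsilon)$ term \eqref{e.6.21}, and read off, just as in \eqref{e.6.22}, the representation $\hat J=\bar{\mathbb{E}}\int_0^T\{\bar{\tilde\rho}_t\,\delta H(t)+\frac12\mathrm{tr}[(\delta\sigma(t))^{\top}\Gamma_t^{\top}P_t\Gamma_t\delta\sigma(t)]\}I_{E_{\epsilon}}\,dt+o(\epsilon)$.

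To finish, I would use the optimality $J(u^{\epsilon})\ge J(\bar u)$, hence $\hat J\ge o(\epsilon)$, divide by $\epsilon$, let $\epsilon\downarrow0$ along Lebesgue points $\tau=t$, and apply the customary partially-observed localization --- taking the perturbing value $u$ equal to $\bar u$ off a set $A\times[\tau,\tau+\epsilon]$ with $A\in\mathcal{F}^{\zeta}_{\tau}$ arbitrary --- to turn the integrated inequality into the pointwise conditional one $\bar{\mathbb{E}}[\bar{\tilde\rho}_t\,\delta H(t)+\frac12\mathrm{tr}\{(\delta\sigma(t))^{\top}\Gamma_t^{\top}P_t\Gamma_t\delta\sigma(t)\}\,|\,\mathcal{F}_t^{\zeta}]\ge0$ for a.e.\ $t$ and $\bar{\mathbb{P}}$-a.s.\ $\omega_1$; since the excluded $\omega_2$-set is $\bar{\mathbb{P}}$-null, this is the claim. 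The only real obstacle here is bookkeeping rather than mathematics: one must check that every $L^p(\omega_1)$ bound in the three steps, and the well-posedness and duality for the adjoint BSDEs, goes through with constants that are finite but possibly $\omega_2$-dependent, i.e. that the loss of uniform-in-$\omega_2$ integrability --- the price of having only \eqref{e.5.12} in place of Theorem \ref{solution estimate2} --- never collides with the $\epsilon\to0$ limit; because $\omega_2$ is held fixed throughout Steps 1--3, it does not.
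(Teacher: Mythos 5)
Your proposal is correct and follows essentially the same route as the paper: the paper's own treatment of the $H\in(1/3,1/2)$ case consists precisely of fixing $\omega_2$ outside the null set where the pathwise bounds \eqref{e.5.12} fail, replacing the moment estimates of Theorem \ref{solution estimate2} by the pathwise estimates of Theorem \ref{solution estimate1}, taking $\bar{\mathbb{E}}=\bar{\mathbb{E}}^{\omega_1}$ throughout, and then rerunning the Section 7.1 argument verbatim via Lemmas \ref{Ytilderhoestimate2}--\ref{Jestimate2}, the adjoint equations \eqref{alpha}, \eqref{p}, \eqref{P}, and the It\^o-formula duality. Your additional remarks on the $\omega_2$-dependence of the constants and the final $\mathcal{F}_\tau^{\zeta}$-localization are consistent with (and slightly more explicit than) what the paper records.
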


\section{Appendix} 
In the appendix, we give  proofs of the results in Section 7.

\emph{Proof of Lemma \ref{Ytilderhoestimate}}.
From  the classical $L^p$-estimate of SDE, H\"older's inequality and assumption {\bf (H2)}, it follows 
\begin{equation*}
\begin{aligned}
&\bar{\mathbb{E}}\bigg[\sup_{0\leq s\leq T}|Y_s^1|^p\bigg]\leq C\bar{\mathbb{E}}\bigg[\sum_{r_1=1}^{k_1}\bigg(\int_0^T\Big|\Gamma_s\delta\sigma^{r_1}(s)I_{E_{\epsilon}}\Big|^2ds\bigg)^{\frac{p}{2}}\bigg]\\
&\leq C\bar{\mathbb{E}}\bigg[\sum_{r_1=1}^{k_1}\sup_{0\leq s\leq T}|\Gamma_s|^p\bigg(\int_0^T\Big|\delta\sigma^{r_1}(s)\Big|^2I_{E_{\epsilon}}ds\bigg)^{\frac{p}{2}}\bigg]\\
&\leq C\bigg(\bar{\mathbb{E}}\sup_{0\leq s\leq T}|\Gamma_s|^{2p}\bigg)^{\frac{1}{2}}\bigg(\bar{\mathbb{E}}\bigg[\sum_{r_1=1}^{k_1}\bigg(\int_0^T\Big|\delta\sigma^{r_1}(s)\Big|^2I_{E_{\epsilon}}ds\bigg)^{\frac{p}{2}}\bigg]^2\bigg)^{\frac{1}{2}}\\
&\leq C\bigg(\bar{\mathbb{E}}\sum_{r_1=1}^{k_1}\bigg(\int_0^T\Big(1+|\Gamma_s^{-1}\bar{Y}_s|^2+|u_s|^2+|\bar{u}_s|^2\Big)I_{E_{\epsilon}}ds\bigg)^p\bigg)^{\frac{1}{2}}\\
&\leq C\bigg(\bar{\mathbb{E}}\bigg[\epsilon^p+\bigg(\sup_{0\leq s\leq T}|\Gamma_s^{-1}\bar{Y}_s|^2\int_0^TI_{E_{\epsilon}}ds\bigg)^p+\bigg(\int_0^T(|u_s|^2+|\bar{u}_s|^2)I_{E_{\epsilon}}ds\bigg)^p\bigg]\bigg)^{\frac{1}{2}}\\
&\leq C\bigg(\epsilon^p+\epsilon^{p-1}\bar{\mathbb{E}}\bigg(\int_0^T(|u_s|^{2p}+|\bar{u}_s|^{2p})I_{E_{\epsilon}}ds\bigg)\bigg)^{\frac{1}{2}}\leq O(\epsilon^{\frac{p}{2}})\\
\end{aligned}
\end{equation*}
Similarly, we have by H\"older's inequality and assumption {\bf (H2)}
\begin{equation*}
\begin{aligned}
&\bar{\mathbb{E}}\bigg[\sup_{0\leq s\leq T}|Y_s^2|^p\bigg]\leq C\bar{\mathbb{E}}\bigg[\bigg(\int_0^T\bigg|\frac{1}{2}\Gamma_s\bar{b}_{XX}(s)\Gamma_s^{-1}Y_s^1\Gamma_s^{-1}Y_s^1+\Gamma_s\delta b(s)I_{E_{\epsilon}}\bigg|ds\bigg)^p\\
&+\sum_{r_1=1}^{k_1}\bigg(\int_0^T\bigg|\frac{1}{2}\Gamma_s\bar{\sigma}^{r_1}_{XX}(s)\Gamma_s^{-1}Y_s^1\Gamma_s^{-1}Y_s^1+\Gamma_s\delta\sigma_X^{r_1}(s)I_{E_{\epsilon}}\Gamma_s^{-1}Y_s^1\bigg|^2ds\bigg)^{\frac{p}{2}}\bigg]\\
&\leq C\bar{\mathbb{E}}\bigg[\bigg(\frac{1}{2}\int_0^T\bigg|\Gamma_s\bar{b}_{XX}(s)\Gamma_s^{-1}Y_s^1\Gamma_s^{-1}Y_s^1\bigg|ds\bigg)^p+\bigg(\int_0^T\bigg|\Gamma_s\delta b(s)I_{E_{\epsilon}}\bigg|ds\bigg)^p\\
&+\sum_{r_1=1}^{k_1}\bigg(\int_0^T\bigg|\frac{1}{2}\Gamma_s\bar{\sigma}^{r_1}_{XX}(s)\Gamma_s^{-1}Y_s^1\Gamma_s^{-1}Y_s^1\bigg|^2ds\bigg)^{\frac{p}{2}}+\sum_{r_1=1}^{k_1}\bigg(\int_0^T\bigg|\Gamma_s\delta\sigma_X^{r_1}(s)I_{E_{\epsilon}}\Gamma_s^{-1}Y_s^1\bigg|^2ds\bigg)^{\frac{p}{2}}\bigg]\\
&\leq C\bar{\mathbb{E}}\bigg[\sup_{0\leq s\leq T}|\Gamma_s|^p\sup_{0\leq s\leq T}|\Gamma_s^{-1}Y_s^1|^{2p}\bigg]+C\bar{\mathbb{E}}\bigg[\bigg(\int_0^T|\Gamma_s\delta b(s)|I_{E_{\epsilon}}ds\bigg)^p\bigg]\\
&+C\sum_{r_1=1}^{k_1}\bigg[\bar{\mathbb{E}}\bigg(\int_0^T\bigg|\Gamma_s\bar{\sigma}^{r_1}_{XX}(s)\Gamma_s^{-1}Y_s^1\Gamma_s^{-1}Y_s^1\bigg|^2ds\bigg)^{\frac{p}{2}}+\bar{\mathbb{E}}\bigg(\int_0^T\bigg|\Gamma_s\delta\sigma_X^{r_1}(s)\Gamma_s^{-1}Y_s^1I_{E_{\epsilon}}\bigg|^2ds\bigg)^{\frac{p}{2}}\bigg]\\
&\leq C\bigg(\bar{\mathbb{E}}\sup_{0\leq s\leq T}|\Gamma_s^{-1}|^{4p}|Y_s^1|^{4p}\bigg)^{\frac{1}{2}}+C\bar{\mathbb{E}}\bigg[\bigg(\int_0^T|\Gamma_s|(1+|\Gamma_s^{-1}\bar{Y}_s|+|u_s|+|\bar{u}_s|)I_{E_{\epsilon}}ds\bigg)^p\bigg]\\
&+C\sum_{r_1=1}^{k_1}\bigg[\bar{\mathbb{E}}\bigg(\sup_{0\leq s\leq T}|\Gamma_s\bar{\sigma}^{r_1}_{XX}(s)|^p\sup_{0\leq s\leq T}|\Gamma_s^{-1}Y_s^1|^{2p}\bigg)+\bar{\mathbb{E}}\bigg(\int_0^T|\Gamma_s|^2|\Gamma_s^{-1}Y_s^1|^2I_{E_{\epsilon}}ds\bigg)^{\frac{p}{2}}\bigg]\\
&\leq C\epsilon^p+C\bar{\mathbb{E}}\bigg[\sup_{0\leq s\leq T}|\Gamma_s|^p\bigg(\int_0^T(1+|\Gamma_s^{-1}\bar{Y}_s|+|u_s|+|\bar{u}_s|)I_{E_{\epsilon}}ds\bigg)^p\bigg]\\
&+C\bigg(\bar{\mathbb{E}}\sup_{0\leq s\leq T}|\Gamma_s|^{2p}\bigg)^{\frac{1}{2}}\bigg(\bar{\mathbb{E}}\sup_{0\leq s\leq T}|\Gamma_s^{-1}Y_s^1|^{4p}\bigg)^{\frac{1}{2}}+C\epsilon^{\frac{p}{2}}\bar{\mathbb{E}}\bigg[\sup_{0\leq s\leq T}|\Gamma_s|^p\sup_{0\leq s\leq T}|\Gamma_s^{-1}Y_s^1|^p\bigg]\\
\end{aligned}
\end{equation*}
\begin{equation*}
\begin{aligned}
&\leq C\epsilon^p+C\bigg(\bar{\mathbb{E}}\sup_{0\leq s\leq T}|\Gamma_s|^{2p}\bigg)^{\frac{1}{2}}\bigg(\bar{\mathbb{E}}\bigg(\int_0^T(1+|\Gamma_s^{-1}\bar{Y}_s|+|u_s|+|\bar{u}_s|)I_{E_{\epsilon}}ds\bigg)^{2p}\bigg)^{\frac{1}{2}}\\
&\leq C\epsilon^p+C\bigg(\bar{\mathbb{E}}\bigg(\int_0^T(1+|\Gamma_s^{-1}\bar{Y}_s|^{2p}+|u_s|^{2p}+|\bar{u}_s|^{2p})I_{E_{\epsilon}}ds\bigg)\epsilon^{2p-1}\bigg)^{\frac{1}{2}}\\
&\leq C\epsilon^p+C\bigg(\epsilon^{2p-1}\bigg(\epsilon+\epsilon\bar{\mathbb{E}}\sup_{0\leq s\leq T}|\Gamma_s^{-1}\bar{Y}_s|^{2p}+\epsilon\sup_{0\leq s\leq T}\bar{\mathbb{E}}(|u_s|^{2p}+|\bar{u}_s|^{2p})\bigg)\bigg)^{\frac{1}{2}}\leq O(\epsilon^p).
\end{aligned}
\end{equation*}
Now we give   the last two estimates in \eqref{estimate1}. First, from the classical $L^p$-estimate of solution to SDE, we have
\begin{equation*}
\begin{aligned}
\bar{\mathbb{E}}\bigg[\sup_{0\leq s\leq T}|\tilde{\rho}_s^1|^p\bigg]&\leq C\bar{\mathbb{E}}\bigg[\bigg(\int_0^T|\bar{\tilde{\rho}}_s(D^{-1}(s)\bar{h}_X(s)\Gamma_s^{-1}Y_s^1)^{\top}+\bar{\tilde{\rho}}_s(D^{-1}(s)\delta h(s)I_{E_{\epsilon}})^{\top}|^2ds\bigg)^{\frac{p}{2}}\bigg]\\
&\leq C\bar{\mathbb{E}}\bigg[\bigg(\int_0^T|\bar{\tilde{\rho}}_s(D^{-1}(s)\bar{h}_X(s)\Gamma_s^{-1}Y_s^1)^{\top}|^2ds\bigg)^{\frac{p}{2}}\bigg]\\
&\quad+C\bar{\mathbb{E}}\bigg[\bigg(\int_0^T|\bar{\tilde{\rho}}_s(D^{-1}(s)\delta h(s)I_{E_{\epsilon}})^{\top}|^2ds\bigg)^{\frac{p}{2}}\bigg]=\mathbb{I}+\mathbb{II},
\end{aligned}
\end{equation*}
where, by H\"older's inequality and assumption {\bf (H2)}

\begin{equation*}
\begin{aligned}
\mathbb{I}&\leq \bar{\mathbb{E}}\bigg[\bigg(\sup_{0\leq s\leq T}|\bar{\tilde{\rho}}_t|^2\int_0^T|D^{-1}(s)|^2|\bar{h}_X(s)|^2|\Gamma_s^{-1}Y_s^1|^2ds\bigg)^{\frac{p}{2}}\bigg]\\ 
&\leq \bar{\mathbb{E}}\bigg[\sup_{0\leq s\leq T}|\bar{\tilde{\rho}}_t|^p\sup_{0\leq s\leq T}|\Gamma_s^{-1}|^p|Y_s^1|^p\bigg(\int_0^T|D^{-1}(s)|^2|\bar{h}_X(s)|^2ds\bigg)^{\frac{p}{2}}\bigg]\\
&\leq C\bigg(\bar{\mathbb{E}}\sup_{0\leq s\leq T}|\bar{\tilde{\rho}}_t|^{2p}\bigg)^{\frac{1}{2}}\bigg(\bar{\mathbb{E}}\sup_{0\leq s\leq T}|\Gamma_s^{-1}|^{2p}|Y_s^1|^{2p}\bigg)^{\frac{1}{2}}\leq C\epsilon^{\frac{p}{2}},
\end{aligned}
\end{equation*}
and 
\begin{equation*}
\begin{aligned}
\mathbb{II}&\leq \bar{\mathbb{E}}\bigg[\bigg(\int_0^T|\bar{\tilde{\rho}}_s|^4|D^{-1}(s)|^4I_{E_{\epsilon}}ds\bigg)^{\frac{p}{4}}\bigg(\int_0^T|\delta h(s)|^4I_{E_{\epsilon}}ds\bigg)^{\frac{p}{4}}\bigg]\\
&\leq C\bar{\mathbb{E}}\bigg[\sup_{0\leq s\leq T}|\bar{\tilde{\rho}}_s|^p\bigg(\int_0^T|D^{-1}(s)|^4I_{E_{\epsilon}}ds\bigg)^{\frac{p}{4}}\bigg(\int_0^TI_{E_{\epsilon}}ds\bigg)^{\frac{p}{4}}\bigg]\\
&\leq C\epsilon^{\frac{p}{2}}\bar{\mathbb{E}}\bigg[\sup_{0\leq s\leq T}|\bar{\tilde{\rho}}_s|^p\bigg]\leq C\epsilon^{\frac{p}{2}}\,. 
\end{aligned}
\end{equation*}
It is also  easy to see 
\begin{equation*}
\begin{aligned}
&\bar{\mathbb{E}}\bigg[\sup_{0\leq s\leq T}|\tilde{\rho}_s^2|^p\bigg]\leq C\bar{\mathbb{E}}\bigg[\bigg(\int_0^T|\tilde{\rho}_s^1(D^{-1}(s)\bar{h}_X(s)\Gamma^{-1}_sY^1_s)^{\top}+\tilde{\rho}_s^1(D^{-1}(s)\delta h(s)I_{E_{\epsilon}})^{\top}\\
&\quad+\bar{\tilde{\rho}}_s(D^{-1}(s)\bar{h}_X(s)\Gamma^{-1}_sY^2_s)^{\top}+\frac{1}{2}\bar{\tilde{\rho}}_s(D^{-1}(s)\bar{h}_{XX}(s)\Gamma_s^{-1}Y_s^1\Gamma_s^{-1}Y_s^1)^{\top}\\
&\quad+\bar{\tilde{\rho}}_s(D^{-1}(s)\delta h_X(s)\Gamma_s^{-1}Y^1_sI_{E_{\epsilon}})^{\top}|^2ds\bigg)^{\frac{p}{2}}\bigg]\leq \text{I}+\text{II}+\text{III}+\text{IV}+\text{V}\,. 
\end{aligned}
\end{equation*}
In the following we are going to bound $\text{I},\text{II},\text{III},\text{IV}$ and $\text{V}$ by H\"older's inequality and assumption {\bf (H2)}. 
We can bound $\text{I}$ as follows. 
\begin{equation*}
\begin{aligned}
\text{I}&=\bar{\mathbb{E}}\bigg[\bigg(\int_0^T|\tilde{\rho}_s^1(D^{-1}(s)\bar{h}_X(s)\Gamma_s^{-1}Y_s^1)^{\top}|^2ds\bigg)^{\frac{p}{2}}\bigg]\\
&\leq\bar{\mathbb{E}}\bigg[\sup_{0\leq s\leq T}|\tilde{\rho}_s^1|^p\sup_{0\leq s\leq T}|\Gamma_s^{-1}Y_s^1|^p\bigg(\int_0^T|D^{-1}(s)\bar{h}_X(s)|^2ds\bigg)^{\frac{p}{2}}\bigg]\\
&\leq C\bigg(\bar{\mathbb{E}}\sup_{0\leq s\leq T}|\tilde{\rho}_s^1|^{2p}\bigg)^{\frac{1}{2}}\bigg(\bar{\mathbb{E}}\sup_{0\leq s\leq T}|\Gamma_s^{-1}Y_s^1|^{2p}\bigg)^{\frac{1}{2}}\leq C\epsilon^p\,. 
\end{aligned}
\end{equation*}
The term $\text{II}$ can be bounded similarly. 
\begin{equation*} 
\begin{aligned}
\text{II}&=\bar{\mathbb{E}}\bigg[\bigg(\int_0^T|\tilde{\rho}_s^1(D^{-1}(s)\delta h(s)I_{E_{\epsilon}})^{\top}|^2ds\bigg)^{\frac{p}{2}}\bigg]\\
&\leq\bar{\mathbb{E}}\bigg[\bigg(\int_0^T|\tilde{\rho}_s^1|^4|D^{-1}(s)|^4I_{E_{\epsilon}}ds\bigg)^{\frac{p}{4}}\bigg(\int_0^T|\delta h(s)|^4I_{E_{\epsilon}}ds\bigg)^{\frac{p}{4}}\bigg]\\
&\leq C\epsilon^{\frac{p}{2}}\bar{\mathbb{E}}\bigg[\sup_{0\leq s\leq T}|\tilde{\rho}_s^1|^p\bigg]\leq C\epsilon^p\,. 
\end{aligned}
\end{equation*}
We can compute $\text{III}$ and $\text{IV}$ as follows. 
\begin{equation*}
\begin{aligned}
\text{III}&=\bar{\mathbb{E}}\bigg[\bigg(\int_0^T|\bar{\tilde{\rho}}_s(D^{-1}(s)\bar{h}_X(s)\Gamma_s^{-1}Y^2_s)^{\top}|^2ds\bigg)^{\frac{p}{2}}\bigg]\\
&\leq C\bigg(\bar{\mathbb{E}}\sup_{0\leq s\leq T}|\bar{\tilde{\rho}}_s|^{2p}\bigg)^{\frac{1}{2}}\bigg(\bar{\mathbb{E}}\sup_{0\leq s\leq T}|\Gamma_s^{-1}|^{4p}\bigg)^{\frac{1}{4}}\bigg(\bar{\mathbb{E}}\sup_{0\leq s\leq T}|Y^2_s|^{4p}\bigg)^{\frac{1}{4}}\leq C\epsilon^p,
\end{aligned}
\end{equation*}
and 
\begin{equation*}
\begin{aligned}
&\text{IV}=\bar{\mathbb{E}}\bigg[\bigg(\int_0^T|\bar{\tilde{\rho}}_s(D^{-1}(s)\bar{h}_{XX}(s)\Gamma_s^{-1}Y^1_s\Gamma_s^{-1}Y^1_s)^{\top}|^2ds\bigg)^{\frac{p}{2}}\bigg]\\
&\leq\bar{\mathbb{E}}\bigg[\sup_{0\leq s\leq T}|\bar{\tilde{\rho}}_s|^p\sup_{0\leq s\leq T}|\Gamma_s^{-1}Y^1_s|^p\sup_{0\leq s\leq T}|\Gamma_s^{-1}Y^1_s|^p\bigg(\int_0^T|D^{-1}(s)|^2|\bar{h}_{XX}(s)|^2ds\bigg)^{\frac{p}{2}}\bigg]\\
&\leq C\bigg(\bar{\mathbb{E}}\sup_{0\leq s\leq T}|\bar{\tilde{\rho}}_s|^{2p}\bigg)^{\frac{1}{2}}\bigg(\bar{\mathbb{E}}\sup_{0\leq s\leq T}|\Gamma_s^{-1}Y^1_s|^{4p}\bigg)^{\frac{1}{2}}\leq C\epsilon^p\,.
\end{aligned}
\end{equation*}
Finally, we have 
\begin{equation*}
\begin{aligned}
\text{V}&=\bar{\mathbb{E}}\bigg[\bigg(\int_0^T|\bar{\tilde{\rho}}_s(D^{-1}(s)\delta h_{X}(s)\Gamma_s^{-1}Y^1_sI_{E_{\epsilon}})^{\top}|^2ds\bigg)^{\frac{p}{2}}\bigg]\\
&\leq C\bar{\mathbb{E}}\bigg[\bigg(\int_0^T|\bar{\tilde{\rho}}_s|^4|\Gamma_s^{-1}Y^1_s|^4I_{E_{\epsilon}}ds\bigg)^{\frac{p}{4}}\bigg(\int_0^T|\delta h_{X}(s)|^4I_{E_{\epsilon}}ds\bigg)^{\frac{p}{4}}\bigg]\\
&\leq C\epsilon^{\frac{p}{2}}\bar{\mathbb{E}}\bigg[\sup_{0\leq s\leq T}|\bar{\tilde{\rho}}_s|^p\sup_{0\leq s\leq T}|\Gamma_s^{-1}Y^1_s|^p\bigg]\\
&\leq C\epsilon^{\frac{p}{2}}\bigg(\bar{\mathbb{E}}\sup_{0\leq s\leq T}|\bar{\tilde{\rho}}_s|^{2p}\bigg)^{\frac{1}{2}}\bigg(\bar{\mathbb{E}}\sup_{0\leq s\leq T}|\Gamma_s^{-1}Y^1_s|^{2p}\bigg)^{\frac{1}{2}}\leq C\epsilon^p.
\end{aligned}
\end{equation*}
Therefore, the proof of these four estimates are complete.

\emph{Proof of Lemma \ref{estimate2}}.
We only give  the second estimate and the first estimate can be obtained similarly.    First, we have
\begin{equation*}
\begin{aligned}
&\bar{\tilde{\rho}}_t+\tilde{\rho}^1_t+\tilde{\rho}_t^2=1+\int_0^t\bigg\{(\bar{\tilde{\rho}}_s+\tilde{\rho}_s^1+\tilde{\rho}_s^2)(D^{-1}(s)\bar{h}(s))^{\top}+\bar{\tilde{\rho}}_s(D^{-1}(s)\delta h_{X}(s)\Gamma_s^{-1}Y^1_sI_{E_{\epsilon}})^{\top}\\
&+\bar{\tilde{\rho}}_s[D^{-1}(s)\bar{h}_X(s)\Gamma_s^{-1}(Y^1_s+Y^2_s)]^{\top}+\bar{\tilde{\rho}}_s(D^{-1}(s)\delta h(s)I_{E_{\epsilon}})^{\top}+\tilde{\rho}^1_s(D^{-1}(s)\bar{h}_X(s)\Gamma_s^{-1}Y_s^1)^{\top}\\
&+\tilde{\rho}^1_s(D^{-1}(s)\delta h(s)I_{E_{\epsilon}})^{\top}+\frac{1}{2}\bar{\tilde{\rho}}_s(D^{-1}(s)\bar{h}_{XX}(s)\Gamma_s^{-1}Y^1_s\Gamma_s^{-1}Y^1_s)^{\top}\bigg\}d\zeta_s\,. 
\end{aligned}
\end{equation*}
Then
\begin{equation*}
\begin{aligned}
&\tilde{\rho}_t^{\epsilon}-\bar{\tilde{\rho}}_t-\tilde{\rho}^1_t-\tilde{\rho}_t^2=\int_0^t\bigg\{\tilde{\rho}_s^{\epsilon}(D^{-1}(s)h(s,\Gamma_s^{-1}Y_s^{\epsilon},u^{\epsilon}_s))^{\top}-(\bar{\tilde{\rho}}_s+\tilde{\rho}_s^1+\tilde{\rho}_s^2)(D^{-1}(s)\bar{h}(s))^{\top}\\
&-\bar{\tilde{\rho}}_s[D^{-1}(s)\bar{h}_X(s)\Gamma_s^{-1}(Y^1_s+Y^2_s)]^{\top}-\bar{\tilde{\rho}}_s(D^{-1}(s)\delta h(s)I_{E_{\epsilon}})^{\top}-\tilde{\rho}^1_s(D^{-1}(s)\bar{h}_X(s)\Gamma_s^{-1}Y_s^1)^{\top}\\
&-\tilde{\rho}^1_s(D^{-1}(s)\delta h(s)I_{E_{\epsilon}})^{\top}-\frac{1}{2}\bar{\tilde{\rho}}_s(D^{-1}(s)\bar{h}_{XX}(s)\Gamma_s^{-1}Y^1_s\Gamma_s^{-1}Y^1_s)^{\top}\\
&-\bar{\tilde{\rho}}_s(D^{-1}(s)\delta h_{X}(s)\Gamma_s^{-1}Y^1_sI_{E_{\epsilon}})^{\top}\bigg\}d\zeta_s.
\end{aligned}
\end{equation*}
It is easy to see 
\begin{equation*}
\begin{aligned}
&D^{-1}(s)h(s,\Gamma^{-1}_s(\bar{Y}_s+Y^1_s+Y^2_s),u^{\epsilon})-D^{-1}(s)\bar{h}(s)\\
=&D^{-1}(s)h(s,\Gamma^{-1}_s(\bar{Y}_s+Y^1_s+Y^2_s),u^{\epsilon})-D^{-1}(s)h(s,\Gamma^{-1}_s\bar{Y}_s,u^{\epsilon})\\
&+D^{-1}(s)h(s,\Gamma^{-1}_s\bar{Y}_s,u^{\epsilon})-D^{-1}(s)\bar{h}(s)\\
=&D^{-1}(s)\int_0^1h_X(s,\Gamma_s^{-1}\bar{Y}_s+\alpha\Gamma^{-1}_s(Y^1_s+Y^2_s),u^{\epsilon})d\alpha\Gamma_s^{-1}(Y_s^1+Y_s^2)+D^{-1}(s)\delta h(s)I_{E_{\epsilon}}\\
=&D^{-1}(s)\int_0^1[h_X(s,\Gamma_s^{-1}\bar{Y}_s+\alpha\Gamma^{-1}_s(Y^1_s+Y^2_s),u^{\epsilon})-h_X(s,\Gamma_s^{-1}\bar{Y}_s,u^{\epsilon}_s)]d\alpha\Gamma_s^{-1}(Y_s^1+Y_s^2)\\
&+D^{-1}(s)h_X(s,\Gamma_s^{-1}\bar{Y}_s,u^{\epsilon}_s)\Gamma_s^{-1}(Y_s^1+Y_s^2)+D^{-1}(s)\delta h(s)I_{E_{\epsilon}}\\
=&D^{-1}(s)\int_0^1\int_0^1\alpha h_{XX}(s,\Gamma_s^{-1}\bar{Y}_s+\beta\alpha\Gamma^{-1}_s(Y^1_s+Y^2_s),u^{\epsilon})d\beta\Gamma_s^{-1}(Y_s^1+Y_s^2)\\
&\times d\alpha\Gamma_s^{-1}(Y_s^1+Y_s^2)+D^{-1}(s)h_X(s,\Gamma_s^{-1}\bar{Y}_s,u^{\epsilon}_s)\Gamma_s^{-1}(Y_s^1+Y_s^2)+D^{-1}(s)\delta h(s)I_{E_{\epsilon}}\,,  
\end{aligned}
\end{equation*}
where $\int_0^1\int_0^1\alpha h_{XX}d\beta d\alpha\Gamma_s^{-1}(Y_s^1+Y_s^2)\Gamma_s^{-1}(Y_s^1+Y_s^2)$ has similar notation as \eqref{n.7.5}
and
\begin{equation*}
\begin{aligned}
&(\bar{\tilde{\rho}}_s+\tilde{\rho}_s^1+\tilde{\rho}_s^2)[D^{-1}(s)h(s,\Gamma^{-1}_s(\bar{Y}_s+Y^1_s+Y^2_s),u^{\epsilon}_s)]^{\top}-\bar{\tilde{\rho}}_s[D^{-1}(s)\bar{h}(s)]^{\top}\\
&=(\bar{\tilde{\rho}}_s+\tilde{\rho}_s^1+\tilde{\rho}_s^2)[D^{-1}(s)h(s,\Gamma^{-1}_s(\bar{Y}_s+Y^1_s+Y^2_s),u^{\epsilon}_s)-D^{-1}(s)h(s,\Gamma^{-1}_s\bar{Y}_s,u^{\epsilon}_s)]^{\top}\\
&\quad+(\bar{\tilde{\rho}}_s+\tilde{\rho}_s^1+\tilde{\rho}_s^2)[D^{-1}(s)h(s,\Gamma^{-1}_s\bar{Y}_s,u^{\epsilon}_s)]^{\top}-\bar{\tilde{\rho}}_s[D^{-1}(s)\bar{h}(s)]^{\top}\\
&=(\bar{\tilde{\rho}}_s+\tilde{\rho}_s^1+\tilde{\rho}_s^2)\bigg[D^{-1}(s)\int_0^1h_X(s,\Gamma^{-1}_s\bar{Y}_s+\alpha\Gamma^{-1}_s(Y^1_s+Y^2_s),u^{\epsilon}_s)d\alpha\Gamma_s^{-1}(Y_s^1+Y_s^2)\bigg]^{\top}\\
&\quad+(\bar{\tilde{\rho}}_s+\tilde{\rho}_s^1+\tilde{\rho}_s^2)[D^{-1}(s)(h(s,\Gamma^{-1}_s\bar{Y}_s,u^{\epsilon}_s)-\bar{h}(s))]^{\top}+(\tilde{\rho}_s^1+\tilde{\rho}_s^2)[D^{-1}(s)\bar{h}(s)]^{\top}\\
&=(\bar{\tilde{\rho}}_s+\tilde{\rho}_s^1+\tilde{\rho}_s^2)\bigg[D^{-1}(s)\int_0^1[h_X(s,\Gamma^{-1}_s\bar{Y}_s+\alpha\Gamma^{-1}_s(Y^1_s+Y^2_s),u^{\epsilon}_s)\\
&\quad-h_X(s,\Gamma_s^{-1}\bar{Y}_s,u_s^{\epsilon})]d\alpha\Gamma_s^{-1}(Y_s^1+Y_s^2)+D^{-1}(s)h_X(s,\Gamma_s^{-1}\bar{Y}_s,u_s^{\epsilon})\Gamma_s^{-1}(Y_s^1+Y_s^2)\bigg]^{\top}\\
&\quad+(\bar{\tilde{\rho}}_s+\tilde{\rho}_s^1+\tilde{\rho}_s^2)[D^{-1}(s)\delta h(s)I_{E_{\epsilon}}]^{\top}+(\tilde{\rho}_s^1+\tilde{\rho}_s^2)[D^{-1}(s)\bar{h}(s)]^{\top}\\
\end{aligned}
\end{equation*}
\begin{equation*}
\begin{aligned}
&=(\bar{\tilde{\rho}}_s+\tilde{\rho}_s^1+\tilde{\rho}_s^2)\bigg[D^{-1}(s)\int_0^1\int_0^1\alpha h_{XX}(s,\Gamma^{-1}_s\bar{Y}_s+\beta\alpha\Gamma^{-1}_s(Y^1_s+Y^2_s),u^{\epsilon}_s)d\beta d\alpha\\
&\times\Gamma_s^{-1}(Y_s^1+Y_s^2)\Gamma_s^{-1}(Y_s^1+Y_s^2)\bigg]^{\top}+(\bar{\tilde{\rho}}_s+\tilde{\rho}_s^1+\tilde{\rho}_s^2)\bigg[D^{-1}(s)h_X(s,\Gamma_s^{-1}\bar{Y}_s,u_s^{\epsilon})\\
&\times\Gamma_s^{-1}(Y_s^1+Y_s^2)\bigg]^{\top}+(\bar{\tilde{\rho}}_s+\tilde{\rho}_s^1+\tilde{\rho}_s^2)[D^{-1}(s)\delta h(s)I_{E_{\epsilon}}]^{\top}+(\tilde{\rho}_s^1+\tilde{\rho}_s^2)[D^{-1}(s)\bar{h}(s)]^{\top}\,. 
\end{aligned}
\end{equation*}
By the above analysis, we   obtain
\begin{equation*}
\begin{aligned}
&\tilde{\rho}_t^{\epsilon}-\bar{\tilde{\rho}}_t-\tilde{\rho}^1_t-\tilde{\rho}_t^2=\int_0^t\bigg\{(\tilde{\rho}^{\epsilon}_s-\bar{\tilde{\rho}}_s-\tilde{\rho}^1_s-\tilde{\rho}_s^2)(D^{-1}(s)h(s,\Gamma^{-1}_sY^{\epsilon},u^{\epsilon}_s))^{\top}\\
&+(\bar{\tilde{\rho}}_s+\tilde{\rho}^1_s+\tilde{\rho}_s^2)\bigg[D^{-1}(s)\int_0^1h_X(s,\Gamma_s^{-1}(\bar{Y}_s+Y_s^1+Y_s^2)+\theta\Gamma_s^{-1}(Y^{\epsilon}_s-\bar{Y}_s-Y_s^1-Y_s^2),u^{\epsilon}_s)d\theta\\
&\times\Gamma_s^{-1}(Y^{\epsilon}_s-\bar{Y}_s-Y_s^1-Y_s^2)\bigg]^{\top}+(\bar{\tilde{\rho}}_s+\tilde{\rho}_s^1+\tilde{\rho}_s^2)\bigg[D^{-1}(s)\int_0^1\int_0^1\alpha [h_{XX}(s,\Gamma^{-1}_s\bar{Y}_s\\
&+\beta\alpha\Gamma^{-1}_s(Y^1_s+Y^2_s),u^{\epsilon}_s)-\bar{h}_{XX}]d\beta d\alpha\Gamma_s^{-1}(Y_s^1+Y_s^2)\Gamma_s^{-1}(Y_s^1+Y_s^2)\bigg]^{\top}\\
&+(\bar{\tilde{\rho}}_s+\tilde{\rho}_s^1+\tilde{\rho}_s^2)[D^{-1}(s)\delta h_X(s)\Gamma_s^{-1}(Y_s^1+Y_s^2)I_{E_{\epsilon}}]^{\top}+\tilde{\rho}^2_s[D^{-1}(s)\bar{h}_X(s)\Gamma_s^{-1}(Y_s^1+Y_s^2)]^{\top}\\
&+\tilde{\rho}^2_s[D^{-1}(s)\delta h(s)I_{E_{\epsilon}}]^{\top}+\tilde{\rho}^1_s[D^{-1}(s)\bar{h}_X(s)\Gamma_s^{-1}Y_s^2]^{\top}-\bar{\tilde{\rho}}_s[D^{-1}(s)\delta h_X(s)\Gamma_s^{-1}Y_s^1I_{E_{\epsilon}}]^{\top}\\
&+\frac{1}{2}
(\tilde{\rho}_s^1+\tilde{\rho}_s^2)[D^{-1}(s)\bar{h}_{XX}(s)\Gamma_s^{-1}(Y_s^1+Y_s^2)\Gamma_s^{-1}(Y_s^1+Y_s^2)]^{\top}\\
&+\frac{1}{2}\bar{\tilde{\rho}}_s[D^{-1}(s)\bar{h}_{XX}(s)(\Gamma_s^{-1}Y_s^2\Gamma_s^{-1}Y_s^2+2\Gamma_s^{-1}Y_s^1\Gamma_s^{-1}Y_s^2)]^{\top}\bigg\}d\zeta_s.
\end{aligned}
\end{equation*}
Using classical $L^p$-estimate of SDE, we have
\begin{equation*}
\begin{aligned}
&\bar{\mathbb{E}}\bigg[\sup_{0\leq s\leq T}|\tilde{\rho}^{\epsilon}_s-\bar{\tilde{\rho}}_s-\tilde{\rho}_s^1-\tilde{\rho}_s^2|^p\bigg]\\
&\leq C\bar{\mathbb{E}}\bigg[\bigg(\int_0^T\bigg|(\bar{\tilde{\rho}}_s+\tilde{\rho}^1_s+\tilde{\rho}_s^2)\bigg[D^{-1}(s)\int_0^1h_X(s,\Gamma_s^{-1}(\bar{Y}_s+Y_s^1+Y_s^2)\\
&+\theta\Gamma_s^{-1}(Y^{\epsilon}_s-\bar{Y}_s-Y_s^1-Y_s^2),u^{\epsilon}_s)d\theta\Gamma_s^{-1}(Y^{\epsilon}_s-\bar{Y}_s-Y_s^1-Y_s^2)\bigg]^{\top}\\
&+(\bar{\tilde{\rho}}_s+\tilde{\rho}_s^1+\tilde{\rho}_s^2)\bigg[D^{-1}(s)\int_0^1\int_0^1\alpha [h_{XX}(s,\Gamma^{-1}_s\bar{Y}_s+\beta\alpha\Gamma^{-1}_s(Y^1_s+Y^2_s),u^{\epsilon}_s)\\
&-\bar{h}_{XX}]d\beta d\alpha\Gamma_s^{-1}(Y_s^1+Y_s^2)\Gamma_s^{-1}(Y_s^1+Y_s^2)\bigg]^{\top}+\tilde{\rho}^2_s[D^{-1}(s)\bar{h}_X(s)\Gamma_s^{-1}(Y_s^1+Y_s^2)]^{\top}\\
&+(\tilde{\rho}_s^1+\tilde{\rho}_s^2)[D^{-1}(s)\delta h_X(s)\Gamma_s^{-1}(Y_s^1+Y_s^2)I_{E_{\epsilon}}]^{\top}+\tilde{\rho}^2_s[D^{-1}(s)\delta h(s)I_{E_{\epsilon}}]^{\top}\\
&+\tilde{\rho}^1_s[D^{-1}(s)\bar{h}_X(s)\Gamma_s^{-1}Y_s^2]^{\top}+\bar{\tilde{\rho}}_s[D^{-1}(s)\delta h_X(s)\Gamma_s^{-1}Y_s^2I_{E_{\epsilon}}]^{\top}\\
&+\frac{1}{2}
(\tilde{\rho}_s^1+\tilde{\rho}_s^2)[D^{-1}(s)\bar{h}_{XX}(s)\Gamma_s^{-1}(Y_s^1+Y_s^2)\Gamma_s^{-1}(Y_s^1+Y_s^2)]^{\top}\\
&+\frac{1}{2}\bar{\tilde{\rho}}_s[D^{-1}(s)\bar{h}_{XX}(s)(\Gamma_s^{-1}Y_s^2\Gamma_s^{-1}Y_s^2+2\Gamma_s^{-1}Y_s^1\Gamma_s^{-1}Y_s^2)]^{\top}\bigg|^2ds\bigg)^{\frac{p}{2}}\bigg]\\
&=\mathbb{A}_1+\mathbb{A}_2+\mathbb{A}_3+\mathbb{A}_4+\mathbb{A}_5+\mathbb{A}_6+\mathbb{A}_7+\mathbb{A}_8+\mathbb{A}_9.
\end{aligned}
\end{equation*}
Now by H\"older's inequality and the assumption {\bf (H2)} we can obtain  the  estimates  
for each of them, respectively.  In fact,  we have 
\begin{equation*}
\begin{aligned}
&\mathbb{A}_1=\bar{\mathbb{E}}\bigg[\bigg(\int_0^T\bigg|(\bar{\tilde{\rho}}_s+\tilde{\rho}^1_s+\tilde{\rho}_s^2)\bigg[D^{-1}(s)\int_0^1h_X(s,\Gamma_s^{-1}(\bar{Y}_s+Y_s^1+Y_s^2)\\
&\quad+\theta\Gamma_s^{-1}(Y^{\epsilon}_s-\bar{Y}_s-Y_s^1-Y_s^2),u^{\epsilon}_s)d\theta\Gamma_s^{-1}(Y^{\epsilon}_s-\bar{Y}_s-Y_s^1-Y_s^2)\bigg]^{\top}\bigg|^2ds\bigg)^{\frac{p}{2}}\bigg]\\
&\leq \bar{\mathbb{E}}\bigg[\sup_{0\leq s\leq T}|\bar{\tilde{\rho}}_s+\tilde{\rho}^1_s+\tilde{\rho}_s^2|^p\sup_{0\leq s\leq T}|\Gamma_s^{-1}(Y^{\epsilon}_s-\bar{Y}_s-Y_s^1-Y_s^2)|^p\bigg(\int_0^T|D^{-1}(s)|^2\\
&\quad\times\bigg(\int_0^1|h_X(s,\Gamma_s^{-1}(\bar{Y}_s+Y_s^1+Y_s^2)+\theta\Gamma_s^{-1}(Y^{\epsilon}_s-\bar{Y}_s-Y_s^1-Y_s^2),u^{\epsilon}_s)|d\theta\bigg)^2ds\bigg)^{\frac{p}{2}}\bigg]\\
&\leq C\bigg(\bar{\mathbb{E}}\sup_{0\leq s\leq T}|\bar{\tilde{\rho}}_s+\tilde{\rho}^1_s+\tilde{\rho}_s^2|^{2p}\bigg)^{\frac{1}{2}}\bigg(\bar{\mathbb{E}}\sup_{0\leq s\leq T}|\Gamma_s^{-1}(Y^{\epsilon}_s-\bar{Y}_s-Y_s^1-Y_s^2)|^{2p}\bigg)^{\frac{1}{2}}\\
&\leq C(C+\epsilon^{\frac{p}{2}}+\epsilon^p)\bigg(\bar{\mathbb{E}}\sup_{0\leq s\leq T}|\Gamma_s^{-1}|^{4p}\bigg)^{\frac{1}{4}}\bigg(\bar{\mathbb{E}}\sup_{0\leq s\leq T}|Y^{\epsilon}_s-\bar{Y}_s-Y_s^1-Y_s^2|^{4p}\bigg)^{\frac{1}{4}}=o(\epsilon^p)\,. 
\end{aligned}
\end{equation*}
Next, we have 
\begin{equation*}
\begin{aligned}
\mathbb{A}_2&=\bar{\mathbb{E}}\bigg[\bigg(\int_0^T\bigg|(\bar{\tilde{\rho}}_s+\tilde{\rho}_s^1+\tilde{\rho}_s^2)\bigg[D^{-1}(s)\int_0^1\int_0^1\alpha [h_{XX}(s,\Gamma^{-1}_s\bar{Y}_s+\beta\alpha\Gamma^{-1}_s(Y^1_s+Y^2_s),u^{\epsilon}_s)\\
&\quad-\bar{h}_{XX}]d\beta d\alpha\Gamma_s^{-1}(Y_s^1+Y_s^2)\Gamma_s^{-1}(Y_s^1+Y_s^2)\bigg]^{\top}\bigg|^2ds\bigg)^{\frac{p}{2}}\bigg]\\
&\leq C\epsilon^{\frac{p}{2}}\bar{\mathbb{E}}\bigg[\sup_{0\leq s\leq T}|\bar{\tilde{\rho}}_s+\tilde{\rho}_s^1+\tilde{\rho}_s^2|^p\sup_{0\leq s\leq T}|\Gamma_s^{-1}(Y_s^1+Y_s^2)|^p\sup_{0\leq s\leq T}|\Gamma_s^{-1}(Y_s^1+Y_s^2)|^p\bigg]\\
&\leq C\epsilon^{\frac{p}{2}}\bigg(\bar{\mathbb{E}}\sup_{0\leq s\leq T}|\bar{\tilde{\rho}}_s+\tilde{\rho}_s^1+\tilde{\rho}_s^2|^{2p}\bigg)^{\frac{1}{2}}\bigg(\bar{\mathbb{E}}\sup_{0\leq s\leq T}|\Gamma_s^{-1}(Y_s^1+Y_s^2)|^{4p}\bigg)^{\frac{1}{2}}\leq C\epsilon^{\frac{3p}{2}}=o(\epsilon^p)\,. \\ 
\end{aligned}
\end{equation*}
We also have 

\begin{equation*}
\begin{aligned}
\mathbb{A}_3&=\bar{\mathbb{E}}\bigg[\bigg(\int_0^T\bigg|\tilde{\rho}^2_s[D^{-1}(s)\bar{h}_X(s)\Gamma_s^{-1}(Y_s^1+Y_s^2)]^{\top}\bigg|^2ds\bigg)^{\frac{p}{2}}\bigg]\\
&\leq \bar{\mathbb{E}}\bigg[\sup_{0\leq s\leq T}|\tilde{\rho}^2_s|^p\sup_{0\leq s \leq T}|\Gamma_s^{-1}(Y_s^1+Y_s^2)|^p\bigg(\int_0^T|D^{-1}(s)|^2|\bar{h}_X(s)|^2ds\bigg)^{\frac{p}{2}}\bigg]\\
&\leq C\bigg(\bar{\mathbb{E}}\sup_{0\leq s\leq T}|\tilde{\rho}^2_s|^{2p}\bigg)^{\frac{1}{2}}\bigg(\bar{\mathbb{E}}\sup_{0\leq s \leq T}|\Gamma_s^{-1}(Y_s^1+Y_s^2)|^{2p}\bigg)^{\frac{1}{2}}\leq C\epsilon^{\frac{3p}{2}}=o(\epsilon^p)\,. 
\end{aligned}
\end{equation*}
\begin{equation*}
\begin{aligned}
\mathbb{A}_4&=\bar{\mathbb{E}}\bigg[\bigg(\int_0^T\bigg|(\tilde{\rho}_s^1+\tilde{\rho}_s^2)[D^{-1}(s)\delta h_X(s)\Gamma_s^{-1}(Y_s^1+Y_s^2)I_{E_{\epsilon}}]^{\top}\bigg|^2ds\bigg)^{\frac{p}{2}}\bigg]\\
&\leq \bar{\mathbb{E}}\bigg[\bigg(\int_0^T|\tilde{\rho}_s^1+\tilde{\rho}_s^2|^4|D^{-1}(s)|^4|\Gamma_s^{-1}(Y_s^1+Y_s^2)|^4I_{E_{\epsilon}}ds\bigg)^{\frac{p}{4}}\bigg(\int_0^T|\delta h_X(s)|^4I_{E_{\epsilon}}ds\bigg)^{\frac{p}{4}}\bigg]\\
&\leq C\epsilon^{\frac{p}{2}}\bar{\mathbb{E}}\bigg[\sup_{0\leq s\leq T}|\tilde{\rho}_s^1+\tilde{\rho}_s^2|^p\sup_{0\leq s\leq T}|\Gamma_s^{-1}(Y_s^1+Y_s^2)|^p\bigg]\\
\end{aligned}
\end{equation*}
\begin{equation*}
\begin{aligned}
&\leq C\epsilon^{\frac{p}{2}}\bigg(\bar{\mathbb{E}}\sup_{0\leq s\leq T}|\tilde{\rho}_s^1+\tilde{\rho}_s^2|^{2p}\bigg)^{\frac{1}{2}}\bigg(\bar{\mathbb{E}}\sup_{0\leq s\leq T}|\Gamma_s^{-1}(Y_s^1+Y_s^2)|^{2p}\bigg)^{\frac{1}{2}}\leq C\epsilon^{\frac{3p}{2}}=o(\epsilon^p)\,. 
\end{aligned}
\end{equation*}

We continue to consider 
\begin{equation*}
\begin{aligned}
\mathbb{A}_5&=\bar{\mathbb{E}}\bigg[\bigg(\int_0^T\bigg|\tilde{\rho}^2_s[D^{-1}(s)\delta h(s)I_{E_{\epsilon}}]^{\top}\bigg|^2ds\bigg)^{\frac{p}{2}}\bigg]\\
&\leq \bar{\mathbb{E}}\bigg[\bigg(\int_0^T|\tilde{\rho}^2_s|^4|D^{-1}(s)|^4I_{E_{\epsilon}}ds\bigg)^{\frac{p}{4}}\bigg(\int_0^T|\delta h(s)|^4I_{E_{\epsilon}}ds\bigg)^{\frac{p}{4}}\bigg]\\
&\leq C\epsilon^{\frac{p}{2}}\bar{\mathbb{E}}\bigg[\sup_{0\leq s\leq T}|\tilde{\rho}^2_s|^p\bigg]\leq C\epsilon^{\frac{3p}{2}}=o(\epsilon^p),
\end{aligned}
\end{equation*}
and 
\begin{equation*}
\begin{aligned}
\mathbb{A}_6&=\bar{\mathbb{E}}\bigg[\bigg(\int_0^T\bigg|\tilde{\rho}^1_s[D^{-1}(s)\bar{h}_X(s)\Gamma_s^{-1}Y_s^2]^{\top}\bigg|^2ds\bigg)^{\frac{p}{2}}\bigg]\\
&\leq \bar{\mathbb{E}}\bigg[\sup_{0\leq s\leq T}|\tilde{\rho}^1_s|^p\sup_{0\leq s\leq T}|\Gamma_s^{-1}Y_s^2|^p\bigg(\int_0^T|D^{-1}(s)|^2|\bar{h}_X(s)|^2ds\bigg)^{\frac{p}{2}}\bigg]\\
&\leq C\bigg(\bar{\mathbb{E}}\sup_{0\leq s\leq T}|\tilde{\rho}^1_s|^{2p}\bigg)^{\frac{1}{2}}\bigg(\bar{\mathbb{E}}\sup_{0\leq s\leq T}|\Gamma_s^{-1}Y_s^2|^{2p}\bigg)^{\frac{1}{2}}\leq C\epsilon^{\frac{3p}{2}}=o(\epsilon^p),
\end{aligned}
\end{equation*}
\begin{equation*}
\begin{aligned}
\mathbb{A}_7&=\bar{\mathbb{E}}\bigg[\bigg(\int_0^T\bigg|\bar{\tilde{\rho}}_s[D^{-1}(s)\delta h_X(s)\Gamma_s^{-1}Y_s^2I_{E_{\epsilon}}]^{\top}\bigg|^2ds\bigg)^{\frac{p}{2}}\bigg]\\
&\leq \bar{\mathbb{E}}\bigg[\bigg(\int_0^T|\bar{\tilde{\rho}}_s|^4|D^{-1}(s)|^4|\Gamma_s^{-1}Y_s^2|^4I_{E_{\epsilon}}ds\bigg)^{\frac{p}{4}}\bigg(\int_0^T|\delta h_X(s)|^4I_{E_{\epsilon}}ds\bigg)^{\frac{p}{4}}\bigg]\\
&\leq C\epsilon^{\frac{p}{4}}\bar{\mathbb{E}}\bigg[\sup_{0\leq s\leq T}|\bar{\tilde{\rho}}_s|^p\sup_{0\leq s\leq T}|\Gamma_s^{-1}Y_s^2|^p\bigg(\int_0^T|D^{-1}(s)|^4I_{E_{\epsilon}}ds\bigg)^{\frac{p}{4}}\bigg]\\
&\leq C\epsilon^{\frac{p}{2}}\bigg(\bar{\mathbb{E}}\sup_{0\leq s\leq T}|\bar{\tilde{\rho}}_s|^{2p}\bigg)^{\frac{1}{2}}\bigg(\bar{\mathbb{E}}\sup_{0\leq s\leq T}|\Gamma_s^{-1}Y_s^2|^{2p}\bigg)^{\frac{1}{2}}\leq C\epsilon^{\frac{3p}{2}}=o(\epsilon^p).
\end{aligned}
\end{equation*}
As for the last two terms, we have 
\begin{equation*}
\begin{aligned}
\mathbb{A}_8&=\bar{\mathbb{E}}\bigg[\bigg(\int_0^T\bigg|(\tilde{\rho}_s^1+\tilde{\rho}_s^2)[D^{-1}(s)\bar{h}_{XX}(s)\Gamma_s^{-1}(Y_s^1+Y_s^2)\Gamma_s^{-1}(Y_s^1+Y_s^2)]^{\top}\bigg|^2ds\bigg)^{\frac{p}{2}}\bigg]\\
&\leq C\bar{\mathbb{E}}\bigg[\sup_{0\leq s\leq T}|\tilde{\rho}_s^1+\tilde{\rho}_s^2|^p\sup_{0\leq s\leq T}|\Gamma_s^{-1}(Y_s^1+Y_s^2)|^{2p}\bigg]\\
&\leq C\bigg(\bar{\mathbb{E}}\sup_{0\leq s\leq T}|\tilde{\rho}_s^1+\tilde{\rho}_s^2|^{2p}\bigg)^{\frac{1}{2}}\bigg(\bar{\mathbb{E}}\sup_{0\leq s\leq T}|\Gamma_s^{-1}(Y_s^1+Y_s^2)|^{4p}\bigg)^{\frac{1}{2}}\leq C\epsilon^{\frac{3p}{2}}=o(\epsilon^p),\\
\end{aligned}
\end{equation*}
and 
\begin{equation*}
\begin{aligned}
&\mathbb{A}_9=\bar{\mathbb{E}}\bigg[\bigg(\int_0^T\bigg|\bar{\tilde{\rho}}_s[D^{-1}(s)\bar{h}_{XX}(s)\Gamma_s^{-1}Y_s^1\Gamma_s^{-1}Y_s^2]^{\top}\bigg|^2ds\bigg)^{\frac{p}{2}}\bigg]\\
&\leq \bar{\mathbb{E}}\bigg[\sup_{0\leq s\leq T}|\bar{\tilde{\rho}}_s|^p\sup_{0\leq s\leq T}|\Gamma_s^{-1}Y_s^1|^p\sup_{0\leq s\leq T}|\Gamma_s^{-1}Y_s^2|^p\bigg(\int_0^T|D^{-1}(s)|^2|\bar{h}_{XX}(s)|^2ds\bigg)^{\frac{p}{2}}\bigg]\\
&\leq C\bigg(\bar{\mathbb{E}}\sup_{0\leq s\leq T}|\bar{\tilde{\rho}}_s|^{2p}\bigg)^{\frac{1}{2}}\bigg(\bar{\mathbb{E}}\sup_{0\leq s\leq T}|\Gamma_s^{-1}Y_s^1|^{4p}\bigg)^{\frac{1}{4}}\bigg(\bar{\mathbb{E}}\sup_{0\leq s\leq T}|\Gamma_s^{-1}Y_s^2|^{4p}\bigg)^{\frac{1}{4}}\leq C\epsilon^{\frac{3p}{2}}=o(\epsilon^p)\,. 
\end{aligned}
\end{equation*}
Similarly, we have 
\begin{equation*}
\begin{aligned}
&\bar{\mathbb{E}}\bigg[\bigg(\int_0^T\bigg|\bar{\tilde{\rho}}_s[D^{-1}(s)\bar{h}_{XX}(s)\Gamma_s^{-1}Y_s^2\Gamma_s^{-1}Y_s^2]^{\top}\bigg|^2ds\bigg)^{\frac{p}{2}}\bigg]\leq C\epsilon^{2p}=o(\epsilon^p),
\end{aligned}
\end{equation*}

Therefore, we have $\bar{\mathbb{E}}[\sup_{0\leq s\leq T}|\tilde{\rho}^{\epsilon}_s-\bar{\tilde{\rho}}_s-\tilde{\rho}_s^1-\tilde{\rho}_s^2|^p]=o(\epsilon^p)$.

\emph{Proof of Lemma \ref{Jestimate}}.  We can write 
\begin{equation*}
\begin{aligned}
&J(\bar{u})+\hat{J}:=\bar{\mathbb{E}}\bigg[\int_0^T\bigg\{(\bar{\tilde{\rho}}_t+\tilde{\rho}_t^1+\tilde{\rho}_t^2)\bar{f}(t)+\bar{\tilde{\rho}}_t\bigg[\langle \bar{f}_X(t),\Gamma_t^{-1}(Y_t^1+Y_t^2)\rangle\\
&+\frac{1}{2}\langle\bar{f}_{XX}(t)\Gamma_t^{-1}Y_t^1,\Gamma_t^{-1}Y_t^1\rangle+\delta f(t)I_{E_{\epsilon}}\bigg]+\tilde{\rho}_t^1\langle\bar{f}_X(t),\Gamma_t^{-1}Y_t^1\rangle\bigg\}dt\bigg]\\
&+\bar{\mathbb{E}}\bigg[\bar{\tilde{\rho}}_T\bigg[\langle\Phi_X(\Gamma_T^{-1}\bar{Y}_T),\Gamma_T^{-1}(Y_T^1+Y^2_T)\rangle+\frac{1}{2}\langle\Phi_{XX}(\Gamma_T^{-1}\bar{Y}_T)\Gamma_T^{-1}Y_T^1,\Gamma_T^{-1}Y_T^1\rangle\bigg]\\
&+(\bar{\tilde{\rho}}_T+\tilde{\rho}_T^1+\tilde{\rho}_T^2)\Phi(\Gamma_T^{-1}\bar{Y}_T)+\tilde{\rho}_T^1\langle\Phi_X(\Gamma_T^{-1}\bar{Y}_T),\Gamma_T^{-1}Y_T^1\rangle\bigg]\,. 
\end{aligned}
\end{equation*}
Then by the similar argument as in the proof of Lemma \ref{estimate2}, we obtain
\begin{equation*}
\begin{aligned}
&(\bar{\tilde{\rho}}_t+\tilde{\rho}_t^1+\tilde{\rho}_t^2)f(t,\Gamma_t^{-1}(\bar{Y}_t+Y_t^1+Y_t^2),u^{\epsilon}_t)-\bar{\tilde{\rho}}_t\bar{f}(t)\\
=&(\bar{\tilde{\rho}}_t+\tilde{\rho}_t^1+\tilde{\rho}_t^2)\bigg\langle\int_0^1\int_0^1\alpha f_{XX}(t,\Gamma_t^{-1}\bar{Y}_t+\beta\alpha\Gamma_t^{-1}(Y_t^1+Y_t^2),u^{\epsilon}_t)d\beta d\alpha\\
&\times\Gamma_t^{-1}(Y_t^1+Y_t^2),\Gamma_t^{-1}(Y_t^1+Y_t^2)\bigg\rangle+(\bar{\tilde{\rho}}_t+\tilde{\rho}_t^1+\tilde{\rho}_t^2)\\
&\times\langle f_X(t,\Gamma_t^{-1}\bar{Y}_t,u^{\epsilon}_t),\Gamma_t^{-1}(Y_t^1+Y_t^2)\rangle+(\bar{\tilde{\rho}}_t+\tilde{\rho}_t^1+\tilde{\rho}_t^2)\delta f(t)I_{E_{\epsilon}}+(\tilde{\rho}_t^1+\tilde{\rho}_t^2)\bar{f}(t),
\end{aligned}
\end{equation*}
and
\begin{equation*}
\begin{aligned}
&(\bar{\tilde{\rho}}_T+\tilde{\rho}_T^1+\tilde{\rho}_T^2)\Phi(\Gamma_T^{-1}(\bar{Y}_T+Y_T^1+Y_T^2))-\bar{\tilde{\rho}}_T\Phi(\Gamma_T^{-1}\bar{Y}_T)\\
=&(\bar{\tilde{\rho}}_T+\tilde{\rho}_T^1+\tilde{\rho}_T^2)\bigg\langle\int_0^1\int_0^1\alpha\Phi_{XX}(\Gamma_T^{-1}\bar{Y}_T+\beta\alpha\Gamma_T^{-1}(Y_T^1+Y_T^2))d\beta d\alpha\\
&\times\Gamma_T^{-1}(Y_T^1+Y_T^2),\Gamma_T^{-1}(Y_T^1+Y_T^2)\bigg\rangle+(\bar{\tilde{\rho}}_T+\tilde{\rho}_T^1+\tilde{\rho}_T^2)\\
&\times\langle\Phi_X(\Gamma_T^{-1}\bar{Y}_T),\Gamma_T^{-1}(Y_T^1+Y_T^2)\rangle+(\tilde{\rho}_T^1+\tilde{\rho}_T^2)\Phi(\Gamma_T^{-1}\bar{Y}_T).
\end{aligned}
\end{equation*}
Therefore, we have
\begin{equation*}
\begin{aligned}
&J(u^{\epsilon})-J(\bar{u})-\hat{J}=\bar{\mathbb{E}}\bigg[\int_0^T\bigg\{(\tilde{\rho}_t^{\epsilon}-\bar{\tilde{\rho}}_t-\tilde{\rho}_t^1-\tilde{\rho}_t^2)f(t,\Gamma_t^{-1}Y^{\epsilon}_t,u^{\epsilon}_t)+(\tilde{\rho}_t^1+\tilde{\rho}_t^2)\langle\bar{f}_X(t),\Gamma_t^{-1}Y^2_t\rangle\\
&+(\bar{\tilde{\rho}}_t+\tilde{\rho}^1_t+\tilde{\rho}^2_t)\langle\delta f_X(t),\Gamma_t^{-1}(Y^1_t+Y^2_t)\rangle I_{E_{\epsilon}}+(\bar{\tilde{\rho}}_t+\tilde{\rho}^1_t+\tilde{\rho}^2_t)\bigg\langle\int_0^1f_X(t,\Gamma_t^{-1}(\bar{Y}_t+Y^1_t+Y^2_t)\\
&+\theta\Gamma_t^{-1}(Y^{\epsilon}_t-\bar{Y}_t-Y^1_t-Y^2_t),u^{\epsilon}_t)d\theta,\Gamma_t^{-1}(Y^{\epsilon}_t-\bar{Y}_t-Y^1_t-Y^2_t)\bigg\rangle+(\tilde{\rho}_t^1+\tilde{\rho}_t^2)\delta f(t)I_{E_{\epsilon}}\\
&+(\bar{\tilde{\rho}}_t+\tilde{\rho}^1_t+\tilde{\rho}^2_t)\bigg\langle\int_0^1\int_0^1\alpha[f_{XX}(t,\Gamma_t^{-1}\bar{Y}_t+\beta\alpha\Gamma_t^{-1}(Y_t^1+Y_t^2),u^{\epsilon}_t)-\bar{f}_{XX}(t)]d\beta d\alpha\\
&\times\Gamma_t^{-1}(Y_t^1+Y_t^2),\Gamma_t^{-1}(Y_t^1+Y_t^2)\bigg\rangle+\frac{1}{2}(\bar{\tilde{\rho}}_t+\tilde{\rho}_t^1+\tilde{\rho}_t^2)\langle\bar{f}_{XX}(t)\Gamma_t^{-1}(Y_t^1+Y_t^2),\Gamma_t^{-1}Y_t^2\rangle\\
&+\tilde{\rho}_t^2\langle\bar{f}_X(t),\Gamma_t^{-1}Y^1_t\rangle+\frac{1}{2}(\tilde{\rho}_t^1+\tilde{\rho}_t^2)\langle\bar{f}_{XX}(t)\Gamma_t^{-1}Y^1_t,\Gamma_t^{-1}Y_t^1\rangle \bigg\}dt\bigg]\\
\end{aligned}
\end{equation*}
\begin{equation*}
\begin{aligned}
&+\bar{\mathbb{E}}\bigg[(\tilde{\rho}_T^{\epsilon}-\bar{\tilde{\rho}}_T-\tilde{\rho}_T^1-\tilde{\rho}_T^2)\Phi(\Gamma_T^{-1}Y_T^{\epsilon})+(\bar{\tilde{\rho}}_T+\tilde{\rho}_T^1+\tilde{\rho}_T^2)\bigg\langle\int_0^1\Phi_X(\Gamma_T^{-1}(\bar{Y}_T+Y^1_T+Y^2_T)\\
&+\theta\Gamma_T^{-1}(Y_T^{\epsilon}-\bar{Y}_T-Y^1_T-Y^2_T))d\theta,\Gamma_T^{-1}(Y_T^{\epsilon}-\bar{Y}_T-Y^1_T-Y^2_T)\bigg\rangle+\tilde{\rho}_T^1\langle\Phi_X(\Gamma_T^{-1}\bar{Y}_T),\Gamma_T^{-1}Y_T^2\rangle\\
&+\tilde{\rho}^2_T\langle\Phi_X(\Gamma_T^{-1}\bar{Y}_T),\Gamma_T^{-1}(Y_T^1+Y^2_T)\rangle+\frac{1}{2}(\tilde{\rho}_T^1+\tilde{\rho}_T^2)\langle\Phi_{XX}(\Gamma_T^{-1}\bar{Y}_T)\Gamma_T^{-1}Y_T^1,\Gamma_T^{-1}Y_T^1\rangle\\
&+(\bar{\tilde{\rho}}_T+\tilde{\rho}_T^1+\tilde{\rho}_T^2)\bigg\langle\int_0^1\int_0^1\alpha[\Phi_{XX}(\Gamma_T^{-1}\bar{Y}_T+\beta\alpha\Gamma_T^{-1}(Y_T^1+Y_T^2))-\Phi_{XX}(\Gamma_T^{-1}\bar{Y}_T)]d\beta d\alpha\\
&\times\Gamma_T^{-1}(Y_T^1+Y_T^2),\Gamma_T^{-1}(Y_T^1+Y_T^2)\bigg\rangle+\frac{1}{2}(\bar{\tilde{\rho}}_T+\tilde{\rho}_T^1+\tilde{\rho}_T^2)\langle\Phi_{XX}(\Gamma_T^{-1}\bar{Y}_T)\Gamma_T^{-1}(Y_T^1+Y_T^2),\Gamma_T^{-1}Y_T^2\rangle\bigg]\,. 
\end{aligned}
\end{equation*}
We can write the above right hand side as 
\begin{equation*}
\begin{aligned}
|J(u^{\epsilon})-J(\bar{u})-\hat{J}|^2\leq&\mathbb{D}_{1}+\mathbb{D}_{2}+\mathbb{D}_{3}+\mathbb{D}_{4}+\mathbb{D}_{5}+\mathbb{D}_{6}+\mathbb{D}_{7}+\mathbb{D}_{8}+\mathbb{D}_{9}+\mathbb{D}_{10}+\mathbb{D}_{11}\\
&\quad+\mathbb{D}_{12}+\mathbb{D}_{13}+\mathbb{D}_{14}+\mathbb{D}_{15}+\mathbb{D}_{16}\,, 
\end{aligned}
\end{equation*}
where, by H\"older's inequality and the assumption {\bf (H2)} 
\begin{equation*}
\begin{aligned}
\mathbb{D}_1&=\bar{\mathbb{E}}\bigg[\bigg|\int_0^T(\tilde{\rho}_t^{\epsilon}-\bar{\tilde{\rho}}_t-\tilde{\rho}_t^1-\tilde{\rho}_t^2)f(t,\Gamma_t^{-1}Y^{\epsilon}_t,u^{\epsilon}_t)dt\bigg|^2\bigg]\\
&\leq \bar{\mathbb{E}}\bigg[\sup_{0\leq t\leq T}|\tilde{\rho}_t^{\epsilon}-\bar{\tilde{\rho}}_t-\tilde{\rho}_t^1-\tilde{\rho}_t^2|^2\bigg|\int_0^Tf(t,\Gamma_t^{-1}Y^{\epsilon}_t,u^{\epsilon}_t)dt\bigg|^2\bigg]\\
&\leq \bigg(\bar{\mathbb{E}}\sup_{0\leq t\leq T}|\tilde{\rho}_t^{\epsilon}-\bar{\tilde{\rho}}_t-\tilde{\rho}_t^1-\tilde{\rho}_t^2|^4\bigg)^{\frac{1}{2}}\bigg(\bar{\mathbb{E}}\bigg|\int_0^Tf(t,\Gamma_t^{-1}Y^{\epsilon}_t,u^{\epsilon}_t)dt\bigg|^4\bigg)^{\frac{1}{2}}\\
&\leq o(\epsilon^2)\bigg(\bar{\mathbb{E}}\bigg(\int_0^TC(1+|\Gamma_t^{-1}Y^{\epsilon}_t|^2+|u^{\epsilon}_t|^2)dt\bigg)^4\bigg)^{\frac{1}{2}}=o(\epsilon^2)\,,\\
\end{aligned}
\end{equation*}
\begin{equation*}
\begin{aligned}
\mathbb{D}_2&=\bar{\mathbb{E}}\bigg[\bigg|\int_0^T(\tilde{\rho}_t^1+\tilde{\rho}_t^2)\langle\bar{f}_X(t),\Gamma_t^{-1}Y^2_t\rangle dt\bigg|^2\bigg]\\
&\leq\bar{\mathbb{E}}\bigg[\sup_{0\leq t\leq T}|\tilde{\rho}_t^1+\tilde{\rho}_t^2|^2\sup_{0\leq t\leq T}|\Gamma_t^{-1}Y^2_t|^2\bigg(\int_0^TC(1+|\Gamma_t^{-1}\bar{Y}_t|+|\bar{u}_t|)dt\bigg)^2\bigg]\\
&\leq \bigg(\bar{\mathbb{E}}\sup_{0\leq t\leq T}|\tilde{\rho}_t^1+\tilde{\rho}_t^2|^8\bigg)^{\frac{1}{4}}\bigg(\bar{\mathbb{E}}\sup_{0\leq t\leq T}|\Gamma_t^{-1}Y^2_t|^8\bigg)^{\frac{1}{4}}\bigg(\bar{\mathbb{E}}\bigg(\int_0^TC(1+|\Gamma_t^{-1}\bar{Y}_t|+|\bar{u}_t|)dt\bigg)^4\bigg)^{\frac{1}{2}}\\
&\leq C\epsilon\bigg(\bar{\mathbb{E}}\sup_{0\leq t\leq T}|\Gamma_t^{-1}|^{16}\bigg)^{\frac{1}{8}}\bigg(\bar{\mathbb{E}}\sup_{0\leq t\leq T}|Y^2_t|^{16}\bigg)^{\frac{1}{8}}\leq C\epsilon^3=o(\epsilon^2)\,. 
\end{aligned}
\end{equation*}
\begin{equation*}
\begin{aligned}
\mathbb{D}_3&=\bar{\mathbb{E}}\bigg[\bigg|\int_0^T(\bar{\tilde{\rho}}_t+\tilde{\rho}^1_t+\tilde{\rho}^2_t)\langle\delta f_X(t),\Gamma_t^{-1}(Y^1_t+Y^2_t)\rangle I_{E_{\epsilon}} dt\bigg|^2\bigg]\\
&\leq \bar{\mathbb{E}}\bigg[\bigg(\int_0^T|\bar{\tilde{\rho}}_t+\tilde{\rho}^1_t+\tilde{\rho}^2_t|^2|\Gamma_t^{-1}(Y^1_t+Y^2_t)|^2I_{E_{\epsilon}}dt\bigg)\bigg(\int_0^T|\delta f_X(t)|^2 I_{E_{\epsilon}} dt\bigg)\bigg]\\
&\leq \epsilon\bar{\mathbb{E}}\bigg[\sup_{0\leq t\leq T}|\bar{\tilde{\rho}}_t+\tilde{\rho}^1_t+\tilde{\rho}^2_t|^2\sup_{0\leq t\leq T}|\Gamma_t^{-1}(Y^1_t+Y^2_t)|^2\\
&\qquad\times\bigg(\int_0^TC(1+|\Gamma_t^{-1}\bar{Y}_t|^2+|\bar{u}_t|^2+|u_t|^2) I_{E_{\epsilon}} dt\bigg)\bigg]\\
\end{aligned}
\end{equation*}
\begin{equation*}
\begin{aligned}
&\leq \epsilon\bigg(\bar{\mathbb{E}}\sup_{0\leq t\leq T}|\bar{\tilde{\rho}}_t+\tilde{\rho}^1_t+\tilde{\rho}^2_t|^8\bigg)^{\frac{1}{4}}\bigg(\bar{\mathbb{E}}\sup_{0\leq t\leq T}|\Gamma_t^{-1}(Y^1_t+Y^2_t)|^8\bigg)^{\frac{1}{4}}\\
&\qquad\times\bigg(\bar{\mathbb{E}}\bigg(\int_0^TC(1+|\Gamma_t^{-1}\bar{Y}_t|^2+|\bar{u}_t|^2+|u_t|^2) I_{E_{\epsilon}} dt\bigg)^2\bigg)^{\frac{1}{2}}\\
&\leq C\epsilon^2\bigg(\bar{\mathbb{E}}\bigg(\int_0^TC(1+|\Gamma_t^{-1}\bar{Y}_t|^4+|\bar{u}_t|^4+|u_t|^4) I_{E_{\epsilon}} dt\bigg)\bigg(\int_0^TI_{E_{\epsilon}}dt\bigg)\bigg)^{\frac{1}{2}}\\
&\leq C\epsilon^{\frac{5}{2}}\bigg(\epsilon+\epsilon\bar{\mathbb{E}}\sup_{0\leq t\leq T}|\Gamma_t^{-1}\bar{Y}_t|^4+\epsilon\sup_{0\leq t\leq T}\bar{\mathbb{E}}(|\bar{u}_t|^4+|u_t|^4)\bigg)^{\frac{1}{2}}\leq C\epsilon^3=o(\epsilon^2)\,. 
\end{aligned}
\end{equation*}
\begin{equation*}
\begin{aligned}
&\mathbb{D}_4=\bar{\mathbb{E}}\bigg[\bigg|\int_0^T(\bar{\tilde{\rho}}_t+\tilde{\rho}^1_t+\tilde{\rho}^2_t)\bigg\langle\int_0^1f_X(t,\Gamma_t^{-1}(\bar{Y}_t+Y^1_t+Y^2_t)\\
&\quad+\theta\Gamma_t^{-1}(Y^{\epsilon}_t-\bar{Y}_t-Y^1_t-Y^2_t),u^{\epsilon}_t)d\theta,\Gamma_t^{-1}(Y^{\epsilon}_t-\bar{Y}_t-Y^1_t-Y^2_t)\bigg\rangle dt\bigg|^2\bigg]\\
&\leq \bar{\mathbb{E}}\bigg[\sup_{0\leq t\leq T}|\Gamma_t^{-1}(Y^{\epsilon}_t-\bar{Y}_t-Y^1_t-Y^2_t)|^2\bigg(\int_0^T|\bar{\tilde{\rho}}_t+\tilde{\rho}^1_t+\tilde{\rho}^2_t|\\
&\qquad\times\int_0^1|f_X(t,\Gamma_t^{-1}(\bar{Y}_t+Y^1_t+Y^2_t)+\theta\Gamma_t^{-1}(Y^{\epsilon}_t-\bar{Y}_t-Y^1_t-Y^2_t),u^{\epsilon}_t)|d\theta dt\bigg)^2\bigg]\\
&\leq \bigg(\bar{\mathbb{E}}\sup_{0\leq t\leq T}|\Gamma_t^{-1}(Y^{\epsilon}_t-\bar{Y}_t-Y^1_t-Y^2_t)|^4\bigg)^{\frac{1}{2}}\bigg(\bar{\mathbb{E}}\bigg(\int_0^T|\bar{\tilde{\rho}}_t+\tilde{\rho}^1_t+\tilde{\rho}^2_t|\\
&\qquad\times\int_0^1|f_X(t,\Gamma_t^{-1}(\bar{Y}_t+Y^1_t+Y^2_t)+\theta\Gamma_t^{-1}(Y^{\epsilon}_t-\bar{Y}_t-Y^1_t-Y^2_t),u^{\epsilon}_t)|d\theta dt\bigg)^4\bigg)^{\frac{1}{2}}\\
&\leq \bigg(\bar{\mathbb{E}}\sup_{0\leq t\leq T}|\Gamma_t^{-1}|^8\bigg)^{\frac{1}{4}}\bigg(\bar{\mathbb{E}}\sup_{0\leq t\leq T}|Y^{\epsilon}_t-\bar{Y}_t-Y^1_t-Y^2_t|^8\bigg)^{\frac{1}{4}}\bigg(\bar{\mathbb{E}}\bigg(\int_0^T|\bar{\tilde{\rho}}_t+\tilde{\rho}^1_t+\tilde{\rho}^2_t|\\
&\times\int_0^1|f_X(t,\Gamma_t^{-1}(\bar{Y}_t+Y^1_t+Y^2_t)+\theta\Gamma_t^{-1}(Y^{\epsilon}_t-\bar{Y}_t-Y^1_t-Y^2_t),u^{\epsilon}_t)|d\theta dt\bigg)^4\bigg)^{\frac{1}{2}}=o(\epsilon^2)\,. 
\end{aligned}
\end{equation*}
\begin{equation*}
\begin{aligned}
\mathbb{D}_5&=\bar{\mathbb{E}}\bigg[\bigg|\int_0^T(\bar{\tilde{\rho}}_t+\tilde{\rho}^1_t+\tilde{\rho}^2_t)\bigg\langle\int_0^1\int_0^1\alpha[f_{XX}(t,\Gamma_t^{-1}\bar{Y}_t+\beta\alpha\Gamma_t^{-1}(Y_t^1+Y_t^2),u^{\epsilon}_t)\\
&\qquad-\bar{f}_{XX}(t)] d\beta d\alpha\Gamma_t^{-1}(Y_t^1+Y_t^2),\Gamma_t^{-1}(Y_t^1+Y_t^2)\bigg\rangle dt\bigg|^2\bigg]\\
&\leq C\epsilon\bar{\mathbb{E}}\bigg[\int_0^T|\bar{\tilde{\rho}}_t+\tilde{\rho}^1_t+\tilde{\rho}^2_t|^2|\Gamma_t^{-1}(Y_t^1+Y_t^2)|^2|\Gamma_t^{-1}(Y_t^1+Y_t^2)|^2 dt\bigg]\\
&\leq C\epsilon\bar{\mathbb{E}}\bigg[\sup_{0\leq t\leq T}|\bar{\tilde{\rho}}_t+\tilde{\rho}^1_t+\tilde{\rho}^2_t|^2\sup_{0\leq t\leq T}|\Gamma_t^{-1}(Y_t^1+Y_t^2)|^4\bigg]\\
&\leq C\epsilon\bigg(\bar{\mathbb{E}}\sup_{0\leq t\leq T}|\bar{\tilde{\rho}}_t+\tilde{\rho}^1_t+\tilde{\rho}^2_t|^4\bigg)^{\frac{1}{2}}\bigg(\bar{\mathbb{E}}\sup_{0\leq t\leq T}|\Gamma_t^{-1}(Y_t^1+Y_t^2)|^8\bigg)^{\frac{1}{2}}\leq C\epsilon^3=o(\epsilon^2)\,.\\
\end{aligned}
\end{equation*}
\begin{equation*}
\begin{aligned}
\mathbb{D}_6&=\bar{\mathbb{E}}\bigg[\bigg|\int_0^T(\tilde{\rho}_t^1+\tilde{\rho}_t^2)\delta f(t)I_{E_{\epsilon}} dt\bigg|^2\bigg]\\
&\leq \bar{\mathbb{E}}\bigg[\bigg(\int_0^T|\tilde{\rho}_t^1+\tilde{\rho}_t^2|^2I_{E_{\epsilon}}dt\bigg)\bigg(\int_0^T|\delta f(t)|^2I_{E_{\epsilon}} dt\bigg)\bigg]\\
\end{aligned}
\end{equation*}
\begin{equation*}
\begin{aligned}
&\leq \epsilon\bar{\mathbb{E}}\bigg[\sup_{0\leq t\leq T}|\tilde{\rho}_t^1+\tilde{\rho}_t^2|^2\bigg(\int_0^TC(1+|\Gamma_t^{-1}\bar{Y}_t|^4+|\bar{u}_t|^4+|u_t|^4)I_{E_{\epsilon}} dt\bigg)\bigg] \\
&\leq \epsilon\bigg(\bar{\mathbb{E}}\sup_{0\leq t\leq T}|\tilde{\rho}_t^1+\tilde{\rho}_t^2|^4\bigg)^{\frac{1}{2}}\bigg(\bar{\mathbb{E}}\bigg(\int_0^TC(1+|\Gamma_t^{-1}\bar{Y}_t|^4+|\bar{u}_t|^4+|u_t|^4)I_{E_{\epsilon}} dt\bigg)^2\bigg)^{\frac{1}{2}}\\
&\leq C\epsilon^2\bigg(\bar{\mathbb{E}}\bigg(\int_0^T(1+|\Gamma_t^{-1}\bar{Y}_t|^8+|\bar{u}_t|^8+|u_t|^8)I_{E_{\epsilon}} dt\bigg)\bigg(\int_0^TI_{E_{\epsilon}}dt\bigg)\bigg)^{\frac{1}{2}}\\
&\leq C\epsilon^{\frac{5}{2}}\bigg(\epsilon+\epsilon\bar{\mathbb{E}}\sup_{0\leq t\leq T}|\Gamma_t^{-1}\bar{Y}_t|^8+\epsilon\sup_{0\leq t\leq T}\bar{\mathbb{E}}(|\bar{u}_t|^8+|u_t|^8)\bigg)^{\frac{1}{2}}\leq C\epsilon^3=o(\epsilon^2)\,. 
\end{aligned}
\end{equation*}
\begin{equation*}
\begin{aligned}
&\mathbb{D}_7=\bar{\mathbb{E}}\bigg[\bigg|\int_0^T(\bar{\tilde{\rho}}_t+\tilde{\rho}_t^1+\tilde{\rho}_t^2)\langle\bar{f}_{XX}(t)\Gamma_t^{-1}(Y_t^1+Y_t^2),\Gamma_t^{-1}Y_t^2\rangle dt\bigg|^2\bigg]\\
&\leq \bar{\mathbb{E}}\bigg[\sup_{0\leq t\leq T}|\bar{\tilde{\rho}}_t+\tilde{\rho}_t^1+\tilde{\rho}_t^2|^2\sup_{0\leq t\leq T}|\Gamma_t^{-1}(Y_t^1+Y_t^2)|^2\sup_{0\leq t\leq T}|\Gamma_t^{-1}Y_t^2|^2\bigg(\int_0^T|\bar{f}_{XX}(t)|dt\bigg)^2\bigg]\\
&\leq C\bigg(\bar{\mathbb{E}}\sup_{0\leq t\leq T}|\bar{\tilde{\rho}}_t+\tilde{\rho}_t^1+\tilde{\rho}_t^2|^4\bigg)^{\frac{1}{2}}\bigg(\bar{\mathbb{E}}\sup_{0\leq t\leq T}|\Gamma_t^{-1}(Y_t^1+Y_t^2)|^8\bigg)^{\frac{1}{4}}\bigg(\bar{\mathbb{E}}\sup_{0\leq t\leq T}|\Gamma_t^{-1}Y_t^2|^8\bigg)^{\frac{1}{4}}\\
&\leq C(C+\epsilon+\epsilon^2)\bigg(\bar{\mathbb{E}}\sup_{0\leq t\leq T}|Y_t^1+Y_t^2|^{16}\bigg)^{\frac{1}{8}}\bigg(\bar{\mathbb{E}}\sup_{0\leq t\leq T}|Y_t^2|^{16}\bigg)^{\frac{1}{8}}\leq C\epsilon^3=o(\epsilon^2)\,. 
\end{aligned}
\end{equation*}
\begin{equation*}
\begin{aligned}
\mathbb{D}_8&=\bar{\mathbb{E}}\bigg[\bigg|\int_0^T\tilde{\rho}_t^2\langle\bar{f}_X(t),\Gamma_t^{-1}Y^1_t\rangle dt\bigg|^2\bigg]\leq \bar{\mathbb{E}}\bigg[\sup_{0\leq t\leq T}|\tilde{\rho}_t^2|^2\sup_{0\leq t\leq T}|\Gamma_t^{-1}Y^1_t|^2\bigg(\int_0^T|\bar{f}_X(t)| dt\bigg)^2\bigg]\\
&\leq \bigg(\bar{\mathbb{E}}\sup_{0\leq t\leq T}|\tilde{\rho}_t^2|^8\bigg)^{\frac{1}{4}}\bigg(\bar{\mathbb{E}}\sup_{0\leq t\leq T}|\Gamma_t^{-1}Y^1_t|^8\bigg)^{\frac{1}{4}}\bigg(\bar{\mathbb{E}}\bigg(\int_0^T|\bar{f}_X(t)| dt\bigg)^4\bigg)^{\frac{1}{2}}\\
&\leq C\epsilon^2\bigg(\bar{\mathbb{E}}\sup_{0\leq t\leq T}|\Gamma_t^{-1}|^{16}\bigg)^{\frac{1}{8}}\bigg(\bar{\mathbb{E}}\sup_{0\leq t\leq T}|Y^1_t|^{16}\bigg)^{\frac{1}{8}}\leq C\epsilon^3=o(\epsilon^2)\,. 
\end{aligned}
\end{equation*}
\begin{equation*}
\begin{aligned}
\mathbb{D}_9&=\bar{\mathbb{E}}\bigg[\bigg|\int_0^T(\tilde{\rho}_t^1+\tilde{\rho}_t^2)\langle\bar{f}_{XX}(t)\Gamma_t^{-1}Y^1_t,\Gamma_t^{-1}Y_t^1\rangle dt\bigg|^2\bigg]\\
&\leq\bar{\mathbb{E}}\bigg[\sup_{0\leq t\leq T}|\tilde{\rho}_t^1+\tilde{\rho}_t^2|^2\sup_{0\leq t\leq T}|\Gamma_t^{-1}Y^1_t|^2\sup_{0\leq t\leq T}|\Gamma_t^{-1}Y^1_t|^2\bigg(\int_0^T|\bar{f}_{XX}(t)| dt\bigg)^2\bigg]\\
&\leq C\bigg(\bar{\mathbb{E}}\sup_{0\leq t\leq T}|\tilde{\rho}_t^1+\tilde{\rho}_t^2|^4\bigg)^{\frac{1}{2}}\bigg(\bar{\mathbb{E}}\sup_{0\leq t\leq T}|\Gamma_t^{-1}Y^1_t|^8\bigg)^{\frac{1}{2}}\leq C\epsilon^3=o(\epsilon^2)\,.\\
\end{aligned}
\end{equation*}
\begin{equation*}
\begin{aligned}
\mathbb{D}_{10}&=\bar{\mathbb{E}}[|(\tilde{\rho}_T^{\epsilon}-\bar{\tilde{\rho}}_T-\tilde{\rho}_T^1-\tilde{\rho}_T^2)\Phi(\Gamma_T^{-1}Y_T^{\epsilon})|^2]\\
&\leq \bar{\mathbb{E}}\bigg[\sup_{0\leq t\leq T}|\tilde{\rho}_t^{\epsilon}-\bar{\tilde{\rho}}_t-\tilde{\rho}_t^1-\tilde{\rho}_t^2|^2C(1+|\Gamma_T^{-1}Y_T^{\epsilon}|^4)\bigg]\\
&\leq C\bigg(\bar{\mathbb{E}}\sup_{0\leq t\leq T}|\tilde{\rho}_t^{\epsilon}-\bar{\tilde{\rho}}_t-\tilde{\rho}_t^1-\tilde{\rho}_t^2|^4\bigg)^{\frac{1}{2}}(\bar{\mathbb{E}}(1+|\Gamma_T^{-1}Y_T^{\epsilon}|^8))^{\frac{1}{2}}=o(\epsilon^2)\,. 
\end{aligned}
\end{equation*}
Similarly,
\begin{equation*}
\begin{aligned}
\mathbb{D}_{11}&=\bar{\mathbb{E}}\bigg[\bigg|(\bar{\tilde{\rho}}_T+\tilde{\rho}_T^1+\tilde{\rho}_T^2)\bigg\langle\int_0^1\Phi_X(\Gamma_T^{-1}(\bar{Y}_T+Y^1_T+Y^2_T)\\
&+\theta\Gamma_T^{-1}(Y_T^{\epsilon}-\bar{Y}_T-Y^1_T-Y^2_T))d\theta,\Gamma_T^{-1}(Y_T^{\epsilon}-\bar{Y}_T-Y^1_T-Y^2_T)\bigg\rangle\bigg|^2\bigg]=o(\epsilon^2)\,. 
\end{aligned}
\end{equation*}
\begin{equation*}
\begin{aligned}
&\mathbb{D}_{12}=\bar{\mathbb{E}}[|\tilde{\rho}^2_T\langle\Phi_X(\Gamma_T^{-1}\bar{Y}_T),\Gamma_T^{-1}(Y_T^1+Y^2_T)\rangle|^2]\\
&\leq\bar{\mathbb{E}}\bigg[\sup_{0\leq t\leq T}|\tilde{\rho}^2_t|^2|\Phi_X(\Gamma_T^{-1}\bar{Y}_T)|^2\sup_{0\leq t\leq T}|\Gamma_t^{-1}(Y_t^1+Y^2_t)|^2\bigg]\\
&\leq \bigg(\bar{\mathbb{E}}\sup_{0\leq t\leq T}|\tilde{\rho}^2_t|^8\bigg)^{\frac{1}{4}}(\bar{\mathbb{E}}|\Phi_X(\Gamma_T^{-1}\bar{Y}_T)|^4)^{\frac{1}{2}}\bigg(\bar{\mathbb{E}}\sup_{0\leq t\leq T}|\Gamma_t^{-1}(Y_t^1+Y^2_t)|^8\bigg)^{\frac{1}{4}}\leq C\epsilon^3=o(\epsilon^2)\,.\\
\end{aligned}
\end{equation*}
\begin{equation*}
\begin{aligned}
&\mathbb{D}_{13}=\bar{\mathbb{E}}\bigg[\bigg|(\bar{\tilde{\rho}}_T+\tilde{\rho}_T^1+\tilde{\rho}_T^2)\bigg\langle\int_0^1\int_0^1\alpha[\Phi_{XX}(\Gamma_T^{-1}\bar{Y}_T+\beta\alpha\Gamma_T^{-1}(Y_T^1+Y_T^2))\\
&\qquad-\Phi_{XX}(\Gamma_T^{-1}\bar{Y}_T)]d\beta d\alpha\Gamma_T^{-1}(Y_T^1+Y_T^2),\Gamma_T^{-1}(Y_T^1+Y_T^2)\bigg\rangle\bigg|^2\bigg]\\
&\leq \epsilon\bar{\mathbb{E}}\bigg[\sup_{0\leq t\leq T}|\bar{\tilde{\rho}}_t+\tilde{\rho}_t^1+\tilde{\rho}_t^2|^2\sup_{0\leq t\leq T}|\Gamma_t^{-1}(Y_t^1+Y_t^2)|^4\bigg]\\
&\leq \epsilon\bigg(\bar{\mathbb{E}}\sup_{0\leq t\leq T}|\bar{\tilde{\rho}}_t+\tilde{\rho}_t^1+\tilde{\rho}_t^2|^4\bigg)^{\frac{1}{2}}\bigg(\bar{\mathbb{E}}\sup_{0\leq t\leq T}|\Gamma_t^{-1}(Y_t^1+Y_t^2)|^8\bigg)^{\frac{1}{2}}\leq C\epsilon^3=o(\epsilon^2)\,.\\
\end{aligned}
\end{equation*}
\begin{equation*}
\begin{aligned}
\mathbb{D}_{14}&=\bar{\mathbb{E}}\bigg[\bigg|(\bar{\tilde{\rho}}_T+\tilde{\rho}_T^1+\tilde{\rho}_T^2)\langle\Phi_{XX}(\Gamma_T^{-1}\bar{Y}_T)\Gamma_T^{-1}(Y_T^1+Y_T^2),\Gamma_T^{-1}Y_T^2\rangle\bigg|^2\bigg]\\
&\leq C\bar{\mathbb{E}}\bigg[\sup_{0\leq t\leq T}|\bar{\tilde{\rho}}_t+\tilde{\rho}_t^1+\tilde{\rho}_t^2|^2\sup_{0\leq t\leq T}|\Gamma_t^{-1}(Y_t^1+Y_t^2)|^2\sup_{0\leq t\leq T}|\Gamma_t^{-1}Y_t^2|^2\bigg]\\
&\leq C\bigg(\bar{\mathbb{E}}\sup_{0\leq t\leq T}|\Gamma_t^{-1}(Y_t^1+Y_t^2)|^8\bigg)^{\frac{1}{4}}\bigg(\bar{\mathbb{E}}\sup_{0\leq t\leq T}|\Gamma_t^{-1}Y_t^2|^8\bigg)^{\frac{1}{4}}\leq C\epsilon^3=o(\epsilon^2)\,. 
\end{aligned}
\end{equation*}
\begin{equation*}
\begin{aligned}
\mathbb{D}_{15}&=\bar{\mathbb{E}}\bigg[\bigg|(\tilde{\rho}_T^1+\tilde{\rho}_T^2)\langle\Phi_{XX}(\Gamma_T^{-1}\bar{Y}_T)\Gamma_T^{-1}Y_T^1,\Gamma_T^{-1}Y_T^1\rangle\bigg|^2\bigg]\\
&\leq C\bar{\mathbb{E}}\bigg[\sup_{0\leq t\leq T}|\tilde{\rho}_t^1+\tilde{\rho}_t^2|^2\sup_{0\leq t\leq T}|\Gamma_t^{-1}Y_t^1|^2\sup_{0\leq t\leq T}|\Gamma_t^{-1}Y_t^1|^2\bigg]\\
&\leq C\bigg(\bar{\mathbb{E}}\sup_{0\leq t\leq T}|\tilde{\rho}_t^1+\tilde{\rho}_t^2|^4\bigg)^{\frac{1}{2}}\bigg(\bar{\mathbb{E}}\sup_{0\leq t\leq T}|\Gamma_t^{-1}Y_t^1|^8\bigg)^{\frac{1}{2}}\leq C\epsilon^3=o(\epsilon^2)\,. 
\end{aligned}
\end{equation*}
\begin{equation*}
\begin{aligned}
\mathbb{D}_{16}&=\bar{\mathbb{E}}\bigg[\bigg|\tilde{\rho}_T^1\langle\Phi_X(\Gamma_T^{-1}\bar{Y}_T),\Gamma_T^{-1}Y_T^2\rangle\bigg|^2\bigg]\\
&\leq C\bar{\mathbb{E}}\bigg[\sup_{0\leq t\leq T}|\tilde{\rho}_t^1|^2(1+|\Gamma_T^{-1}\bar{Y}_T|^2)\sup_{0\leq t\leq T}|\Gamma_t^{-1}Y_t^2|^2\bigg]\\
&\leq C\bigg(\bar{\mathbb{E}}\sup_{0\leq t\leq T}|\tilde{\rho}_t^1|^8\bigg)^{\frac{1}{4}}\bigg(\bar{\mathbb{E}}(1+|\Gamma_T^{-1}\bar{Y}_T|^4)\bigg)^{\frac{1}{2}}\bigg(\bar{\mathbb{E}}\sup_{0\leq t\leq T}|\Gamma_t^{-1}Y_t^2|^8\bigg)^{\frac{1}{4}}\\
&\leq C\epsilon^3=o(\epsilon^2).
\end{aligned}
\end{equation*}
The proof is complete.

\end{document}